\documentclass[10.9pt,a4paper]{amsart}
\usepackage[english]{babel}
\usepackage[normalem]{ulem}

\usepackage[square,sort,comma,numbers]{natbib}

\usepackage[all]{xy}
\usepackage{pstricks}
\usepackage{enumerate}
\usepackage{amsfonts,amssymb,amsmath,pinlabel,array,hhline}
\usepackage{slashed}
\usepackage{tabulary}
\usepackage{fancyhdr}
\usepackage{color}
\usepackage{a4wide}
\usepackage{bbm}
\usepackage[position=b]{subcaption}
\usepackage[colorlinks,
    linkcolor={blue!50!black},
    citecolor={blue!50!black},
    urlcolor={blue!80!black}]{hyperref}

    \usepackage{tikz}
\usetikzlibrary{
  cd,
  calc,
  positioning,
  fit,
  arrows,
  decorations.pathreplacing,
  decorations.markings,
  shapes.geometric,
  backgrounds,
  bending
}
\usepackage{tikzsymbols}

\usepackage{overpic}
\usepackage{todonotes}

\newtheorem{theorem}{Theorem}[section]

\newtheorem{question}{Question}
\newtheorem{question*}{Question}
\newtheorem*{lemma*}{Lemma}
\newtheorem{corollary}[theorem]{Corollary}
\newtheorem{lemma}[theorem]{Lemma}
\newtheorem{proposition}[theorem]{Proposition}

\newtheorem*{claim*}{Claim}
\theoremstyle{definition}
\newtheorem{definition}[theorem]{Definition}

\newtheorem*{notation*}{Notation}

\newtheorem{example}[theorem]{Example}

\newtheorem{remark}[theorem]{Remark}
\newtheorem*{remark*}{Remark}

\definecolor{bettergreen}{rgb}{0.0, 0.5, 0.0}

\newcommand{\N}{\mathbb{N}}
\newcommand{\Z}{\mathbb{Z}}
\newcommand{\Q}{\mathbb{Q}}

\newcommand{\D}{\mathbb{D}}
\newcommand{\F}{\mathbb{F}}

\newcommand{\bsm}{\left(\begin{smallmatrix}}
\newcommand{\esm}{\end{smallmatrix}\right)}

\newcommand{\Bl}{\operatorname{Bl}}

\newcommand{\lk}{\ell k}

\newcommand{\T}{\operatorname{T}}

\newcommand{\alg}{\mathit{alg}}

\DeclareSymbolFont{EulerScript}{U}{eus}{m}{n}
\DeclareSymbolFontAlphabet\mathscr{EulerScript}

\title{Algebraic concordance groups for links}

\author{Ga\"etan Simian}
\address{Ga\"etan Simian -- Section de math\'ematiques, Universit\'e de Gen\`eve, Suisse}
\email{gaetan.simian@unige.ch}

\date{}

\begin{document}

\begin{abstract}
We use families of generalized Seifert matrices to define algebraic concordance groups for links with non trivial Alexander polynomial. We then use these groups to recover the invariance by concordance of the signature, the Fox-Milnor condition for concordant links and the invariance by concordance of the Witt class of the Blanchfield pairing for links. Finally, we use these invariants to investigate the isomorphism types of these groups.
\end{abstract}

\maketitle


\section{Introduction}
\subsection{Historical background and motivation}

Two knots in $S^3$ are called \emph{topologically concordant} if they cobound a locally flat annulus in $S^3 \times [0,1]$. Concordance is an equivalence relation, and the set of concordance classes of knots forms a group, called the \emph{knot concordance group}, for the operation induced by connected sum. The neutral element is given by the class of \emph{slice} knots, i.e. knots that bound a (locally flat) embedded disc in $B^4$. This group was introduced in 1966 by Fox and Milnor \cite{FoxMilnor}. One way to study the knot concordance group is via its algebraic version, which is defined as follows. A Seifert matrix $A$ for a knot is called \emph{metabolic}, or equivalently \emph{algebraically slice} if it is congruent over $\Z$ to a block matrix of the form $\bsm 0 & C\\D & E\esm$, with $C$, $D$ and $E$ square matrices half the size of $A$. Two Seifert matrices $A$ and $B$ are called \emph{Witt equivalent}, or equivalently \emph{algebraically concordant} if the direct sum $A\oplus-B$ is metabolic. Witt equivalence is an equivalence relation on the set of Seifert matrices, and the set of Witt equivalence classes form a group, called the \emph{algebraic concordance group}, for the operation induced by the direct sum. The neutral element is given by the class of metabolic Seifert matrices.

In \cite{Lev69}, Levine showed that slice knots have algebraically slice Seifert matrices. This property ensures that associating to the concordance class of a knot the algebraic concordance class of its Seifert matrices yields a well defined homomorphism from the knot concordance group onto the algebraic concordance group. Therefore, any invariant of the algebraic concordance group defines an invariant of knot concordance. These are called \emph{algebraic concordance invariants of knots}, and knots with algebraically concordant Seifert matrices are called algebraically concordant. In the early days of knot concordance, all concordance invariants were algebraic concordance invariant. Among these, let us mention the Levine-Tristram signature \cite{Lev69, Tri69}, studied by Murasugi \cite{Mur69}, the Arf invariant \cite{Rob65}, the Fox-Milnor condition on the Alexander polynomial \cite{FoxMilnor}, the Blanchfield pairing \cite{Bla57}, and invariants coming from Witt groups of finite fields. By combining these invariants, Levine showed that the algebraic concordance group is isomorphic to $\Z^\infty \oplus \Z_2^\infty \oplus \Z_4^\infty$ \cite{LevInvKnotCob}.

Let us mention that, in higher dimensions, i.e. knotted spheres $S^n$ in $S^{n+2}$ for $n >1$, this completely classifies concordance of knots. Indeed, Levine \cite{LevInvKnotCob} showed that in this case, the homomorphism associating to the concordance class of a knot the algebraically concordance class of its Seifert matrix is an isomorphism. However, this does not hold $n=1$. Indeed, the existence of elements in the kernel, i.e. algebraically slice knots which are not slice was established in 1976 by Casson and Gordon \cite{CassonGordonSlice, CassonGordonCobordism}.

Concordance of links is defined in a similar manner : two links in $S^3$ are \emph{topologically concordant} if they cobound disjoint locally flat annuli properly embedded in $S^3 \times [0,1]$, each intersecting $S^3 \times \{0\}$ and $S^3 \times \{1\}$. A link is called \emph{slice} if it bounds disjoint locally flat embedded discs in $B^4$. Even though concordance is an equivalence relation on the set of links, there is no natural group structure on the set of equivalence classes. This is because for more than one component, the connected sum operation depends on the choice of a component, and therefore does not yield a well defined operation. However, analogues of Seifert matrices do exist for links. These are called \emph{generalized Seifert matrices}, introduced by \cite{Coo82, Cim04}, and can be used to compute several link invariants, such as the multivariable Levine-Tristram signature \cite{C-F08}, the multivariable Alexander-Conway polynomial \cite{Cim04}, or the multivariable Blanchfield pairing \cite{CFT18, ConwayBlanchfield}.

Event though the analogue of Seifert matrices for links is well defined, there is so far no construction of an algebraic concordance group for links. The present work is an attempt to fill this gap. Using generalized Seifert matrices of links with non trivial Alexander polynomial, we construct algebraic concordance groups for links, and use them successfully to recover known results such as the invariance by concordance of the signature \cite{C-F08, CNT}, the Fox-Milnor condition \cite{Kawauchi} and the invariance by concordance of the Witt class of the Blanchfield form, see e.g. \cite{hillman2012}.

\subsection{Paper outline and statement of the results}

Throughout this note, we work in the setting of colored links. A \emph{$\mu$-colored link} is an oriented link together with a surjective map assigning to each component a color in $\{1, \ldots, \mu\}$. This framework is a natural setting which allows to consider all at once oriented links, corresponding to the case $\mu =1$, and ordered links, which correspond to the case where $\mu$ equals the number of components. All the necessary background on colored links and generalized Seifert matrices necessary for this note is contained in Section \ref{sec:Background}.

Section \ref{sec:AlgConcLinks} is our main section, devoted to the construction of algebraic concordance groups for $\mu$-colored links. Our idea is to mimic the construction of the classical algebraic concordance group to links, using generalized Seifert matrices. There are three main differences compared to knots. First, generalized Seifert matrices for $\mu$-colored links come in families of $2^\mu$ matrices, depending on a \emph{sign} $\varepsilon \in \{\pm\}^\mu$. Even though in the case of oriented links one of the two Seifert matrices determines the whole pair (via transposition), this is not the case for $\mu \geq 2$. Hence, we consider families of matrices to define algebraic concordance. More precisely, we consider the set $\mathcal{M}_\mu$ of all families of integral matrices $\{A^\varepsilon\}_{\varepsilon \in \{\pm\}^\mu}$ of the same size, satisfying $A^{-\varepsilon} = (A^\varepsilon)^{\T}$ for all $\varepsilon$.

The second difference is that if $A$ is a Seifert matrix for a knot, the matrix $A-A^{\T}$ is non degenerate (in fact non singular). This property is key in proving that Witt equivalence is an equivalence relation. Analogues of this property do not always hold for families of generalized Seifert matrices, as illustrated in Example \ref{example:LinkDegenerateFamily}. To circumvent this issue, we restrict to families of matrices with \emph{non trivial Alexander polynomial}. In the following definition, $\Lambda := \Z[t_1^{\pm1}, \ldots, t_\mu^{\pm1}]$ denotes the ring of $\mu$-variable Laurent polynomials.
\begin{definition}
\label{def:MAP}
For a family of integral matrices $A = \{A^\varepsilon\}_{\varepsilon \in \{\pm\}^\mu}$ in $\mathcal{M}_\mu$ its \emph{Alexander polynomial} is the $\mu$-variable Laurent polynomial $\Delta_A(t_1, \ldots, t_\mu) \in \Lambda$, defined as the determinant of the matrix
$$A(t_1, \ldots, t_\mu) := \sum_{\varepsilon \in \{\pm\}^\mu}\varepsilon_1 \ldots \varepsilon_\mu t_1^{\frac{1-\varepsilon_1}{2}} \ldots t_\mu^{\frac{1-\varepsilon_\mu}{2}}A^\varepsilon.$$
We say that the family $A$ has \emph{non trivial Alexander polynomial} if $\Delta_A(t_1, \ldots, t_\mu)$ is not the zero polynomial.
\end{definition}

We write $\mathcal{M}_\mu^\ast \subset \mathcal{M}_\mu$ for the subset of families of matrices having non trivial Alexander polynomial. To motivate this terminology, observe that \cite{Cim04} (see also \cite[Corollary 3.6]{C-F08}) implies that if $A$ is a family of generalized Seifert matrices for a link $L$, then $\Delta_A(t)$ is, up to multiplication by powers of $(t_i-1)$, the Alexander polynomial of $L$.

\begin{remark}
\label{rk:IntroAP}
In the case $\mu = 1$, the set $\mathcal{M}_1^{\ast}$ contains all pairs of Seifert matrices coming from knots. Indeed, any knot has non trivial Alexander polynomial.
\end{remark}

On the set $\mathcal{M}_\mu^\ast$, we define an equivalence relation, called \emph{algebraic concordance}, starting with the definition of Witt equivalent families of matrices. We say that two families of matrices $A = \{A^\varepsilon\}_{\varepsilon \in \{\pm\}^\mu}$ and $B = \{B^\varepsilon\}_{\varepsilon \in \{\pm\}^\mu}$ in $\mathcal{M}_\mu$ are \emph{congruent} if there exists a unimodular integral matrix $C$ such that for all $\varepsilon \in \{\pm\}^\mu$, $CA^\varepsilon C^{\T} = B^\varepsilon$. We emphasize the fact that $C$ does not depend on $\varepsilon$. 

\begin{definition}
\label{def:Metab}
A family $A \in \mathcal{M}_\mu$ of integral square matrices is \emph{metabolic} if it is congruent to a family of matrices of the shape
$$\begin{pmatrix} 0&E^\varepsilon\\F^\varepsilon&G^\varepsilon\end{pmatrix},$$
with $E^\varepsilon$, $F^\varepsilon$ and $G^\varepsilon$ square integer matrices. In particular, the matrices of $A$ have to be of even size, and $E^\varepsilon$, $F^\varepsilon$ and $G^\varepsilon$ half that size.
\end{definition}

\begin{remark}
\label{rk:MetabolicKnot}
For $\mu = 1$, the set $\mathcal{M}_1$ consists of pairs of matrices $\{A, A^{\T}\}$. Moreover, two families $\{A, A^{\T}\}$ and $\{B, B^{\T}\}$ are congruent if and only if the matrices $A$ and $B$ are congruent as $CAC^{\T} = B$ implies $CA^{\T}C^{\T} = B^{\T}$. Therefore, a pair of matrices $\{A, A^{\T}\}$ is metabolic if and only if the matrix $A$ is congruent to one that has a half rank block of zeros in the top left corner. This is, up to some non degeneracy hypothesis, the classical definition of metabolicity, see for example \cite[Definition 2.5]{LivingstonKnotConcordance}.
\end{remark}

To define Witt equivalence of families of matrices, we introduce two notations. For a family of matrices $A = \{A_\varepsilon\}_{\varepsilon \in \{\pm\}^\mu}$ in $\mathcal{M}_\mu$, the family $-A \in \mathcal{M}_\mu$ is defined as $\{-A^\varepsilon\}_{\varepsilon \in \{\pm\}^\mu}$. For a family $B = \{B^\varepsilon\}_{\varepsilon \in \{\pm\}^\mu}$, the \emph{direct sum} of $A$ and $B$ is $A\oplus B := \{A^\varepsilon \oplus B^\varepsilon\}_{\varepsilon \in \{\pm\}^\mu}$.

\begin{definition}
\label{def:Witt}
Two families of matrices $A$ and $B$ of $\mathcal{M}_\mu^\ast$ are called \emph{Witt equivalent} if the direct sum $A\oplus -B$ is metabolic.
\end{definition}

\begin{remark}
\label{rk:WittKnots}
\begin{enumerate}[(i)]
\item As a metabolic family necessarily consists of matrices of even size, matrices of Witt equivalent families must have size of the same parity.
\item In the case $\mu = 1$, by Remark \ref{rk:MetabolicKnot}, two pairs of matrices $\{A, A^{\T}\}$ and $\{B, B^{\T}\}$ are Witt equivalent if and only if $A\oplus -B$ is a metabolic matrix, which is the classical definition of Witt equivalence of Seifert matrices, see \cite[Section 1]{Lev69} or \cite[Definition 3.4.3]{LivingstonNaik}
\end{enumerate}
\end{remark}

The third and last difference is that in the case of knots, any two Seifert surfaces for the same knot give Witt equivalent Seifert matrices. Equivalently, this means that S-equivalence implies Witt equivalence. This fails for $\mu \geq 2$, as illustrated in Example \ref{example:Hopf}. Therefore, to get a notion of algebraic concordance which only depends on links rather than C-complexes, we are forced to enlarge the equivalence relation by including generalized S-equivalence. Postponing the technical definition of generalized S-equivalence to Section \ref{sec:SEquivalence} (see Definition \ref{def:GSEquivalence}), we define algebraic concordance of families of matrices.

\begin{definition}
Two families of matrices $A$ and $B$ in $\mathcal{M}_\mu^\ast$ are \emph{algebraically concordant} if there exists a sequence of families of matrices $A = A_0, A_1, \ldots, A_n = B$, each having non trivial Alexander polynomial, and such that for all $0 \leq i < n$, $A_i$ and $A_{i+1}$ are either generalized S-equivalent, or Witt equivalent.
\end{definition}

\begin{remark}
\label{rk:AlgConcKnots1}
In the case $\mu = 1$, we recover the classical notion of algebraic concordance for Seifert matrices. Indeed, two pairs of Seifert matrices of knots $\{A, A^{\T}\}$ and $\{B, B^{\T}\}$ lie in $\mathcal{M}_1^{\ast}$ by Remark~\ref{rk:IntroAP}. By definition these are algebraically concordant as pairs if and only if they are related by a sequence of Witt equivalence and generalized S-equivalence. As $\mu = 1$, generalized S-equivalence implies Witt equivalence, and it follows that $\{A, A^{\T}\}$ and $\{B, B^{\T}\}$ are algebraically concordant pairs if and only they are Witt equivalent pairs. By Remark \ref{rk:WittKnots}, they are Witt equivalent pairs if and only if $A$ and $B$ are Witt equivalent matrices, which is the classical definition of algebraically concordant Seifert matrices.
\end{remark}

Using this notion, we state our main result.

\begin{theorem}
\label{thm:IntroAlgConcLinks}
For any positive integer $\mu$, algebraic concordance is an equivalence relation on the set of families of matrices $\mathcal{M}_\mu^\ast$. Moreover, direct sum yields a well defined abelian group structure on the quotient.
\end{theorem}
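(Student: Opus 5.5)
Since algebraic concordance is by definition the equivalence relation generated by generalized S-equivalence and Witt equivalence on $\mathcal{M}_\mu^\ast$, the first claim is essentially formal: reflexivity, symmetry and transitivity of the chain relation follow once each generating relation is reflexive and symmetric. Reflexivity is the key point: I will show that $A\oplus -A$ is metabolic for every $A\in\mathcal{M}_\mu$. The change of basis $(x,y)\mapsto(x,x+y)$, given by $C=\bsm I&0\\I&I\esm$ and applied simultaneously for all $\varepsilon$, turns $A^\varepsilon\oplus -A^\varepsilon$ into $\bsm A^\varepsilon & A^\varepsilon\\ A^\varepsilon & 0\esm$. Reordering the two halves then puts the zero block in the top left. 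Symmetry of Witt equivalence uses the same swap of summands, together with the observation that $-M$ is metabolic when $M$ is. Generalized S-equivalence is an equivalence relation by its definition in Section~\ref{sec:SEquivalence}. So algebraic concordance is an equivalence relation.

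For the group structure, I will show that direct sum descends to the quotient. I must check three things.
\begin{enumerate}[(a)]
\item $\mathcal{M}_\mu^\ast$ is closed under $\oplus$. This holds because $\Delta_{A\oplus B}=\Delta_A\Delta_B$ and $\Lambda$ is a domain.
\item If $A$ and $A'$ are Witt equivalent, then so are $A\oplus B$ and $A'\oplus B$. Since $B\oplus-B$ is metabolic, $(A\oplus B)\oplus-(A'\oplus B)$ is congruent to $(A\oplus-A')\oplus(B\oplus-B)$, a direct sum of metabolic families. A direct sum of metabolic families is metabolic after permuting the basis so that the two zero blocks merge.
\item If $A$ and $A'$ are generalized S-equivalent, then so are $A\oplus B$ and $A'\oplus B$. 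This should hold because the elementary enlargements in Definition~\ref{def:GSEquivalence} are local block operations, and they commute with adding a block $B$ after a permutation congruence.
\end{enumerate}
Applying (b) and (c) along a chain, and inserting $B$ on each side, gives well-definedness. Associativity and commutativity hold up to permutation congruence, which is a special case of Witt equivalence (in fact congruent families are Witt equivalent by the reflexivity argument composed with $C$). The neutral element is the class of the empty family, or of any metabolic family in $\mathcal{M}_\mu^\ast$. The inverse of $[A]$ is $[-A]$ by reflexivity; note that $-A\in\mathcal{M}_\mu^\ast$ because $\Delta_{-A}=\pm\Delta_A$.

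The main obstacle is that a metabolic $M\in\mathcal{M}^\ast_\mu$ must be shown to be algebraically concordant to the empty family, i.e.\ that the metabolic families really form the identity class. It is also in transitivity in general, since the classical proof that Witt equivalence is transitive requires cancelling metabolic summands, namely showing that if $A\oplus M$ is metabolic with $M$ metabolic, then $A$ is metabolic. Here the chain definition sidesteps transitivity, but I still need $A\oplus M\sim A$ for metabolic $M$. The plan is to show that $A$ and $A\oplus M$ are Witt equivalent. Consider $A\oplus M\oplus -A$. Using the Lagrangian $\{(x,x)\}$ of $A\oplus -A$ together with the zero block of $M$ gives a candidate half-rank subspace on which all forms vanish. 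I will check that it is a direct summand of the lattice, which follows since the diagonal is a direct summand. This yields metabolicity without any cancellation lemma, and the nondegeneracy coming from $\Delta\neq0$ is used only for closure in $\mathcal{M}^\ast_\mu$. If a cancellation statement is needed elsewhere, I would attack it by working rationally, using $\Delta_A\neq0$ to ensure that $A(t)$ is nondegenerate over the fraction field of $\Lambda$.
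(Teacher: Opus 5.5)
Your proof is correct, but it takes a leaner route than the paper.

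\textbf{The paper's route.} It obtains the equivalence relation from Proposition~\ref{prop:WittEquiv}, that is, from transitivity of Witt equivalence itself. That is the substantial step: it needs the Witt cancellation Lemma~\ref{lemma:WittCancellation}, with non-degeneracy supplied by Lemma~\ref{lemma:APNon0}.

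\textbf{Your route.} You observe that algebraic concordance is defined by chains. Only symmetry of the two generating relations is then needed, and transitivity of Witt equivalence never enters. Well-definedness of $\oplus$ follows by adding $B$ to each link of a chain, using (a)--(c). The inverse comes from the explicit congruence $\bsm I&0\\ I&I\esm$.

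\textbf{What each buys.} Your argument makes the group axioms purely formal. It also shows that $\Delta\neq 0$ is used only to make $\mathcal{M}_\mu^\ast$ closed under $\oplus$ and $-$. Its real role is to keep the quotient and the invariants of Section~\ref{sec:AlgConcInvariants} meaningful. On all of $\mathcal{M}_\mu$, every even-size $A$ would be trivial, since $A$ is Witt equivalent to $A\oplus 0_{2k}$, which is metabolic once $2k$ is at least the size of $A$. The paper's route proves the stronger statement that Witt equivalence alone is an equivalence relation on non-degenerate families, which this theorem does not require.

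Your symmetry step, via $B\oplus -A\cong -(A\oplus -B)$ and the fact that negation preserves metabolicity, is more accurate than the paper's one-line claim. The paper asserts that these two families are congruent, which fails for $A=(1)$ and $B$ empty.

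\textbf{Two small points about your write-up.}
\begin{enumerate}
\item The ``main obstacle'' you flag is not one. A metabolic $M$ is Witt equivalent to the empty family $E$ directly, since $M\oplus -E=M$. So $A\oplus M\sim A\oplus E=A$ already follows from your step (b). Your metabolizer $\{(x,h,x)\}$ is correct but redundant.
\item Reflexivity of the chain relation is trivial (take a chain of length zero). Metabolicity of $A\oplus -A$ is really needed for inverses, not for reflexivity.
\end{enumerate}
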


Using Theorem \ref{thm:IntroAlgConcLinks}, we formulate our main definition.

\begin{definition}
\label{def:AlgConcGroup}
We call the group of Theorem \ref{thm:IntroAlgConcLinks} the \emph{$\mu$-colored algebraic concordance group}, denoted by $\mathcal{C}_\mu^{\alg}.$
\end{definition}

\begin{remark}
\label{rk:AlgConcGroupMu=1}
We emphasize the fact the the group $\mathcal{C}_1^\alg$ does not coincide with the classical algebraic concordance group of knots. Indeed, the group $\mathcal{C}_1^\alg$ contains the classes of Seifert matrices of all oriented links with arbitrary number of components and non trivial Alexander polynomial. In particular, it contains classes of pairs of matrices of odd size. On the other hand, the classical algebraic concordance group only contains the classes of Seifert matrices of knots, an thus only classes of pairs of matrices of even size. 
As the parity of the size of a pair of matrices is preserved by S-equivalence and Witt equivalence, it is an invariant of the algebraic concordance class of a pair of matrices. Thus, the two groups do not coincide.
\end{remark}

Of course, the original motivation for the introduction of the classical algebraic concordance group is the fact that concordant knots have algebraically concordant Seifert matrices. Our construction satisfies the analogue for colored links, at least in the smooth category. This is the content of the following theorem, which is proved in Section \ref{sub:AlgebraicConcordanceOfLinks}.

\begin{theorem}
\label{prop:IntroAlgConcLinks}
Smoothly concordant colored links with non trivial Alexander polynomial have algebraically concordant families of generalized Seifert matrices.
\end{theorem}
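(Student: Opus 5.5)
Given smoothly concordant $\mu$-colored links $L_0$ and $L_1$, both with non trivial Alexander polynomial, the plan is to reduce to a statement about sliceness of a single auxiliary link, as in Levine's argument for knots. Choose C-complexes $S_0$ for $L_0$ and $S_1$ for $L_1$, with families of generalized Seifert matrices $A$ and $B$. Since any two C-complexes for the same colored link have generalized S-equivalent families (this is the content of Section~\ref{sec:SEquivalence}), and generalized S-equivalence is built into algebraic concordance, it suffices to prove the statement for one convenient choice of C-complex on each side. It therefore suffices to show that $A\oplus -B$ is metabolic for some such choice.

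\textbf{Step 1: set up the auxiliary complex.} Take the split union $L_0 \sqcup -\overline{L_1}$, i.e. $L_0$ together with the mirror of $L_1$ with reversed orientation. For each color $i$, band together the two color-$i$ sublinks along a band. The resulting colored link $J$ bounds a C-complex $S$ obtained from $S_0 \sqcup -\overline{S_1}$ by adding these bands. One checks that its family of generalized Seifert matrices is $A\oplus -B$, up to extra rows and columns coming from the bands. Choose the bands so that they add no new first homology; for example, band only one component of each color, or absorb the extra classes by a generalized S-equivalence. The concordance annuli, together with the bands crossed with the interval, show that each color-$i$ sublink of $J$ bounds a surface in $B^4$, and these surfaces are disjoint for distinct colors. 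A cleaner alternative is to glue $S_0$, $S_1$ and the concordance directly: the concordance annuli $\Sigma\subset S^3\times[0,1]$ union $S_0\times\{0\}$ and $S_1\times\{1\}$ form a closed "C-complex-like" object $F=S_0\cup\Sigma\cup -S_1$ in $S^4$ or $S^3\times I$.

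\textbf{Step 2: extend to a 3-dimensional C-complex.} Using smoothness, extend the closed surfaces $F_i = S_{0,i}\cup\Sigma_i\cup -S_{1,i}$ to compact oriented 3-manifolds $M_i\subset S^3\times[0,1]$. Each $M_i$ is obtained as the preimage of a regular value of a map to $S^1$ extending the one defined near $F_i$. The $M_i$ should intersect transversally in a way that extends the clasp structure, forming a "3-dimensional C-complex". This is where smoothness enters, through transversality and the Pontryagin–Thom construction.

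\textbf{Step 3: the half-rank isotropic lattice.} Consider the kernel $K$ of $H_1(S_0)\oplus H_1(S_1)\to H_1(M)$, where $M=\bigcup M_i$. The standard linking-number argument shows that the generalized Seifert forms of every sign $\varepsilon$ vanish on $K$. Indeed, a class in $K$ bounds a 2-chain in $M$, and its $\varepsilon$-pushoff bounds a disjoint pushed 2-chain off $M$, so all $2^\mu$ linking numbers are zero. Moreover, a single basis change, independent of $\varepsilon$, puts all the matrices in block form, as required by Definition~\ref{def:Metab}.

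\textbf{Step 4: rank count and saturation.} We must show that $K$ is a direct summand of half rank. Saturation is automatic for a kernel into a torsion-free quotient, once torsion is handled by taking the kernel over $\mathbb{Q}$ intersected with the lattice. For half rank, the classical proof uses "half lives, half dies" for $\partial M$, together with non-degeneracy of $A-A^{\T}$. Here the replacement is the hypothesis $\Delta\neq 0$. It makes the form $A(t)$ non-degenerate over $\mathbb{Q}(t_1,\dots,t_\mu)$, and the isotropic subspace $K\otimes\mathbb{Q}(t)$ then has dimension at most half. The reverse inequality comes from the duality count for the boundary of the 3-dimensional C-complex, i.e. a half lives, half dies argument adapted to the intersection structure.

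I expect the main obstacle to be the rank count for a C-complex, because $\partial M$ is not a closed surface and the clasps complicate the Lefschetz duality. As a first attempt, I would compute Euler characteristics and use Mayer–Vietoris over the pieces $M_i$ glued along their intersections. If that is too messy, I would argue instead in the Witt group over $\mathbb{Q}(t)$: $A(t)\oplus -B(t)$ admits an isotropic subspace of half dimension, which together with non-degeneracy gives the needed rank bound.
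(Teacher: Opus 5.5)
There is a genuine gap, and it is structural rather than the technical rank count you flag. Nothing in Steps 2--4 uses a particular choice of $S_0$ and $S_1$: you say a convenient choice would suffice, but you never make one. So if those steps worked, they would prove that $A\oplus -B$ is metabolic for \emph{every} pair of C-complexes of concordant links. That is false. Take the product concordance from the $2$-colored Hopf link to itself, with $S_0$ the C-complex of Figure~\ref{fig:Counterexample} and $S_1$ the contractible one. The Hopf link has $\Delta=1\neq 0$, yet by Example~\ref{example:Hopf} the family $A\oplus -E=A$ is not metabolic. Hence the ``3-dimensional C-complex'' of Step~2, the linking argument of Step~3 and the half-lives-half-dies count of Step~4 cannot all hold in the generality you need.

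Geometrically, transverse 3-manifolds $M_i,M_j$ in a 4-manifold meet along surfaces. These surfaces may be closed, or of ribbon type, with boundary only on $\partial M_j$. In a genuine C-complex, the $\varepsilon$-pushoff of a cycle through a clasp exists because at each end of the clasp arc one can detour around the boundary of one of the two surfaces. Across such intersection surfaces no detour is available, so $\lk(x^\varepsilon,y)=c_x^\varepsilon\cdot c_y=0$ is not justified. For $\mu\geq 3$, triple intersections are generically curves, so condition (3) of Definition~\ref{def:C-cplx} cannot be arranged by transversality. Moreover, $\partial M$ is not a closed manifold, so there is no duality giving half rank. This is the same phenomenon that keeps generalized S-equivalence from implying Witt equivalence when $\mu\geq 2$.

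The missing idea is to stop looking for a direct Witt equivalence between $A$ and $B$ and to let generalized S-equivalence do part of the work. The paper uses smoothness only through Morse theory. Reordering the critical points of the concordance (minima, then saddles, then maxima) produces a middle-level link $L_{1/2}$, obtained from both $L_0$ and $L_1$ by band-summing an unlink.

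From $S_0$ one builds a C-complex for $L_{1/2}$ by capping the unlink with discs, attaching the bands, and resolving band intersections as in Figure~\ref{fig:ResolvingBandIntersections}. The new classes $x_i$ pair trivially, for all $\varepsilon$, with each other and with $H_1(S_0)$. This gives an explicit $\varepsilon$-independent metabolizer for $A_{1/2}\oplus -A$. The same construction from $S_1$ gives $A'_{1/2}\sim_{\mathit{Witt}} B$. The two families for $L_{1/2}$ are then related by Lemma~\ref{lemma:SEquivalence}, yielding $A\sim_{\mathit{Witt}}A_{1/2}\sim_S A'_{1/2}\sim_{\mathit{Witt}}B$. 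This zig-zag is exactly why algebraic concordance is defined as it is.

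The zig-zag also needs an ingredient absent from your outline, Lemma~\ref{lemma:A1/2APNon0}: $\Delta_{A_{1/2}}(t)\,\ddot=\,f(t)f(t^{-1})\Delta_A(t)$ with $|f(1,\ldots,1)|=1$. This guarantees that the intermediate family lies in $\mathcal{M}_\mu^\ast$.
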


Note that the notion of \emph{smooth} concordance might be unsettling for the reader used to Seifert matrices and abelian invariants. Indeed, Seifert matrices are classically considered in the topological category, yielding topological concordance invariants. However, our actual proof heavily relies on the smooth setting and all our attempts to prove an analogue of Theorem~\ref{prop:IntroAlgConcLinks} in the topological category have been unsuccessful. In particular, we were not able to adapt the classical proof from \cite[Chapter 8]{Lickorish}, details are given in \cite{GSThesis}.

In Section \ref{sec:AlgConcInvariants}, we turn toward the extension of several known concordance invariants of colored links to the groups $\mathcal{C}_\mu^\alg$. More precisely, in Section \ref{sub:Signature}, we show that the multivariable signature from \cite{C-F08} gives a invariant of the $\mu$-colored algebraic concordance group. Using this invariant, we obtain a three dimensional smooth proof of the following result, classically obtained in the topological category via four dimensional considerations \cite{C-F08, CNT}. 

\begin{corollary}
\label{cor:Signature}
If $L$ and $L'$ are smoothly concordant colored links, then for any $\omega \in (S^1 \setminus \{1\})^\mu$ which is not a root of the Alexander polynomials $\Delta_L(t)$ and $\Delta_{L'}(t)$, the signatures $\sigma_L(\omega)$ and $\sigma_{L'}(\omega)$ are equal.
\end{corollary}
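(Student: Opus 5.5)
The plan is to deduce Corollary~\ref{cor:Signature} from Theorem~\ref{prop:IntroAlgConcLinks}, by showing that the multivariable signature, read off from a family of generalized Seifert matrices, is an invariant of algebraic concordance. For $A \in \mathcal{M}_\mu^\ast$ and $\omega \in (S^1\setminus\{1\})^\mu$ I would set
$$H_A(\omega) := \prod_{i=1}^\mu (1-\overline{\omega_i})\, A(\omega_1,\ldots,\omega_\mu),$$
which is Hermitian because $A^{-\varepsilon}=(A^\varepsilon)^{\T}$. I would then define $\sigma_A(\omega)$ as its signature. By \cite{C-F08}, if $A$ comes from a C-complex for $L$, then $\sigma_A(\omega)=\sigma_L(\omega)$. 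Also by \cite{Cim04}, $\det H_A(\omega)$ is a nonzero multiple of $\Delta_L(\omega)$ up to the factors $(\omega_i-1)\neq 0$. So for $\omega$ not a root of $\Delta_L$, the form $H_A(\omega)$ is non-degenerate. The steps are as follows.

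\textbf{Step 1 (S-equivalence).} Generalized S-equivalence preserves $\sigma_A(\omega)$ at every $\omega$ with $\omega_i\neq 1$. Congruence by a unimodular $C$ gives $C H_A(\omega) C^{\T}$, so the signature is unchanged. For an elementary enlargement, the new rows and columns make $H_A(\omega)$ congruent over $\mathbb{C}$ to $H_{A'}(\omega)\oplus\bsm 0 & z\\ \bar z & *\esm$ with $z\neq 0$; this uses $\omega_i\neq 1$. That hyperbolic block has signature $0$. I would check this against Definition~\ref{def:GSEquivalence}; it is the same computation as in \cite{C-F08}.

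\textbf{Step 2 (Witt equivalence).} If $A\oplus -B$ is metabolic, then $H_{A\oplus -B}(\omega)=H_A(\omega)\oplus -H_B(\omega)$ is congruent to a matrix with an $n\times n$ zero block in the top-left corner, where the total size is $2n$. If this Hermitian matrix is non-degenerate, it has a Lagrangian and hence signature $0$, so $\sigma_A(\omega)=\sigma_B(\omega)$. Non-degeneracy holds when $\Delta_A(\omega)\Delta_B(\omega)\neq 0$.

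\textbf{Step 3 (chaining).} Given smoothly concordant $L,L'$, Theorem~\ref{prop:IntroAlgConcLinks} yields a chain $A=A_0,\ldots,A_n=B$ in $\mathcal{M}_\mu^\ast$. Step~2 needs $\omega$ to avoid the roots of every intermediate $\Delta_{A_i}$, not only of $\Delta_L$ and $\Delta_{L'}$. I expect this to be the main obstacle, and I would handle it by continuity. Each $\Delta_{A_i}$ is a nonzero polynomial, so its zero set on the torus has empty interior. Hence the set $U$ of $\omega\in(S^1\setminus\{1\})^\mu$ avoiding all roots of all $\Delta_{A_i}$ is dense. On $U$, Steps~1 and~2 give $\sigma_{A_0}=\sigma_{A_n}$.

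Now fix $\omega$ with $\Delta_L(\omega)\Delta_{L'}(\omega)\neq0$. The matrices $H_{A_0}$ and $H_{A_n}$ are non-degenerate at $\omega$, and hence also on a neighbourhood of $\omega$. There the signatures are locally constant, as the eigenvalues vary continuously and never cross $0$. This neighbourhood meets $U$, so $\sigma_L(\omega)=\sigma_{A_0}(\omega)=\sigma_{A_n}(\omega)=\sigma_{L'}(\omega)$.

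One subtlety remains: the relation between $\Delta_{A}$ and $\Delta_L$ holds only up to powers of $(t_i-1)$, which do not vanish on $(S^1\setminus\{1\})^\mu$. So the non-degeneracy needed at $\omega$ follows from the hypothesis on $\Delta_L,\Delta_{L'}$.
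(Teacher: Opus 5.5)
Your proof is correct and has the same skeleton as the paper's. Theorem \ref{prop:AlgConcLinks} supplies a chain of generalized S-equivalences and Witt equivalences. The signature is invariant under generalized S-equivalence at every $\omega\in(S^1\setminus\{1\})^\mu$ by \cite{C-F08}, together with \cite{DMO21} for (T4). Your Step 1 is slightly too narrow here: (T3) and (T4) are not elementary enlargements, and they are handled by congruences over $\Lambda_S$. The last ingredient is that a non-degenerate Hermitian matrix with a half-dimensional isotropic subspace has signature zero.

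Where you differ is Step 3, and there your version is better than the paper's. The paper's proof of Proposition \ref{prop:SignatureAlgConcInvariant} reduces to a single Witt equivalence whose two ends are non-degenerate at $\omega$. This tacitly assumes that every intermediate family in the chain has non-vanishing Alexander polynomial at $\omega$, which need not hold. Take the chain $A$, $A_{1/2}$, $A'_{1/2}$, $A'$ built in the proof of Theorem \ref{prop:AlgConcLinks}. Lemma \ref{lemma:A1/2APNon0} gives $\Delta_{A_{1/2}}(t)\,\ddot{=}\,f(t)f(t^{-1})\Delta_A(t)$, and $f$ may vanish at $\omega$ even when $\Delta_A(\omega)\neq 0$. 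At such a point, metabolicity of $A\oplus -A_{1/2}$ only yields $|\sigma_A(\omega)-\sigma_{A_{1/2}}(\omega)|\leq \eta_{A_{1/2}}(\omega)$, the nullity.

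Your argument closes exactly this gap. You get equality on the dense open set avoiding the zeros of the finitely many nonzero $\Delta_{A_i}$. You then use local constancy of $\sigma_{A_0}$ and $\sigma_{A_n}$ near any $\omega$ where both are non-degenerate. This in fact proves Proposition \ref{prop:SignatureAlgConcInvariant} for arbitrary chains.

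One small remark is worth adding. The existence of an $\omega$ with $\Delta_L(\omega)\Delta_{L'}(\omega)\neq 0$ forces both polynomials to be nonzero, which is what licenses invoking the theorem. Otherwise the statement is vacuous.
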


In the following Section \ref{sub:FoxMilnor}, we establish a Fox-Milnor condition for the Alexander polynomials of algebraically concordant families of matrices. Applying this condition to colored links, we recover a weaker version of \cite[Theorem B]{Kawauchi}. As in the case of the signature, our proof is smooth and three dimensional, while the one of Kawauchi uses homological techniques and holds in the topological category (see also \cite{NakagawaAP} for a proof using Fox calculus). To state this result, recall that $\Lambda := \Z[t_1^{\pm1}, \ldots, t_\mu^{\pm1}]$ denotes the ring of $\mu$-variable Laurent polynomials. We write $\Lambda_S$ for its localization with respect to the multiplicative set generated by the elements $(t_1-1),\ldots, (t_\mu-1)$.

\begin{corollary}
\label{cor:FoxMilnor}
For smoothly concordant colored links $L$ and $L'$, there exists Laurent polynomials $f, g \in \Lambda$, satisfying $\vert f(1, \ldots, 1) \vert  = \vert g(1, \ldots, 1) \vert = 1$, such that the polynomials $\Delta_L(t) f(t)f(t^{-1})$ and $\Delta_{L'}(t) g(t)g(t^{-1})$ are equal up to multiplications by units of $\Lambda_S$.
\end{corollary}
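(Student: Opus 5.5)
Corollary~\ref{cor:FoxMilnor} will follow from Theorem~\ref{prop:IntroAlgConcLinks} together with a purely algebraic Fox--Milnor statement for families of matrices; the corollary only needs to be transported from matrices to links at the end.

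\textbf{Step 1 (metabolic families).} First we show that if $M\in\mathcal{M}_\mu^\ast$ is metabolic, then $\Delta_M(t)=\pm t^{k} f(t)f(t^{-1})$, where $f$ is a Laurent polynomial with $|f(1,\ldots,1)|=1$ up to powers of $(t_i-1)$. After congruence, which changes $\Delta_M$ only by $\det(C)^2=1$, each $M^\varepsilon$ has the block form $\bsm 0 & E^\varepsilon\\ F^\varepsilon & G^\varepsilon\esm$. Then $M(t)$ has a zero top-left block, so
\[
\Delta_M(t)=\pm\det E(t)\det F(t), \qquad E(t):=\sum_\varepsilon \varepsilon_1\cdots\varepsilon_\mu\, t^{\frac{1-\varepsilon}{2}}E^\varepsilon .
\]
The relation $M^{-\varepsilon}=(M^\varepsilon)^{\T}$ gives $F^{-\varepsilon}=(E^\varepsilon)^{\T}$. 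Hence $F(t)=\pm t_1\cdots t_\mu\,E(t^{-1})^{\T}$, and so $\Delta_M(t)=\pm \prod_i t_i^{n}\, f(t)f(t^{-1})$ with $f=\det E(t)$.

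For the normalization at $(1,\ldots,1)$, I would not evaluate $f$ directly, since $E(1,\ldots,1)$ can be degenerate. Instead, factor out the maximal powers of $(t_i-1)$ dividing $f$; these are units in $\Lambda_S$. This is the main obstacle: after removing these factors we must still get $|f(1)|=1$. I expect it to follow from the analogous property of $\Delta_M$ (Torres-type conditions), or else it should be stated as a hypothesis to check. Failing that, I would prove that every family in the image satisfies $\Delta(1)=\pm1$ up to $\Lambda_S$-units, by reducing to $\mu=1$ via the substitution $t_i=t$.

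\textbf{Step 2 (invariance).} Next we check that generalized S-equivalence changes $\Delta_A$ only by a unit of $\Lambda_S$. For this, use the fact recalled above that $\Delta_A$ equals the Alexander polynomial of the link up to powers of $(t_i-1)$, or verify it directly on elementary enlargements, where the added rows contribute a factor $\pm t^a(t_i-1)$.

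For Witt equivalence, $A\oplus -B$ is metabolic, so by Step 1
\[
\Delta_A(t)\Delta_{-B}(t)=u\, f(t)f(t^{-1}),
\]
where $u$ is a unit of $\Lambda_S$. Note that $\Delta_{-B}=\pm\Delta_B$. Multiplying both sides by $\Delta_B(t)$ shows that $\Delta_A\cdot f'\bar f'$ and $\Delta_B\cdot g'\bar g'$ agree up to units, with $f'=\Delta_B$ symmetrized and $g'=f$. Since $\Delta_B(t^{-1})\doteq\Delta_B(t)$ up to units, this uses symmetry of the Alexander polynomial for such families, which should be checkable from the transposition relation. Chaining these equivalences along a sequence $A_0,\ldots,A_n$ multiplies the norms: products of polynomials of the form $h\bar h$ with $|h(1)|=1$ remain of that form.

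\textbf{Step 3 (conclusion).} Apply Theorem~\ref{prop:IntroAlgConcLinks} to the concordant links $L$ and $L'$. Then combine with the identification of $\Delta_A$ with $\Delta_L$ up to $\Lambda_S$-units, which gives the corollary.
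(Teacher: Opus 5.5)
There is a genuine gap, and it is exactly the step you flagged as the ``main obstacle'': the normalization $|f(1,\ldots,1)|=1$ cannot come from algebraic concordance of families. So Theorem~\ref{prop:IntroAlgConcLinks} used as a black box, together with your Steps 1--2, cannot prove the corollary.

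Concretely, the claim of Step 1 is false. The family $M=\{\bsm 0&2\\0&0\esm,\bsm 0&0\\2&0\esm\}\in\mathcal{M}_1^\ast$ is metabolic, and $\Delta_M(t)=4t$. Here $E(t)=2$, so there are no factors $(t-1)$ to strip. Units $\pm t^k\prod_i(t_i-1)^{n_i}$ of $\Lambda_S$ are primitive, so by multiplicativity of contents (Gauss's lemma) any $f$ with $4t\,\ddot=\,f(t)f(t^{-1})$ has content $2$. Hence $f(1)$ is even, even after removing powers of $(t-1)$.

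Step 2 fails for the same reason. Taking $f'=\Delta_B$ requires $|\Delta_B(1,\ldots,1)|=1$ after removing $(t_i-1)$ factors, and this is false in general. Restricting to realizable families does not rescue it. The intermediate families of an algebraic concordance chain are arbitrary elements of $\mathcal{M}_\mu^\ast$. Even link families violate it: the $2$-colored $(2,4)$ torus link has Alexander polynomial $1+t_1t_2$, with value $2$ at $(1,1)$.

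The relation in the corollary is simply not an invariant of $\mathcal{C}_\mu^{\alg}$, as the remark following the paper's proof points out. Applying Step 1 to the geometric metabolic family $A_{1/2}\oplus -A$ from the proof of Theorem~\ref{prop:AlgConcLinks} does not help either. Its metabolizer block is $\bsm C&0\\ \ast&-A\esm$, so your $f=\det E(t)$ contains $\Delta_A$ as a factor and is not normalized.

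What is missing is geometric input from the specific concordance. The paper never passes through a metabolic family. It uses the explicit chain $A\sim_{\mathit{Witt}}A_{1/2}\sim_S A'_{1/2}\sim_{\mathit{Witt}}A'$ through the band-summed link $L_{1/2}$.

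\begin{itemize}
\item The block shape $A_{1/2}(t)=\bsm 0&C(t)&0\\ D(t)&\ast&\ast\\ 0&\ast&A(t)\esm$ gives directly $\Delta_{A_{1/2}}\,\dot=\,\det C(t)\det C(t^{-1})\,\Delta_A$.
\item Lemma~\ref{lemma:A1/2APNon0} then shows $\det C(t)\,\dot=\,f(t)\prod_i(t_i-1)^{n_i}$ with $|f(1,\ldots,1)|=1$. This uses the explicit linking numbers $\lk(x_i,y_j^\varepsilon)$ of the curves created when resolving band intersections. Each row of $C(t)$ is divisible by a product of factors $(1-t_l)$ determined by the colors at that intersection. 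After dividing these out, the reduced matrix at $(1,\ldots,1)$ is diagonal with entries $\pm1$.
\item The same construction starting from $L'$ gives $g$, and S-equivalence of $A_{1/2}$ and $A'_{1/2}$ finishes the proof.
\end{itemize}

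Your multiplicative chaining in Step 2 is sound. Without this geometric input, however, it only yields the weaker Fox--Milnor statement with no condition on $f$ and $g$.
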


In Section \ref{sub:Blanchfield}, using results of \cite{CFT18}, we define Blanchfield pairings for families of matrices with non trivial Alexander polynomial. We show that this construction yields an algebraic concordance invariant, thereby recovering a result previously known, see e.g. \cite[Theorem 2.4]{hillman2012}. Again, our proof is mostly three dimensional and smooth, while the one of Hillman is four dimensional and holds in the topological category.

\begin{corollary}
\label{cor:Blanchfield}
Suppose that $L$ and $L'$ are smoothly concordant colored links with non trivial Alexander polynomials. Then the Blanchfield pairings $\Bl_L$ and $\Bl_L'$ over $\Lambda_S$ are Witt equivalent.
\end{corollary}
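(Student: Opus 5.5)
We derive Corollary~\ref{cor:Blanchfield} from Theorem~\ref{prop:IntroAlgConcLinks} by showing that the Witt class of the Blanchfield pairing over $\Lambda_S$ is an invariant of the group $\mathcal{C}_\mu^\alg$. The plan has three steps.

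\textbf{Step 1: the pairing of a family.} For a family $A \in \mathcal{M}_\mu^\ast$, set $H(A) := \Lambda_S^n / A(t)^{\T}\Lambda_S^n$. Since $\Delta_A \neq 0$, this is a torsion module. On it define
$$\Bl_A([x],[y]) := (1-t_1)\cdots(1-t_\mu)\, \overline{x}^{\T} A(t)^{-1} y \in Q/\Lambda_S,$$
where $Q$ is the fraction field and the normalising unit is the one used in \cite{CFT18}. By \cite{CFT18}, if $A$ is a family of generalized Seifert matrices for $L$, then $\Bl_A$ is isometric to $\Bl_L$ over $\Lambda_S$. It is also a hermitian pairing, using $A^{-\varepsilon} = (A^\varepsilon)^{\T}$, which gives $\overline{A(t)}^{\T} = \pm\, (\text{unit})\, A(t)$. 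One point to check is non-degeneracy. It holds because $A(t)$ becomes invertible over $Q$, and the standard linking-form argument for a presentation matrix with nonzero determinant gives a non-singular form on the torsion module.

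\textbf{Step 2: the Witt class is invariant.} There are three checks.

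\begin{enumerate}[(i)]
\item Congruence and direct sum behave as expected: $\Bl_{CAC^{\T}} \cong \Bl_A$, $\Bl_{A\oplus B} = \Bl_A \oplus \Bl_B$ and $\Bl_{-A} = -\Bl_A$.
\item Generalized S-equivalence preserves the isometry class of $\Bl_A$. This follows from \cite{CFT18}, or directly: the elementary enlargement adds a block that is invertible over $\Lambda_S$, since the new entries involve $t_i - 1$. This is exactly why we localize at $S$.
\item Metabolic families give metabolic pairings. If $A^\varepsilon = \bsm 0 & E^\varepsilon \\ F^\varepsilon & G^\varepsilon \esm$, then $A(t) = \bsm 0 & E(t) \\ F(t) & G(t) \esm$. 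Because $\det A(t) = \pm \det E(t)\det F(t) \neq 0$, both $E(t)$ and $F(t)$ are invertible over $Q$. Take the submodule $P \subset H(A)$ generated by the images of the first $n$ basis vectors. A block computation of $A(t)^{-1}$ shows its top-left block is $0$, so $P$ is isotropic. To get $P = P^\perp$, I would compare orders: $|P|$ should be $\det F(t)$ up to units and $|H(A)| = \det A(t)$, and then use the identity $|P|\,|P^\perp| = |H|$ for non-singular forms. The remaining part is to show $\operatorname{ord} P \cdot \operatorname{ord} P = \operatorname{ord} H$, presumably using $\overline{\det E} \doteq \det F$, which comes from the symmetry $A^{-\varepsilon} = (A^\varepsilon)^{\T}$.
\end{enumerate}

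Combining these, Witt equivalent families give Witt equivalent pairings. So $A \mapsto [\Bl_A]$ descends to a homomorphism from $\mathcal{C}_\mu^\alg$ to the Witt group of linking forms over $\Lambda_S$.

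\textbf{Step 3: conclusion.} Take smoothly concordant $L$ and $L'$ with Seifert families $A$ and $A'$. Theorem~\ref{prop:IntroAlgConcLinks} makes $A$ and $A'$ algebraically concordant, so $[\Bl_L] = [\Bl_A] = [\Bl_{A'}] = [\Bl_{L'}]$.

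The main obstacle is step (iii) of Step 2: proving $P = P^\perp$, or at least that the pairing is metabolic. $\Lambda_S$ is not a PID, so order and counting arguments need care. If the direct approach fails, I would first try a fallback: show $\Bl_A$ is Witt-trivial by exhibiting the lagrangian as the kernel of $H(A) \to \Lambda_S^{n}/F(t)^{\T}\Lambda_S^n$ and verifying $P^\perp \subseteq P$ by explicit block inversion.
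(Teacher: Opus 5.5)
Your overall architecture matches the paper's. The paper proves Proposition \ref{prop:InvarianceBlanchfield}: the Witt class of $\Bl_A$ is unchanged by generalized S-equivalence and by Witt equivalence of families. It then combines this with Theorem \ref{prop:AlgConcLinks} and the identification $\Bl_A\cong\Bl_L$ from \cite{CFT18}.

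The only real divergence is step (iii), which the paper does not prove by hand. It observes that $(H_A\oplus -H_{A'})^{-1}$ again has a half-size zero block up to congruence, and cites \cite[Appendix C]{FriedlThesis}. Your direct attempt at (iii) does not work as written. Writing $A(t)=\bsm 0&E\\F&G\esm$ (everything evaluated at $t$),
\[
A(t)^{-1}=\begin{pmatrix}-F^{-1}GE^{-1}&F^{-1}\\ E^{-1}&0\end{pmatrix},
\]
so the zero block is in the bottom-right corner, not the top-left. The classes of the first $n$ generators pair via $-F^{-1}GE^{-1}$, which in general does not have entries in $\Lambda_S$. So your $P$ is not isotropic.

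Independently, the order count cannot finish the job. From $P\subseteq P^\perp$ and $\operatorname{ord}P\doteq\operatorname{ord}P^\perp$ you only learn that $P^\perp/P$ has unit order, i.e.\ is pseudo-null. Since $\Lambda_S$ has Krull dimension at least $2$, that is not enough: for instance $\Lambda_S/(3,t_1+1,\ldots,t_\mu+1)\cong\F_3$ is nonzero with order $1$. You would also need that $H/P$, which has a square presentation matrix of nonzero determinant, contains no nonzero pseudo-null submodule.

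The clean repair is your fallback, which actually names the right submodule.
\begin{enumerate}[(1)]
\item Use the well-defined convention of Definition \ref{def:Blanchfield}: the module $\Lambda_S^{2n}/A(t)^{\T}\Lambda_S^{2n}$ with pairing $x^{\T}A(t)^{-1}\overline{y}$ up to a unit. Your $\overline{x}^{\T}A(t)^{-1}y$ is not well defined on the module presented by $A(t)^{\T}$; it would need the presentation $A(t)$.
\item Let $P$ be the image of $0\oplus\Lambda_S^n$. This is exactly the kernel of the projection to $\Lambda_S^n/F(t)^{\T}\Lambda_S^n$, and it is isotropic because of the zero block.
\item If $x=(x_1,x_2)$ lies in $P^\perp$, pairing with the generators $(0,e_j)$ shows that $x_1^{\T}F^{-1}$ has entries in $\Lambda_S$. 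So $x_1=F^{\T}z$ for some $z\in\Lambda_S^n$.
\item Then $x-A(t)^{\T}\bsm 0\\ z\esm=\bsm 0\\ x_2-G^{\T}z\esm$, hence $x\in P$.
\end{enumerate}
Thus $P=P^\perp$ with no counting at all. This is precisely the fact the paper imports from \cite{FriedlThesis}.

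There are two smaller points in (ii).
\begin{enumerate}[(1)]
\item Elementary enlargements only account for (T1) and (T2). The moves (T3) and (T4) preserve the size, and the paper treats them by exhibiting a congruence between $A(t)$ and $A'(t)$ over $\Lambda_S$.
\item \cite{CFT18} by itself only compares families coming from C-complexes of a single link. That suffices for the specific chain through $A_{1/2}$ and $A'_{1/2}$ in the proof of Theorem \ref{prop:AlgConcLinks}. It does not cover an arbitrary chain in $\mathcal{M}_\mu^\ast$, which is what you get by using Theorem \ref{prop:IntroAlgConcLinks} as a black box.
\end{enumerate}
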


As in the classical case of links, the signature and the Fox-Milnor condition can be combined to unveil part of the structure of $\mathcal{C}_\mu^\alg$. These give the following theorem, which is a direct consequence of Lemma \ref{lemma:ZinfSummand} and Lemma \ref{lemma:Z2Summand}.

\begin{theorem}
\label{prop:StructureAlgConcGroup}
For all $\mu \geq 1$, the $\mu$-colored algebraic concordance group $\mathcal{C}_\mu$ contains a summand isomorphic to $\Z^\infty \oplus \Z_2^\infty$.
\end{theorem}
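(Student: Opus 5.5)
The plan is to build two homomorphisms out of $\mathcal{C}_\mu^\alg$ and then find explicit families on which they are "diagonal". These correspond to Lemma~\ref{lemma:ZinfSummand} and Lemma~\ref{lemma:Z2Summand}.

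The first homomorphism is a collection of multivariable signatures. For $A\in\mathcal{M}_\mu^\ast$ and $\omega\in(S^1\setminus\{1\})^\mu$ with $\Delta_A(\omega)\neq 0$, consider the Hermitian matrix
$$H(\omega)=\prod_i(1-\overline{\omega_i})\,A(\omega).$$
Its signature is additive under direct sum and vanishes on metabolic families, since the zero block gives a half-rank isotropic subspace and $H(\omega)$ is nonsingular. It is also invariant under generalized S-equivalence, as in Section~\ref{sub:Signature}. So each $\sigma_\omega$ is a homomorphism $\mathcal{C}_\mu^\alg\to\Z$.

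To get $\Z^\infty$, restrict to the diagonal $\omega=(z,\dots,z)$. Take a sequence of realizable families $A_k$, for instance direct sums of a $\mu=1$ Seifert matrix placed in all $\varepsilon$ with suitable signs. Choose them so that the one-variable reduction of $\Delta_{A_k}$ has a simple root on $S^1$ at $z_k$, with distinct $z_k$, and no other roots on $S^1$. Levine-type twisted matrices $\bsm -1 & 1\\ 0 & n_k\esm$ work. The jump of $\sigma$ at $z_k$ then separates the $A_k$, so the map $[A]\mapsto(\text{jumps of }\sigma \text{ at the } z_k)$ sends $\bigoplus\Z[A_k]$ isomorphically onto $\Z^\infty$.

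This gives a summand, provided the total map lands in a free group with the $A_k$ mapping to a basis. We can arrange that by taking the jumps averaged to signatures, as in Levine's argument. Averaging removes the dependence on non-generic $\omega$ and yields a homomorphism to $\bigoplus\Z$ that restricts to the identity on the span of the $A_k$; a surjection onto a free group splits.

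For the $\Z_2^\infty$ part I would use the Fox--Milnor condition of Section~\ref{sub:FoxMilnor}. If $[A]=0$, then
$$\Delta_A=f(t)f(t^{-1})$$
up to units of $\Lambda_S$. Choose distinct symmetric irreducible polynomials $p_k$ in one variable, pulled back along $t\mapsto t_1\cdots t_\mu$ or put in a single variable. Take them with no roots on $S^1$, so that the signature contributions vanish: Alexander polynomials of the type $mt^2-(2m+1)t+m$ with $4m+1$ not a square. Let $B_k$ have $\Delta_{B_k}=p_k$, which realizes the first-variable Seifert form, and is of order two via $B_k\oplus B_k$ being metabolic. Amphichirality-type constructions such as $B\oplus -B^{\T}$ congruent to $B\oplus B$ give this. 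Define
$$\phi_k([A])=\text{(exponent of } p_k \text{ in }\Delta_A)\bmod 2.$$
This is well defined by Fox--Milnor, since exponents of irreducible symmetric factors in $f\bar f$ are even, provided $p_k$ is not associated to $q(t)$ and $q(t^{-1})$ for some nonsymmetric $q$. It is also additive. Then $\bigoplus\phi_k$ maps $\bigoplus\Z_2[B_k]$ isomorphically, and since the target is an $\F_2$-vector space, this splits.

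The main obstacle is exhibiting the order-two elements $B_k$ inside $\mathcal{C}_\mu^\alg$ and showing that $2[B_k]=0$ there, while keeping the $\Z^\infty$ and $\Z_2^\infty$ pieces independent. Independence is handled since the $B_k$ have zero signature and the $A_k$ can be chosen with $\phi_k$ trivial. For $2[B_k]=0$, I would first try the knot-theoretic fact that $B\oplus B$ is metabolic when $B$ is a Seifert matrix of a negative-amphichiral knot such as the figure eight, and then embed it colorwise into $\mathcal{M}_\mu$ by a fixed $\varepsilon$-sign pattern, checking that metabolicity and nontriviality of the Alexander polynomial survive.
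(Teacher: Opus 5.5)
Your framework is sound: you build homomorphisms out of $\mathcal{C}_\mu^\alg$ that restrict to isomorphisms on the span of explicit classes, and your parity maps $\phi_k$ are well defined by Proposition~\ref{prop:FoxMilnor} and unique factorisation in $\Lambda_S$. The resulting retraction gives the summand, for both pieces at once, more directly than the paper's indivisibility argument.

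\textbf{The genuine gap is in the $\Z_2^\infty$ part.} You need infinitely many classes with $2[B_k]=0$ and pairwise distinct symmetric irreducible Alexander polynomials, and you exhibit only one (the figure eight). Moreover, the polynomials you propose do not support such classes in general. If $A$ is a $2\times 2$ Seifert matrix with $\Delta_A\doteq mt^2-(2m+1)t+m$ (e.g.\ a twist knot), then $A+A^{\T}\cong\langle a,-(4m+1)a\rangle$ over $\Q$. Hence $2[A+A^{\T}]=a\langle 1,1,-(4m+1),-(4m+1)\rangle$, which vanishes in $W(\Q)$ only if $4m+1$ is a sum of two rational squares. For $m=5$ it is not ($21=3\cdot 7$), so such classes are not of order two already in the classical algebraic concordance group. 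Your plan contains no argument that $2[B_k]=0$ in $\mathcal{C}_\mu^\alg$.

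The missing ingredient is the paper's family: the amphichiral knots $K(n,-n,1)$ with Seifert matrix $\bsm n&1\\0&-n\esm$.
\begin{itemize}
\item They have order at most two classically.
\item This passes to the family $\{A,\ldots,A,A^{\T},\ldots,A^{\T}\}$ of Lemma~\ref{lemma:FakeLink}, because a congruence putting $A\oplus A$ in metabolic form does the same for $A^{\T}\oplus A^{\T}$.
\item Their Alexander polynomials $n^2t^2-(2n^2+1)t+n^2$ are irreducible and pairwise distinct.
\end{itemize}
This is exactly your case $m=n^2$, where $4m+1=(2n)^2+1^2$. With this family, your maps $\phi_k$ finish the argument.

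\textbf{A second, repairable problem is in the $\Z^\infty$ part.} For $\mu\geq 2$ the diagonal $\omega=(z,\ldots,z)$ is not a safe slice. A nonzero $\Delta_A$ can vanish identically on it, and then $H_A(z,\ldots,z)$ is degenerate and its signature is not a Witt invariant. For instance, take the metabolic family with $M^{++}=\bsm 0&0\\0&2m\esm$ and $M^{+-}=\bsm 0&1\\-1&2m-1\esm$, the other members given by transposition. It has $\Delta_M=(t_1-t_2)^2\neq 0$ and represents $0$. Yet $H_M(z,z)=\operatorname{diag}\bigl(0,-2|1-z|^2(2m\Re z-2m+1)\bigr)$ has signature jumping by $2$ at $\Re z=\frac{2m-1}{2m}$, which is exactly the root of your twist knot with $n_k=-m$. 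So you have not shown that your jump maps are defined on classes.

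Use instead a slice $(z,\zeta_2,\ldots,\zeta_\mu)$ with the $\zeta_i$ algebraically independent, on which no nonzero $\Delta_A$ vanishes identically, or argue on the whole torus as the paper does. Finally, the jump at a simple root is $\pm 2$, so you must halve the jumps to send the $[A_k]$ to a basis. This is legitimate because, at points where $\Delta_A\neq 0$, the signature is congruent mod $2$ to the size of $A$.
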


\subsection{Other versions of algebraic concordance}
We conclude this introduction by mentioning other approaches to the definition of algebraic concordance of links. In the case of knots, there exists several equivalent criterion for two knots to be algebraically concordant. Indeed, as explained in the introduction of \cite{CCS26}, algebraic sliceness of a knot $K$ can also be defined by considering equivariant intersection forms of spin fillings of the $0$-surgery along $K$, or via its Blanchfield pairing. Therefore, one could also try to define algebraic concordance of links starting from each of these two approaches. This is done in \cite{CCS26}, yielding so called \emph{homology surgery invariant} and \emph{Blanchfield invariant}.

Note that for two components, $2$-colored links with non trivial Alexander polynomials and equal linking numbers, algebraic concordance defined via generalized Seifert matrices (see Definition \ref{def:AlgConcLinks}) implies the vanishment of the homology surgery invariant and of the Blanchfield invariant. Indeed, if two such links are algebraically concordant, then their families of generalized Seifert matrices are algebraically concordant, and thus the matrices $H_F$ and $H_{F'}$ of \cite[Theorem 1.1 (5)]{CCS26} are Witt equivalent. It follows that the homology surgery invariant is represented by a metabolic matrix, and thus vanishes. The vanishment of the Blanchfield invariant follows from \cite[Corollary 1.7]{CCS26}.

\subsection*{Organization}

This paper is organized as follows. In Section \ref{sec:Background} we give all necessary background concerning colored links, including the definition of C-complexes, generalized Seifert matrices and generalized S-equivalence. In Section \ref{sec:AlgConcLinks}, we define Witt equivalence and algebraic concordance of families of matrices and prove Theorem \ref{thm:IntroAlgConcLinks}. We then connect with links, proving Theorem \ref{prop:IntroAlgConcLinks} in Section~\ref{sub:AlgebraicConcordanceOfLinks}. Finally, in Section \ref{sec:AlgConcInvariants}, we use the signature, the Alexander polynomial and the Blanchfield pairing of colored links to define algebraic concordance invariants of families of matrices. We then use these invariants to prove Theorem \ref{prop:StructureAlgConcGroup}. We ultimately collect a few open questions in Section~\ref{sec:Questions}.

\subsection*{Acknowledgements}
The author wishes to thank his advisors David Cimasoni and Anthony Conway for helpful discussions and endless support. This work was supported by the Swiss NSF grant 200021-212085.

\subsection*{Notations and conventions}
Unless specified, all matrices have integral entries. We consider colored links, and denote by $\mu$ the number of colors. The symbol $\sim$ denotes an equivalence relation, either clear from the context or with an indicative index. We write $\Lambda := \Z[t_1^{\pm1}, \ldots, t_\mu^{\pm1}]$ for the ring of $\mu$-variable Laurent polynomial and $\Lambda_S$ for its localization with respect to the multiplicative system generated by elements of the form $(t_i-1)$. The symbol $\dot=$ stands for equality up to multiplication by units of $\Lambda$, while the symbol $\ddot = $ stands for equality up to multiplication by units of $ \Lambda_S$. The shorthand $t = (t_1, \ldots, t_\mu)$ will sometimes be used when the context allows no confusion.

\section{Background on colored links and C-complexes}
\label{sec:Background}

In this section, we give all the necessary background on colored links for our work. In Section~\ref{sec:ColoredLinks} we recall the definition of colored links, C-complexes and generalized Seifert matrices. Section~\ref{sec:SEquivalence} is devoted to generalized S-equivalence.

\subsection{Colored links, C-complexes and generalized Seifert matrices} 
\label{sec:ColoredLinks}
In this section, we recall the definition of colored links, together with the two main tools that we will use in this work: the analogues of Seifert surfaces, called C-complexes, and the analogue of Seifert matrices, called generalized Seifert matrices. We follow \cite{CSArf} and use the figures therein. \medskip

We start with a definition. 

\begin{definition}
A \emph{$\mu$-colored link} is an oriented link in $S^3$, together with a surjective map assigning to each component a color in $\{1, \ldots, \mu\}$. Two colored links are \emph{isotopic} if they are isotopic as oriented links, via an isotopy preserving the colors.
\end{definition}

We denote a colored link by $L=L_1\cup\dots\cup L_\mu$, where $L_i$ stands for the sublink of $L$ of components of color $i$. Colored links are a common generalization of oriented links, which correspond to the case $\mu = 1$, and ordered links, which correspond to the case where $\mu$ coincides with the number of components.

As the purpose of this note is to define algebraic concordance of colored links, we shall recall what are the analogues of Seifert surfaces and Seifert matrices for colored links. This is what we now do. The generalized notion of Seifert surface is due to Cooper~\cite{Coo82}, who defined it for~2-component~2-colored links. Its generalization to an arbitrary number of colors and components is due to Cimasoni, see~\cite{Cim04}.

\begin{definition}
\label{def:C-cplx}
A {\em C-complex\/} for a~$\mu$-colored link~$L=L_1\cup\dots\cup L_\mu$ is a union~$F=F_1\cup\dots\cup F_\mu$ of surfaces embedded in~$S^3$ such that~$F$ is connected and satisfies the following conditions:
\begin{enumerate}
    \item for all~$i$, the surface~$F_i$ is a (possibly disconnected) Seifert surface for~$L_i$;
    \item for all~$i\neq j$, the surfaces~$F_i$ and~$F_j$ are either disjoint or intersect in a finite number of {\em clasps\/} (see Figure~\ref{fig:Clasp});
    \item for all~$i,j,k$ pairwise distinct, the intersection~$F_i\cap F_j\cap F_k$ is empty.
\end{enumerate}
\end{definition}

\begin{figure}
\centering
\begin{overpic}[width=10cm]{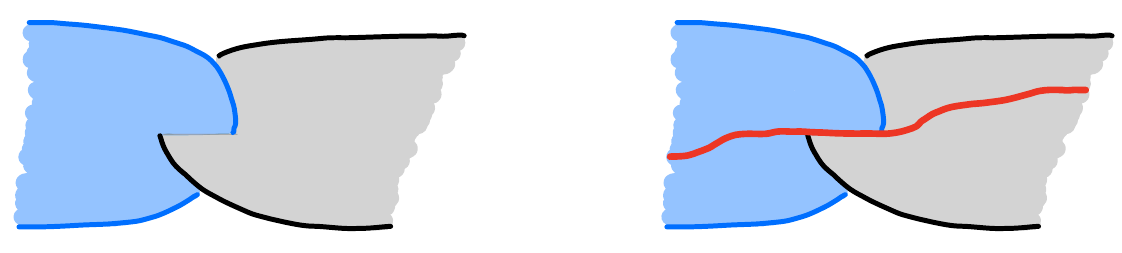}
 \put (5,15){$F_i$}
    \put (32,13){$F_j$}
    \end{overpic}
\caption{Left: a clasp intersection. Right: a well-behaved cycle crossing such a clasp intersection.}
\label{fig:Clasp}
\end{figure}

The existence of a C-complex for any given colored is a standard fact, which follows from the fact that any family of Seifert surfaces for the sublinks of different colors can be isotoped to yield only clasp intersections, see~\cite{Cim04}. On the other hand, relating two C-complexes for isotopic colored links is a more difficult task which was initiated by \cite[Lemma 3]{Cim04} and completed by \cite[Theorem 1.3]{DMO21}. We recall this result here for convenience.

\begin{lemma}[\cite{DMO21}]
\label{lemma:SEquivalenceCComplex}
Two C-complexes for isotopic colored links are related by a finite sequence of the following moves or their inverses:
\begin{itemize}
\item[(T0)] ambient isotopy;
\item[(T1)] handle attachment along one of the surfaces;
\item[(T2)] add a ribbon intersection and push along an arc (see the left of Figure \ref{T23});
\item[(T3)] pass through a clasp (see the right of Figure \ref{T23});
\item[(T4)] replace a push along an arc by a push along another arc (see Figure~\ref{T4}).
\end{itemize}
\end{lemma}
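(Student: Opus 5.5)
Since this is the result of \cite{DMO21} (building on \cite[Lemma 3]{Cim04}), the plan is to reconstruct their argument in outline. The idea is to deal with the colors one at a time, using the classical fact that any two Seifert surfaces for the same oriented link are related by ambient isotopy and handle attachments. The other colors are then handled by a general position analysis of the moving surface.

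First, using (T0) we may assume both C-complexes $F=F_1\cup\dots\cup F_\mu$ and $F'=F'_1\cup\dots\cup F'_\mu$ bound the same colored link $L$. Next, by a preliminary sequence of (T1) moves (tubing along arcs), we would arrange that each $F_i$ and $F'_i$ is connected, meeting every other colored surface it meets in at least one clasp. (T1) moves with disjoint handles keep $F$ connected, so the C-complex conditions survive. We then argue by induction on the number of colors $i$ for which $F_i\neq F'_i$.

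For the inductive step, fix a color, say $1$. By the classical theorem (e.g.\ via the Thom--Pontryagin map to $S^1$ and a generic homotopy of maps $S^3\setminus L\to S^1$), $F_1$ and $F'_1$ are related by handle attachments (T1) and an ambient isotopy fixing $L$. This isotopy, however, moves $F_1$ through the fixed surfaces $F_2,\dots,F_\mu$. The key step is to put this one-parameter family $\{F_1^s\}_{s\in[0,1]}$ in general position with respect to $F_2\cup\dots\cup F_\mu$. Generically, $F_1^s$ meets each $F_j$ in clasps and ribbon arcs. Ribbon arcs are then removed by pushing along an arc, which is how (T2) enters the picture. The codimension-one events in the family are then to be classified:
\begin{itemize}
\item birth or death of a ribbon intersection, giving (T2);
\item a clasp sliding across a boundary of another surface, giving (T3);
\item a change of the arc along which a ribbon is pushed off, giving (T4);
\item tangencies avoidable by small perturbations, absorbed into (T0).
\end{itemize}
Finally, we would check that triple points $F_i\cap F_j\cap F_k$ occur in codimension at least two, and hence can be avoided by genericity. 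We would also check that connectedness of the complex is never lost, which can be ensured by adding extra tubes (T1) beforehand. After finishing color $1$ we have $F_1=F'_1$, and we proceed to the next color with $F_1$ held fixed.

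The main obstacle is the singularity analysis in the step above. One has to show that every codimension-one degeneration of the pair (moving surface, fixed surfaces), together with the choice of pushing arcs used to resolve ribbon intersections into clasps, is realized by one of (T2)--(T4) or their inverses. Independence of the choice of arcs, namely the (T4) move, is precisely the part missing from \cite{Cim04}. I would try to prove it by showing that any two pushing arcs for a given ribbon intersection are related by sliding across the surfaces, with each elementary slide realizable by (T4).
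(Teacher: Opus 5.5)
The paper gives no argument of its own here: it quotes the lemma from \cite{DMO21}, which completes \cite[Lemma 3]{Cim04} by adding the move (T4). Judged on its own terms, your outline has genuine gaps. Two of your genericity claims are false, and the step you defer as ``the main obstacle'' is the entire content of the lemma.

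\emph{Triple points.} These are not of codimension two. The moving surface $F_1^s$ and a fixed clasp arc of $F_j\cap F_k$ (with $j,k\neq 1$) meet transversally in isolated points, by the count $2+1-3=0$. Such intersections are stable, so they persist on open intervals of $s$. So whenever $F_1$ must be moved across a clasp between two other colours, the intermediate objects are not C-complexes. This is exactly the situation that (T3), ``pass through a clasp'', is meant to handle. You therefore need an explicit way to remove these points, and a proof that doing so is realised by the moves.

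\emph{Shape of $F_1^s\cap F_j$.} This intersection is not generically a union of clasps and ribbon arcs. Closed curves of intersection are equally stable. They are created at interior centre tangencies and at interior saddle tangencies, which also reconnect clasps into ribbons of both types. They also appear as soon as the core of one of the tubes supplied by the classical theorem crosses some $F_j$, because that tube then meets $F_j$ in a meridian circle. Consequently those handle attachments are not (T1) moves of the C-complex as they stand. Interior saddle and centre tangencies are codimension-one events of a generic one-parameter family; they cannot be perturbed away and they change the intersection pattern. So they cannot be ``absorbed into (T0)'', and your list of events omits them.

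Concretely, three things are missing:
\begin{enumerate}
\item[(a)] A resolution procedure that turns a generic configuration (clasps, ribbons with both ends on $L_1$ or both on $L_j$, circles, triple points) into a C-complex.
\item[(b)] A proof that any two choices made in this procedure give C-complexes related by (T0)--(T4). The choices include the pushing arcs for ribbons, the arcs used to open circles into ribbons, and the ways of sliding clasps off $F_1^s$.
\item[(c)] A check that each codimension-one event of the family changes the resolved C-complex only by these moves.
\end{enumerate}

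Your plan for (T4) is also only asserted. You claim that any two pushing arcs are related by elementary slides, each realised by (T4). But arcs from a ribbon to the boundary in a surface of positive genus, crossed by clasps with other colours and by other ribbons, are not obviously connected by such local slides. Each kind of slide, across a clasp of another colour or across another ribbon, has to be analysed separately. This is precisely where Cimasoni's original argument broke down, so it cannot be left as a sketch.
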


\begin{figure}[h]
\centering
\begin{overpic}[width=6.5cm]{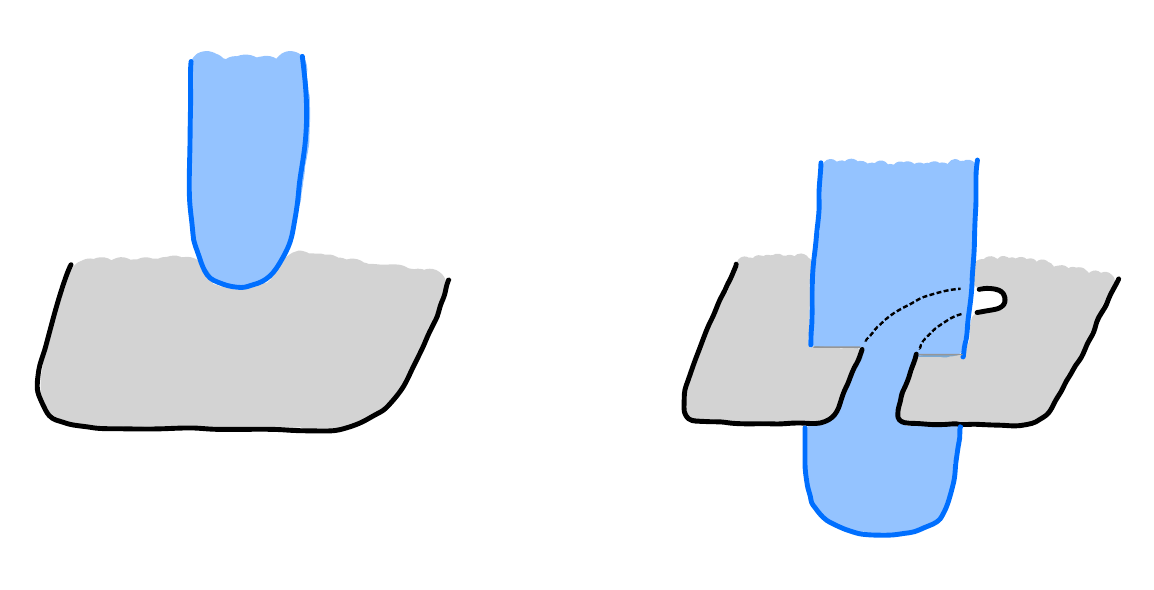}
\put(45,20){$\longrightarrow$}
\put(44,25){$\mathrm{(T2)}$}
\end{overpic}
\hskip1.5cm
\begin{overpic}[width=6.5cm]{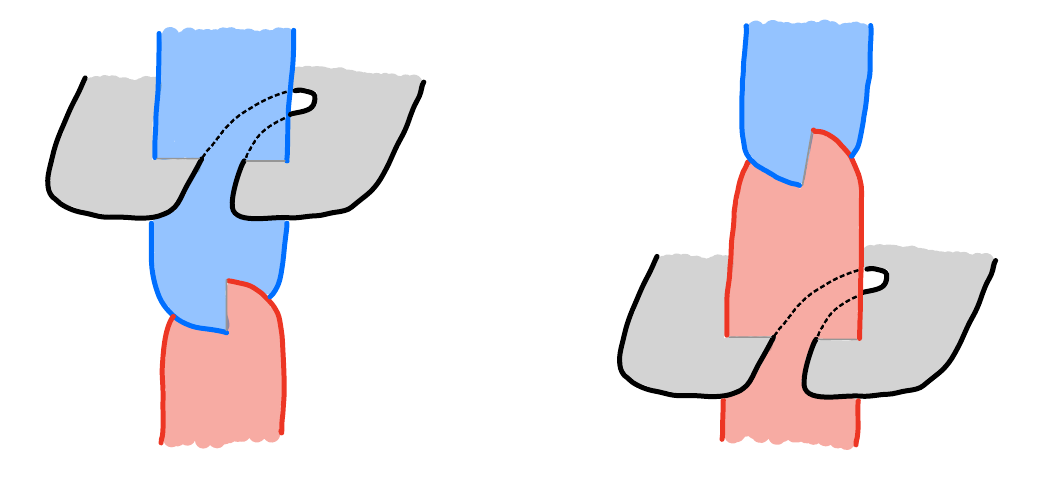}
\put(45,20){$\longrightarrow$}
\put(44,25){$\mathrm{(T3)}$}
\end{overpic}
\caption{The movements~(T2) and~(T3).}
\label{T23}
\end{figure}

\begin{figure}[h]
\centering
\begin{overpic}[width=10cm]{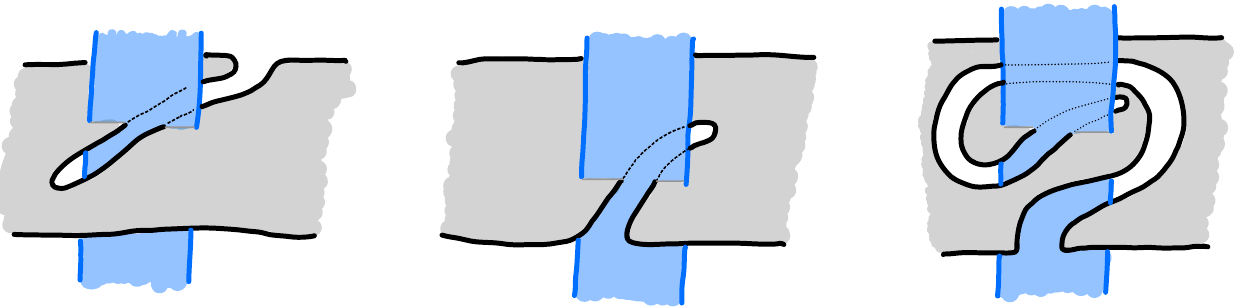}
\put(28,10){$\longleftarrow$}
\put(27,13.5){$\mathrm{(T4_a)}$}
\put(67,10){$\longrightarrow$}
\put(65,13.5){$\mathrm{(T4_b)}$}
\end{overpic}
\caption{Examples of~(T4) movements. The move on the right can be seen as the composition of two replacements of the arc in the center by an arc going to the back side of the blue strip.}
\label{T4}
\end{figure}

Using C-complexes one defines {\em generalized Seifert forms\/} as follows.
For any sign~$\varepsilon=(\varepsilon_1,\dots,\varepsilon_\mu)\in\{\pm\}^\mu$, let
$$ \alpha^\varepsilon\colon H_1(F)\times H_1(F)\longrightarrow\Z$$
be the bilinear form given by~$\alpha^\varepsilon(x,y)=\lk(x^\varepsilon,y)$,
where~$x^\varepsilon\subset S^3\setminus F$ denotes a cycle representing the homology class~$x\in H_1(F)$ pushed in the~$\varepsilon_i$-normal direction off~$F_i$ for all~$i=1,\dots,\mu$. For this to make sense, one needs this cycle to be well-behaved when crossing clasps, as illustrated in the right of Figure~\ref{fig:Clasp}.
Clearly, any homology class in~$H_1(F)$ can be represented by such well-behaved cycles. A \emph{generalized Seifert matrix} $A^\varepsilon = A^\varepsilon_F$ is a matrix representing the generalized Seifert form $\alpha^\varepsilon$. Note that, as in the classical case of knots, cycles can be slid simultaneously in one direction, thus $A^{-\varepsilon}=(A^\varepsilon)^{\T}$ for all~$\varepsilon\in\{\pm\}^\mu$.

Note that in the case~$\mu=1$, a C-complex for a~1-colored link~$L$ is a standard Seifert surface for the oriented link~$L$, a matrix~$A^-$ corresponding to~$\varepsilon=-$ is a classical Seifert matrix, and~$A^+$ its transpose. Thus, in this case, considering only one of the two matrices is sufficient to determine the whole family of (two) Seifert matrices.

In the general case, it is still true that a subset of the family of generalized Seifert matrices for a given link determines the whole family. Indeed, as $A^{-\varepsilon}=(A^\varepsilon)^{\T}$ for all~$\varepsilon\in\{\pm\}^\mu$, it is sufficient to know only one representative of each pair $\{A^\varepsilon, A^{-\varepsilon}\}$. Nevertheless, there is in general no formula relating the matrices $A^\varepsilon$ and $A^{\varepsilon'}$ for signs $\varepsilon'\neq \pm\varepsilon$. Hence, one has to consider a \emph{family} of generalized Seifert matrices, rather than a single one. This is what we do in the remainder of this paper.

To motivate the fact that families of generalized Seifert matrices are indeed analogues of Seifert matrices for colored links, let us mention that they permit the computations of the generalization to colored links of several invariants, such as the generalized Alexander-Conway polynomial \cite{Cim04}, the generalized Levine-Tristram signature \cite{C-F08}, and the generalized Blanchfield pairing \cite{CFT18, ConwayBlanchfield}.

\subsection{Generalized S-equivalence}
\label{sec:SEquivalence}
In this section, based on computations from \cite{Cim04, DMO21, CSArf}, we describe how two families of generalized Seifert matrices are related, yielding a notion of \emph{generalized S-equivalence}. This notion will play a key role in Section \ref{sec:AlgConcLinks}, when defining algebraic concordance of links.
\medskip

The way two families of generalized Seifert matrices for the same link are related has already been studied, sometimes indirectly, in the litterature, see for instance the computations in the proofs of \cite[Lemma 4]{Cim04} and \cite[Theorem 2.1]{C-F08}, in \cite[Section 3]{DMO21}, or \cite[Appendix A]{CSArf}. In the last reference, computations are performed in the slightly different setting of \emph{totally connected} C-complexes, where surfaces of two different colors in a C-complex are required to intersect non trivially. It is not difficult to see that the computations therein hold in the case where C-complexes are not totally connected, under slight modifications. This leads to Lemma \ref{lemma:SEquivalence} below, which collects four transformations relating generalized Seifert matrices of a fixed colored link. We now present these four transformations, which reflects Lemma \ref{lemma:SEquivalenceCComplex} in terms of generalized Seifert matrices.

Given two signs~$\varepsilon,\sigma\in\{\pm\}$, we write~$\delta(\varepsilon,\sigma)=1$ if~$\varepsilon=\sigma$, and~$\delta(\varepsilon,\sigma)=0$ otherwise.
Also, for~$z$ an element of a basis of a free abelian group~$H\simeq\Z^n$, we let~$\chi(z)$ denote the element in~$\Z^n$ with the~$z$-coordinate equal to~$1$ and all the others equal to~$0$. Let $A:= \{A^\varepsilon\}_{\varepsilon \in \{\pm\}^\mu}$ and $B:= \{B^\varepsilon\}_{\varepsilon \in \{\pm\}^\mu}$ denote families of generalized Seifert matrices.

(T1) For a color $i \in \{1, \ldots, \mu\}$ and a sign $\sigma_i$, the transformation (T1) transforms the family $A$ into the family $B$ with 
$$
B^\varepsilon=\begin{pmatrix}
A^\varepsilon & 0 & \xi(\varepsilon) \\
0 & 0 & \delta(\varepsilon_i,\sigma_i) \\
\xi(-\varepsilon)^{\T} & \delta(\varepsilon_i,-\sigma_i) & \ell(\varepsilon)
\end{pmatrix}\,,
$$
with~$\ell(\varepsilon) = \ell(-\varepsilon) \in\Z$ and~$\xi(\varepsilon)$ some integral vector.

\medskip

(T2) Let us now fix two distinct colors~$i,j\in\{1,\ldots,\mu\}$ and two signs~$\sigma_i, \sigma_j$. The transformation (T2) transforms the family $A$ into the family $B$ where
$$B^\varepsilon=\begin{pmatrix}
A^\varepsilon & 0 & \xi(\varepsilon)\\
0 & 0 & \delta(\varepsilon_i,\sigma_i)\delta(\varepsilon_j,\sigma_j)\\
\xi(-\varepsilon)^{\T} & \delta(\varepsilon_i,-\sigma_i)\delta(\varepsilon_j,-\sigma_j) & \ell(\varepsilon)
\end{pmatrix}\,,
$$
with~$\ell(\varepsilon)=\ell(-\varepsilon)\in\Z$ and~$\xi(\varepsilon)$ some integral vector.

\medskip

(T3) Fix three pairwise distinct colors~$i,j,k\in\{1,\ldots,\mu\}$  and three signs~$\sigma_i, \sigma_j, \sigma_k$. The transformation (T3) transforms the family $A$ into the family $B$ where
\[
A^\varepsilon=\begin{pmatrix}
A(\varepsilon) & \delta(\varepsilon_i,-\sigma_i)\chi(z) & \xi(\varepsilon)\\
\delta(\varepsilon_i,\sigma_i)\chi(z)^{\T} & 0 & \delta(\varepsilon_i,\sigma_i)\delta(\varepsilon_j,\sigma_j)\\
\xi(-\varepsilon)^{\T} & \delta(\varepsilon_i,-\sigma_i)\delta(\varepsilon_j,-\sigma_j) & \ell(\varepsilon)
\end{pmatrix}
\]
and
\[
B^\varepsilon=\begin{pmatrix}
A(\varepsilon) & \delta(\varepsilon_k,-\sigma_k)\chi(z) & \xi(\varepsilon)- \delta(\varepsilon_i, \sigma_i) \delta(\varepsilon_k, -\sigma_k) \chi(z)\\
\delta(\varepsilon_k,\sigma_k)\chi(z)^{\T} & 0 & \delta(\varepsilon_j,\sigma_j)\delta(\varepsilon_k,\sigma_k)\\
\xi(-\varepsilon)^{\T}- \delta(\varepsilon_i, -\sigma_i)\delta(\varepsilon_k, \sigma_k) \chi(z) & \delta(\varepsilon_j,-\sigma_j)\delta(\varepsilon_k,-\sigma_k) & \ell'(\varepsilon)
\end{pmatrix}\,,
\]
with $\ell(\varepsilon)$ an integer satisfying $\ell(\varepsilon) = \ell(-\varepsilon)$ and 
$$\ell'(\varepsilon) = \ell(\varepsilon)- \delta(\varepsilon_i, -\sigma_i) \delta(\varepsilon_j, \sigma_j)\delta(\varepsilon_k,\sigma_k) - \delta(\varepsilon_i, \sigma_i)\delta(\varepsilon_j, -\sigma_j)\delta(\varepsilon_k, -\sigma_k).$$
The notation $\xi(\varepsilon)$ stands for an integral vector and~$A(\varepsilon)$ for an integral matrix.

\medskip

(T4) Finally, fix two distinct colors~$i,j\in\{1,\ldots,\mu\}$ and two signs~$\sigma_i, \sigma_j$. The transformation (T4) transforms the family $A$ into the family $B$ where
\begin{equation}
\label{eq:T4}
A^\varepsilon=\begin{pmatrix}
A(\varepsilon) & \delta(\varepsilon_i,-\sigma_i)\chi(z) & \xi(\varepsilon)\\
\delta(\varepsilon_i,\sigma_i)\chi(z)^{\T} & 0 & \delta(\varepsilon_i,\sigma_i)\delta(\varepsilon_j,\sigma_j)\\
\xi(-\varepsilon)^{\T} & \delta(\varepsilon_i,-\sigma_i)\delta(\varepsilon_j,-\sigma_j) & \ell(\varepsilon)
\end{pmatrix}
\end{equation}
and
\begin{equation}
\label{eq:T4'}
B^\varepsilon=\begin{pmatrix}
A(\varepsilon) & \delta(\varepsilon_i,\mp\sigma_i)\chi(z) & \xi(\varepsilon)+n\chi(z)\\
\delta(\varepsilon_i,\pm\sigma_i)\chi(z)^{\T} & 0 & \delta(\varepsilon_i,\pm\sigma_i)\delta(\varepsilon_j,\sigma_j)\\
\xi(-\varepsilon)^{\T}+n\chi(z)^{\T} & \delta(\varepsilon_i,\mp\sigma_i)\delta(\varepsilon_j,-\sigma_j) & \ell(\varepsilon)+n
\end{pmatrix}\,,
\end{equation}
with~$\ell(\varepsilon) = \ell(-\varepsilon) \in\mathbb{Z}$,~$\xi(\varepsilon)$ an integral vector,~$A(\varepsilon)$ an integral matrix, and~$n\in \{0, \pm1\}$ independent of~$\varepsilon$.

\medskip

Note that even though these four transformations come from the topological framework of C-complexes, they do make sense in a more general algebraic setting. Indeed, we can apply them to any family of integer square matrices in $\mathcal{M}_\mu$. 

In addition to the four transformations above, we need to consider change of bases, and this corresponds to unimodular congruence. The topological meaning of the following definition is clear from Lemma \ref{lemma:SEquivalence} below.

\begin{definition}
\label{def:GSEquivalence}
Two families of matrices in $\mathcal{M}_\mu$ are \emph{generalized S-equivalent} if they are related by a finite sequence of unimodular congruences, and transformations (T1) through (T4) or their inverses.
\end{definition}

\begin{remark}
\label{rk:SEquivalence}
For $\mu = 1$, we recover the classical definition of S-equivalence. Indeed, as there are no distinct $i,j \in \{1\}$, only the first transformation (T1) is considered, and this is the elementary transformation generating classical S-equivalence, see \cite[Definition 8.3]{Lickorish}. Moreover, the set $\mathcal{M}_1$ consists of pairs of matrices $\{A, A^{\T}\}$ and two families $\{A, A^{\T}\}$ and $\{B, B^{\T}\}$ are congruent if and only if the matrices $A$ and $B$ are congruent as $CAC^{\T} = B$ implies $CA^{\T}C^{\T} = B^{\T}$. 
\end{remark}

The next lemma follows from the computations in \cite[Appendix A]{CSArf}. Details are given in \cite{GSThesis}.

\begin{lemma}
\label{lemma:SEquivalence}
Two families of generalized Seifert matrices for isotopic colored links are generalized S-equivalent.\hfill$\square$
\end{lemma}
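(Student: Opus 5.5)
The plan is to combine the topological relation between C-complexes given by Lemma~\ref{lemma:SEquivalenceCComplex} with a move-by-move translation into matrices. Let $L$ and $L'$ be isotopic colored links with C-complexes $F$ and $F'$. By Lemma~\ref{lemma:SEquivalenceCComplex}, $F$ and $F'$ are related by a finite sequence of moves (T0)--(T4) and their inverses. It therefore suffices to show the following: whenever two C-complexes differ by a single move, their families of generalized Seifert matrices, computed in suitable bases of $H_1$, differ by a unimodular congruence together with the corresponding algebraic transformation (T1)--(T4) of Section~\ref{sec:SEquivalence}, or its inverse. A change of basis of $H_1(F)$ is a unimodular congruence with the same matrix $C$ for every $\varepsilon$, because all the forms $\alpha^\varepsilon$ live on the same group $H_1(F)$. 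This is exactly the congruence allowed in Definition~\ref{def:GSEquivalence}. Once the single-move claim holds, concatenating over the sequence of moves gives the lemma.

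Next, each move is handled separately.

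\emph{(T0), ambient isotopy.} This preserves all linking numbers of pushed-off cycles, so it induces an isomorphism on $H_1$ under which the forms $\alpha^\varepsilon$ agree.

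\emph{(T1), a handle attached to $F_i$.} I would extend a basis of $H_1(F)$ by two classes: a meridian-type curve $z_1$ around the tube, and a curve $z_2$ running over the handle. The curve $z_1$ bounds a disc in the complement of the pushed-off copies, so it links trivially with every old class. It links $z_2$ exactly once, and on which side it does so depends only on $\varepsilon_i$. This produces the entries $\delta(\varepsilon_i,\pm\sigma_i)$ and the zero blocks, with $\xi(\varepsilon)=\lk(x^\varepsilon,z_2)$ and $\ell(\varepsilon)=\lk(z_2^\varepsilon,z_2)$. The identity $\ell(\varepsilon)=\ell(-\varepsilon)$ holds because simultaneously sliding all push-offs gives $\alpha^{-\varepsilon}=(\alpha^\varepsilon)^{\T}$.

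\emph{(T2), a new ribbon turned into clasps.} The same computation applies, except that the small loop $z_1$ now passes through both $F_i$ and $F_j$. Its linking with $z_2$ is therefore governed by both signs, giving the product $\delta(\varepsilon_i,\sigma_i)\delta(\varepsilon_j,\sigma_j)$.

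\emph{(T3) and (T4).} Here the basis is kept fixed, apart from the loop $z$ that is dragged through a clasp or whose pushing arc is changed. One recomputes only the linking numbers involving the modified cycle. The correction terms in $\xi$ and $\ell'$ record the additional crossings between the push-offs that are picked up as the arc passes the third surface $F_k$, or wraps once around the strip (the integer $n$).

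I expect the main obstacle to be (T3), and to a lesser extent (T4). There one must track carefully how the well-behaved push-off of a cycle crossing several clasps changes, and verify sign conventions so that the correction terms match the stated matrices exactly for all $2^\mu$ signs simultaneously. My approach would be to draw local models of the pushed cycles near the moved clasp and compute the linking numbers case by case in $(\varepsilon_i,\varepsilon_j,\varepsilon_k)$, following the computations of \cite[Appendix A]{CSArf}. I would then check that dropping the total connectedness hypothesis used there is harmless: the extra clasps created by (T2) can be chosen so that $F$ remains connected, and all computations are local.
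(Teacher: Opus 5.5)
Your proposal is correct and follows the same route as the paper. The paper likewise obtains the lemma by combining the move set of Lemma~\ref{lemma:SEquivalenceCComplex} with the move-by-move linking-number computations of \cite[Appendix A]{CSArf}, observes that these computations carry over from totally connected to arbitrary C-complexes with slight modifications, and defers the details, notably for (T3) and (T4), to \cite{GSThesis}. Your handling of (T0)--(T2) and of a change of basis as an $\varepsilon$-independent unimodular congruence is what is needed, and, as you anticipate, the real work is the case-by-case sign check for (T3) and (T4).
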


\section{Colored algebraic concordance groups}
\label{sec:AlgConcLinks}
This section is devoted to algebraic concordance of families of matrices. It is organized as follows. First, in Section \ref{sub:FamiliesOfMatrices} we define \emph{non degenerate} families of matrices, and show that Witt equivalence yields an equivalence relation on such families. In Section~\ref{sub:AlgebraicConcordanceOfMatrices}, we show that families of matrices with non trivial Alexander polynomial are non degenerate and prove Theorem \ref{thm:IntroAlgConcLinks}.

\subsection{Witt equivalence of families of matrices}
\label{sub:FamiliesOfMatrices}
In this section, we define non degeneracy and Witt equivalence for families of integral matrices. 
\medskip

Recall Definition \ref{def:Metab} of metabolic families of matrices from the introduction. In the classical case of a single matrix, metabolicity leads to an equivalence relation on a set of matrices under certain assumption of non degeneracy. We introduce an analogous notion for families of matrices. As shown in Proposition \ref{prop:WittEquiv}, this notion is sufficient to yield an equivalence relation.

\begin{definition}
\label{def:NonDegeneracy}
A family of integral matrices $A$ in $\mathcal{M}_\mu$ is \emph{non degenerate} if there exists integers $a_\varepsilon \in \Z$ such that the matrix $\sum_{\varepsilon \in \{\pm\}^\mu} a_\varepsilon A^\varepsilon$ has non zero determinant.
\end{definition}

\begin{remark}
\label{rk:NonDegenerate}
\begin{enumerate}[(i)]
\item For $\mu = 1$, elements of $\mathcal{M}_1$ consists of pairs of matrices $\{A, A^{\T}\}$. In this case, the subset of non degenerate families of matrices contains all Seifert matrices of knots, as for such pairs the determinant of $A-A^{\T}$ is equal to one.
\item The definition is equivalent if we allow rational $a_\varepsilon$, as one can always multiply by all denominators.
\end{enumerate}
\end{remark}

Using this notion, we define Witt equivalence for families of non degenerate matrices. Definition \ref{def:Witt} is a particular case, which concerns only families of matrices with non trivial Alexander polynomial. These are non degenerate by Lemma \ref{lemma:APNon0} below.

\begin{definition}
\label{def:WittNdeg}
Two non degenerate families of matrices $A$ and $B$ of $\mathcal{M}_\mu$ are called \emph{Witt equivalent} if the direct sum $A\oplus -B$ is metabolic.
\end{definition}

\begin{remark}
Generalizing an other classical definition of Witt equivalence, it would also be natural to say that two non degenerate families $A$ and $B$ of $\mathcal{M}_\mu$ are Witt equivalent if there exists non degenerate metabolic families $M_1$ and $M_2$ such that $A\oplus M_1$ and $B\oplus M_2$ are congruent. Using Lemma \ref{lemma:WittCancellation} below, one easily shows that this is in fact equivalent to Definition \ref{def:WittNdeg}.
\end{remark} 

The next proposition ensures that Witt equivalence is indeed an equivalence relation on the set of non degenerate families of $\mathcal{M}_\mu$. Its proof follows the classical argument for Seifert matrices, which can be found for example in \cite[Section 3]{Lev69}, see also \cite[Theorem 3.4.4]{LivingstonNaik}.

\begin{proposition}
\label{prop:WittEquiv}
For each $\mu \geq 1$, Witt equivalence is an equivalence relation on the set of non degenerate families of $\mathcal{M}_\mu$.
\end{proposition}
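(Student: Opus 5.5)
Reflexivity and symmetry are direct verifications; transitivity needs a cancellation lemma, which is where the non degeneracy hypothesis is used. I expect the cancellation lemma (called Lemma \ref{lemma:WittCancellation} in the remark above) to be the main obstacle.

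\emph{Reflexivity.} For a family $A$ of size $n$, consider $A\oplus -A$. Apply the change of basis $C=\bsm I & 0\\ I & I\esm$, which is unimodular and independent of $\varepsilon$. The new basis consists of the diagonal vectors $e_k+f_k$ together with the $f_k$. On the diagonal vectors,
$$A^\varepsilon(x,y)-A^\varepsilon(x,y)=0$$
for every $\varepsilon$. So $C(A^\varepsilon\oplus -A^\varepsilon)C^{\T}$ has an $n\times n$ zero block in the top left corner for all $\varepsilon$ simultaneously. Hence $A\oplus -A$ is metabolic.

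\emph{Symmetry.} If $A\oplus -B$ is metabolic, then so is its negative $-A\oplus B$, since negating a family preserves a block of zeros. The permutation congruence swapping the two summands then shows that $B\oplus -A$ is metabolic.

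\emph{Transitivity.} Suppose $A\oplus -B$ and $B\oplus -C$ are metabolic. Their direct sum $A\oplus(-B\oplus B)\oplus -C$ is metabolic, since a direct sum of metabolic families is metabolic after reordering the basis. By reflexivity and symmetry, $-B\oplus B$ is metabolic. We would then like to cancel it and conclude that $A\oplus -C$ is metabolic. This requires a cancellation lemma:
\begin{quote}
if $X\oplus M$ is metabolic, with $M$ metabolic and $X\oplus M$ non degenerate, then $X$ is metabolic.
\end{quote}
Here $X=A\oplus -C$ and $M=-B\oplus B$. Non degeneracy of $X\oplus M$ must hold for the families involved. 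One subtlety is that the coefficients $a_\varepsilon$ for $A$, $B$ and $C$ may differ. This is handled by a genericity argument: the determinant of $\sum a_\varepsilon A^\varepsilon$ is a polynomial in the $a_\varepsilon$ that is not identically zero. A finite product of such nonzero polynomials is nonzero, so common coefficients $a_\varepsilon$ work for all three families at once, and hence for all the direct sums.

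\emph{Cancellation lemma.} I would follow Levine's argument. Work over $\Q$ with the form $\theta:=\sum a_\varepsilon(\cdot)^\varepsilon$ chosen non degenerate on everything. Note that $\theta$ is not necessarily symmetric or skew, so left and right orthogonals must be tracked separately.

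Let $H\subset \Z^{2n}$ be the half rank summand that is isotropic for all $\varepsilon$ in $X\oplus M$, and let $P\subset M$ be the metabolizer of $M$. Consider
$$H' := \{x\in X : \exists\, m\in P \text{ with } x+m\in H+P\}.$$
More concretely, $H'$ is the projection to $X$ of $(H+P)\cap(X\oplus P^{\perp})$, where $P^{\perp}$ denotes the orthogonal of $P$ with respect to all the forms.

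The steps are as follows.
\begin{enumerate}
\item Show that $H'$ is isotropic for every form $X^\varepsilon$. This uses that $H$ and $P$ are isotropic for every $\varepsilon$ and that the cross terms vanish.
\item Show that $\operatorname{rank} H'\geq \tfrac12\operatorname{rank}X$. This is a dimension count over $\Q$, in which the non degeneracy of $\theta$ gives $\dim V^{\perp_\theta}=\dim V'-\dim V$. This is the key place the hypothesis is needed, since the ordinary orthogonal for all $\varepsilon$ can be too big in degenerate cases.
\item Replace $H'$ by its saturation $(H'\otimes\Q)\cap \Z^{\operatorname{rank}X}$. This remains isotropic by bilinearity, is a direct summand, and has exactly half rank: isotropy for $\theta$ forces the rank to be at most half. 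Extending a basis of this summand to a basis of $\Z^{\operatorname{rank}X}$ gives the required block-zero form via a single unimodular $C$ independent of $\varepsilon$.
\end{enumerate}

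The obstacle is step 2. It is the rank count showing that the projected subspace is large enough, carried out for a possibly non-symmetric form $\theta$. I would first try to reduce to Levine's symmetric-type argument by replacing $\theta$ with $\theta+\theta^{\T}$ when this stays non degenerate. Otherwise I would use the identities $\dim V^{\perp_\theta,\mathrm{left}}=\dim V^{\perp_\theta,\mathrm{right}}=\dim-\dim V$, which hold for any nonsingular bilinear form.
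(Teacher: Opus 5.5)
Your proposal is correct. Reflexivity, symmetry and the reduction of transitivity to cancelling the metabolic family $B\oplus -B$ match the paper. Your symmetry step, which negates before permuting, is slightly more precise than the paper's claim that $A\oplus -B$ and $B\oplus -A$ are congruent.

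The genuine difference is in the cancellation lemma. The paper's Lemma~\ref{lemma:WittCancellation} follows Levine's integral argument. It row-reduces a $\Z$-basis of the metabolizer of $A\oplus N$ twice using Levine's ``Fact'' and isolates the basis vectors with vanishing $V_k$-component. It then shows that the rank $s$ of their projections to $V_A$ satisfies $s\geq m$, because a block $D$ of $\sum a_\varepsilon N^\varepsilon$ must have independent rows. This only needs the cancelled family $N$ to be non degenerate, so no common choice of coefficients for $A$, $B$, $C$ is required.

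Your subspace $H'=X\cap(H+P)=\pi_X(H\cap(X\oplus P))$ is precisely the span of the paper's $x''_{r+1},\dots,x''_{r+s}$, but you bound its rank coordinate-free over $\Q$. This costs the stronger hypothesis that $X\oplus M$ be non degenerate, which your genericity argument correctly supplies. In return you get a cleaner count, a metabolizer of exactly half rank, and an explicit saturation step that handles integrality.

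The step you flag as the obstacle closes without any symmetrization. For the non degenerate form $\theta$ one has ${}^{\perp}(U\cap W)={}^{\perp}U+{}^{\perp}W$, together with ${}^{\perp}H=H$ and ${}^{\perp}(X\oplus P)=P$. Now suppose $x\in X$ is left-orthogonal to $H'$. By block-diagonality it is left-orthogonal to $H\cap(X\oplus P)$, hence $x\in H+P$, i.e.\ $x\in H'$. Therefore $\dim X-\dim H'\leq\dim H'$.
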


\begin{proof}
Let $A = \{A^\varepsilon\}_{\varepsilon \in \{\pm\}^\mu}$, $B = \{B^\varepsilon\}_{\varepsilon \in \{\pm\}^\mu}$ and $C = \{C^\varepsilon\}_{\varepsilon \in \{\pm\}^\mu}$ be non degenerate families of matrices of size $l$, $m$ and $n$ respectively.

We start with reflexivity. It is clear that after adding the last $l$ rows and columns of $A_\varepsilon \oplus -A_\varepsilon$ to its first $l$ rows and columns respectively, we obtain a matrix with a square of zeros of size $l$ in the top left corner. As this operation is the result of a unimodular congruence and is independent of $\varepsilon$, reflexivity is proved.

Symmetry follows from the obvious facts that the matrices $A_\varepsilon\oplus -B_\varepsilon$ and $B_\varepsilon \oplus -A_\varepsilon$ are congruent, via a matrix independent of $\varepsilon$.

Transitivity is the key point. First, observe that the direct sum of metabolic families of matrices is metabolic. Assume then that $A$ and $B$, respectively $B$ and $C$, are Witt equivalent. It follows that the congruent familes of matrices 
$$(A\oplus -B) \oplus (B\oplus -C) \sim (A\oplus -C) \oplus (B\oplus -B)$$ are metabolic. As $B\oplus -B$ and $(A\oplus -B)\oplus(B\oplus -C)$ are metabolic, Lemma \ref{lemma:WittCancellation} below implies that the family $A\oplus-C$ is metabolic.
\end{proof}

The next lemma is an adaptation of Witt cancellation for families of non degenerate matrices. Its proof follows the original one from \cite[Lemma 1]{Lev69}, which is detailed in \cite[Lemma 3.4.5]{LivingstonNaik}.

\begin{lemma}
\label{lemma:WittCancellation}
Let $A$ and $N$ be families of matrices of $\mathcal{M}_\mu$. Suppose that $N$ and $A\oplus N$ are metabolic and that $N$ is non degenerate. Then $A$ is metabolic.
\end{lemma}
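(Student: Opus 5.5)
We follow Levine's argument \cite[Lemma 1]{Lev69}, as presented in \cite[Lemma 3.4.5]{LivingstonNaik}, and work first over $\Q$. There is one change. In the classical proof the form $A-A^{\T}$ is used to take orthogonal complements. Here we replace it by a single form $Q_N:=\sum_\varepsilon a_\varepsilon N^\varepsilon$ with $\det Q_N\neq 0$, as provided by Definition \ref{def:NonDegeneracy}. The point is that a subspace isotropic for every $N^\varepsilon$ is isotropic for $Q_N$, and $Q_N$, being non degenerate, gives a dimension count for orthogonal complements.

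Let $a$ and $n$ be the sizes of $A$ and $N$; both $a+n$ and $n$ are even. Since $A\oplus N$ and $N$ are metabolic, we get two subspaces:
\begin{itemize}
\item $M\subset \Q^a\oplus\Q^n$ of dimension $(a+n)/2$, on which every $(A\oplus N)^\varepsilon$ vanishes identically;
\item $M_2\subset\Q^n$ of dimension $n/2$, on which every $N^\varepsilon$ vanishes.
\end{itemize}
Since $M_2$ is isotropic for $Q_N$, it lies in its own (left) $Q_N$-orthogonal complement. That complement has dimension $n/2$ by non degeneracy, so the two are equal.

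We then set
$$W:=\pi_A\bigl(M\cap(\Q^a\oplus M_2)\bigr)\subset \Q^a,$$
where $\pi_A$ is the projection onto the first summand. The isotropy of $W$ is immediate. If $(u,v)$ and $(u',v')$ lie in $M\cap(\Q^a\oplus M_2)$, then
$$A^\varepsilon(u,u')=(A\oplus N)^\varepsilon\bigl((u,v),(u',v')\bigr)-N^\varepsilon(v,v')=0-0=0$$
for every $\varepsilon$. So $W$ is a simultaneous isotropic subspace for the whole family $A$.

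The main obstacle is the dimension estimate $\dim W\ge a/2$. We have
$$\dim W=\dim\bigl(M\cap(\Q^a\oplus M_2)\bigr)-\dim\bigl(M\cap(0\oplus M_2)\bigr).$$
To control both terms, we introduce
$$K:=\{v\in\Q^n : (0,v)\in M\}\qquad\text{and}\qquad P:=\pi_N(M).$$
Pairing $(0,v)\in M$ against an arbitrary element $(u,p)\in M$ shows that $K\subset P^{\perp}$ for every $N^\varepsilon$, hence for $Q_N$. Non degeneracy of $Q_N$ gives $\dim P^\perp=n-\dim P$. We also have $\dim P+\dim K=\dim M=(a+n)/2$, since $K$ is the kernel of $\pi_N$ restricted to $M$. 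Feeding these relations into the inclusion $M+(\Q^a\oplus M_2)\subset \Q^a\oplus(P+M_2)$ and into the Grassmann formulas, in the same way as in \cite{LivingstonNaik}, should give $\dim W\geq a/2$.

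If the count with $M_2$ does not close directly, we will first modify $M_2$. The plan is to replace it by $(M_2\cap P^\perp)+K'$ for a suitable subspace $K'\subset K$, chosen so that the result is still a simultaneous metabolizer of $N$ of dimension $n/2$. This is Levine's trick, and it is exactly where non degeneracy of $N$ is needed.

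The last step passes from $\Q$ to $\Z$. From $W$ isotropic with $\dim W\ge a/2$ we may take a subspace of dimension exactly $a/2$, which is still isotropic. Its intersection with $\Z^a$ is a primitive sublattice, hence a direct summand of rank $a/2$. Extending a basis of this summand to a basis of $\Z^a$ gives a unimodular matrix $C$, independent of $\varepsilon$. Then $CA^\varepsilon C^{\T}$ has an $a/2\times a/2$ zero block in the top left corner for every $\varepsilon$, so $A$ is metabolic in the sense of Definition \ref{def:Metab}.
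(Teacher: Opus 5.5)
Your strategy is the paper's, written without coordinates. Take $M_2$ to be the paper's $H$, with $A$ of size $2m$ and $N$ of size $2k$. Then your $W=\pi_A\bigl(M\cap(\Q^a\oplus M_2)\bigr)$ is exactly the span of $x''_{r+1},\dots,x''_{r+s}$, where $r=\dim\bigl((P+M_2)/M_2\bigr)$ and $m+k-r-s=\dim(K\cap M_2)$. Your isotropy check is correct, and so is your passage from $\Q$ to $\Z$ through a primitive sublattice.

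The gap is that the inequality $\dim W\ge a/2$ is never derived, and it is the entire content of the lemma. You say the listed relations ``should give'' it. You then hedge by replacing $M_2$ with $(M_2\cap P^\perp)+K'$ for an unspecified $K'$, which is a plan rather than an argument. What actually closes the count is non degeneracy of $Q_N$ applied to $P+M_2$, not to $P$ alone.

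Here is the missing step.
\begin{itemize}
\item The subspace $K\cap M_2$ is orthogonal to all of $P+M_2$, for every $N^\varepsilon$ and hence for $Q_N$. It is orthogonal to $P$ because $K\subset P^\perp$, and to $M_2$ because $M_2$ is isotropic.
\item Non degeneracy of $Q_N$ then gives $\dim(K\cap M_2)\le n-\dim(P+M_2)$.
\item The space $M\cap(\Q^a\oplus M_2)$ is the kernel of $M\to\Q^n/M_2$. That map has image $(P+M_2)/M_2$, so the kernel has dimension $\tfrac{a+n}{2}+\tfrac{n}{2}-\dim(P+M_2)$.
\item Subtracting the bound on $\dim(K\cap M_2)$ gives $\dim W\ge a/2$.
\end{itemize}

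This is precisely the paper's block-matrix step. There, the vectors $y''_{r+s+1},\dots,y''_{m+k}$ pair trivially with $H$ and with the $z'_j$, which forces $m+k-r-s\le k-r$.

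No modification of $M_2$ is needed. Only the isotropy of $M_2$ is used, not even $M_2=M_2^\perp$. With this step inserted, your proof is complete and coincides with the paper's.
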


\begin{proof}
Up to congruence, we assume that
$$N^\varepsilon = \begin{pmatrix}0&N_1^\varepsilon\\N_2^\varepsilon&N_3^\varepsilon\end{pmatrix}.$$
Since $A\oplus N$ and $N$ are metabolic, the matrices $A^\varepsilon$ and $N^\varepsilon$ must have even size, say $2m$ and $2k$. Let $B$ denote the direct sum $A\oplus N$. This represents a family of bilinear forms on a module $V_{A} \oplus V_{N} = \Z^{2m} \oplus \Z^{2k}$. As $N$ is metabolic, the second summand splits as $V_{N} = H \oplus V_k$, where $H = \Z^k$ is a metabolizer for $N^\varepsilon$ for all $\varepsilon$ and $V_k = \Z^k$. Using these notations, the metabolic form $B^\varepsilon$ is represented on $V:=V_A \oplus H \oplus V_k$  by the matrix
$$B^\varepsilon = \begin{pmatrix}A^\varepsilon&0&0\\0&0&N_1^\varepsilon\\0&N_2^\varepsilon&N_3^\varepsilon\end{pmatrix}.$$

Since $B$ itself is metabolic, the matrices $B^\varepsilon$ admit a metabolizer independent of $\varepsilon$. For $1 \leq i \leq n := m+k$, let $\alpha_i = (x_i, y_i, z_i) \in V_A \oplus H \oplus V_k$ form a $\Z$-basis for this metabolizer. In order to consider manipulations on these vectors, it may be helpful to arrange them as the columns of an integer $2n \times n$ matrix
$$\alpha = \begin{pmatrix} x_1 & \ldots & x_n \\y_1 & \ldots & y_n \\ z_1 & \ldots & z_n \end{pmatrix}.$$
Note that as these column vectors form a basis for a metabolizer, they are linearly independent, and their family can be completed in a basis of $V$. We also emphasize the fact that all of these vectors are independent of $\varepsilon$. By the definitions, we have $B^\varepsilon(\alpha_i, \alpha_j) = N^\varepsilon((y_i,0), (y_j,0)) = 0$ for all $1 \leq i,j \leq n$ and all $\varepsilon$.

Our next goal is to simplify the matrix $\alpha$. For this, we use the following fact, which appears as statement $(1)$ in \cite[Proof of Lemma 1]{Lev69}. 

\medskip
\textit{Fact.}
If $\gamma_1, \ldots, \gamma_n \in \Z^m$, then there exists a non singular matrix $P = (p_{ij})$ such that if $\gamma_i' = \sum_j p_{ij}\gamma_j$, then $\gamma_1', \ldots, \gamma_r'$ are linearly independent, while $\gamma_{r+1}', \ldots, \gamma_n' = 0$ for some $r$.
\medskip

Using this fact for $z_1,  \ldots, z_n \in \Z^k$, and multiplying $\alpha$ on the right by a corresponding matrix $P$, we get a matrix
$$\alpha' = \begin{pmatrix}x_1' & \ldots & x_r' &x'_{r+1} & \ldots & x_n'\\y_1' & \ldots & y_r' &y'_{r+1} & \ldots & y_n'\\z_1' & \ldots & z_r' & 0 & \ldots & 0\end{pmatrix},$$
where the vectors $z_1', \ldots, z_r' \in V_k$ are linearly independent, and $r \leq k$ is a positive integer. As the matrix $P$ is unimodular, the columns of $\alpha'$ still form a $\Z$-basis of a metabolizer of $B^\varepsilon$.

We then use the fact a second time on the vectors $x_{r+1}', \ldots, x_n'$, yielding a matrix
$$\alpha'' = \begin{pmatrix}x_1' & \ldots & x_r' &x''_{r+1} & \ldots & x''_{r+s} & 0 & \ldots & 0\\y_1' & \ldots & y_r' & y''_{r+1} & \ldots & y''_{r+s} & y''_{r+s+1} & \ldots & y''_n \\ z'_1 & \ldots & z'_r & 0 & \ldots & 0 & 0 & \ldots & 0\end{pmatrix},$$
where the vectors $x''_{r+1}, \ldots, x''_{r+s} \in V_A$ are linearly independent, and $s \leq n-r$ is a positive integer. Again, as this operation involves a unimodular matrix, the columns of $\alpha''$ still form a $\Z$-basis of a metabolizer of $B^\varepsilon$.

We denote by $\alpha''_1, \ldots, \alpha''_n$ the columns of $\alpha''$. As these span a metabolizer for $B^\varepsilon$, for $r+1 \leq i,j \leq r+s$, we have
$$0 = B^\varepsilon(\alpha''_i, \alpha''_j) = A^\varepsilon(x''_i, x''_j) + N^\varepsilon((y''_i,0), (y''_j,0)) = A^\varepsilon(x''_i, x''_j).$$
Thus, $x''_{r+1}, \ldots, x''_{r+s}$ span a subspace of $V_A$ on which $A^\varepsilon$ vanishes for all $\varepsilon$. To prove the lemma, it is therefore sufficient to show that $s \geq m$.
\smallskip

For this, observe that for $r+s+1 \leq i \leq n$, the vectors $\alpha''_i$ satisfy $\alpha''_i = (0,y''_i,0)$.  As the family of the $\alpha''_i$ can be completed in a $\Z$-basis of $V = V_A \oplus H \oplus V_k$, the set of vectors $\{y''_{r+s+1}, \ldots, y''_n\} \subset H$ can be completed in a $\Z$-basis of $H$. Similarly, the set $\{z'_1, \ldots, z'_r\}$ can be completed into a $\Z$-basis of $V_k$.

Furthermore, for $i\geq r+s+1$ and all $j$, independently of $\varepsilon$ we have 
$$0 = B^\varepsilon(\alpha_i'', \alpha_j'') = A^\varepsilon(0,\ast) + N^\varepsilon((y''_i,0), (\ast,z'_j)) = N^\varepsilon((y''_i,0),(0,z'_j)).$$
In particular, the non degenerate form $\sum_{\varepsilon \in I} a_\varepsilon N^\varepsilon$ vanishes on these pairs.

Therefore, with respect to the basis of $H \oplus V_k$ obtained by completing the families $\{y''_{r+s+1}, \ldots, y''_n\}$ and $\{z'_1, \ldots, z'_r\}$, the non degenerate form $\sum_{\varepsilon \in I} a_\varepsilon N^\varepsilon$ is represented by a block matrix of the following shape
$$\begin{pmatrix}
0&0&0&D\\
0&0&\ast&\ast\\
\ast&\ast&\ast&\ast\\
\ast&\ast&\ast&\ast
\end{pmatrix},$$
with $D$ a matrix of size $(n-r-s) \times (k-r)$. As this block matrix represents a non degenerate form, its rows must be linearly independent. Thus the rows of $D$ must be linearly independent. It follows that $n-r-s \leq k-r$, which is equivalent to $m \leq s$.
\end{proof}

\subsection{Algebraic concordance of families of matrices}
\label{sub:AlgebraicConcordanceOfMatrices}
In this section, we show that the property of having non trivial Alexander polynomial is stable by generalized S-equivalence, and implies non degeneracy. After providing examples, we prove Theorem \ref{thm:IntroAlgConcLinks}. We conclude with a couple of remarks on Definition~\ref{def:AlgConcGroup}
\medskip

Before we proceed with algebraic concordance of families of matrices, let us recall that in the case of knots, classical algebraic concordance does not depend on the choice of a Seifert surface. This is because Seifert matrices coming from Seifert surfaces for the same knot are Witt equivalent. In other terms, classical S-equivalence implies Witt equivalence. We provide an example showing that the analogue does not hold for colored links, exhibiting two generalized Seifert matrices for the Hopf link which are not Witt equivalent. 

\begin{example}
\label{example:Hopf}
Let $A = \{A^\varepsilon\}_{\varepsilon \in \{\pm\}^2}$ denote a family of generalized Seifert matrices corresponding to the C-complex for the Hopf link shown in Figure \ref{fig:Counterexample} (an explicit family is given in \cite[Example 1.10]{CCS26}). A direct computation shows that the matrix $A^{--} + (A^{--})^{\T}$ is non degenerate, and has non zero signature. It follows that it is not metabolic, and therefore the matrix $A^{--}$ is not metabolic.

Note that the existence of a contractible C-complex for the Hopf link implies that the family $E$ of empty matrices is also a family of generalized Seifert matrices for the Hopf link. As $A^{--}$ is not metabolic, the direct sum $A \oplus -E = A$ is not metabolic, thus the families $A$ and $E$ are not Witt equivalent.

\begin{figure}[h]
\centering
\begin{overpic}[width=6cm]{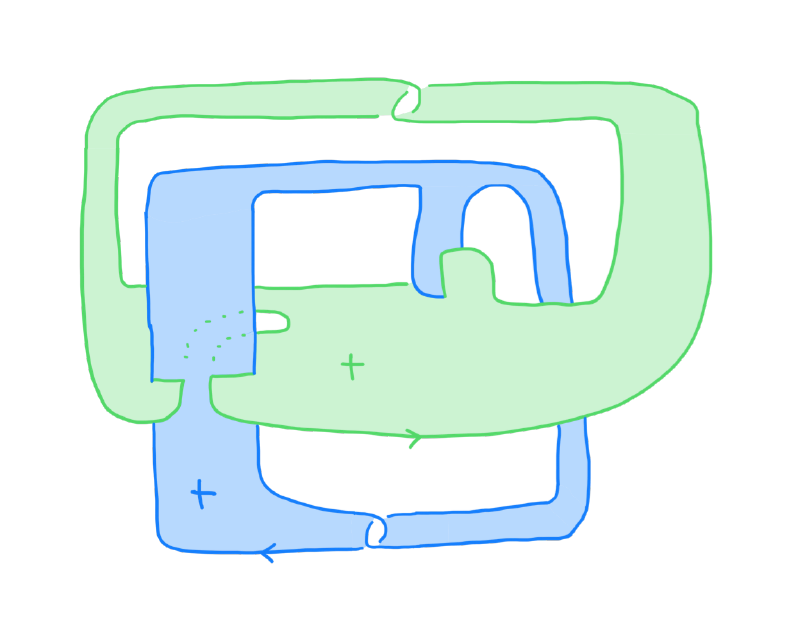}\end{overpic}
\caption{A C-complex for the Hopf link. Any generalized Seifert matrix $A^{--}$ representing the form $\alpha^{--}$ for this C-complex is not metabolic.}
\label{fig:Counterexample}
\end{figure}
\end{example}

Even though families of generalized Seifert matrices for a same colored link are not necessarily Witt equivalent, Lemma \ref{lemma:SEquivalence} ensures that they are generalized S-equivalent. Thus, in order to get an equivalence relation that does not depend on the C-complex, we will include generalized S-equivalence in the definition of algebraic concordance. However, it is not clear right now if non degeneracy of a family is stable by generalized S-equivalence or not.

To circumvent this issue, we restrict to the set $\mathcal{M}_\mu^\ast$ of families of matrices with non trivial Alexander polynomial (recall Definition \ref{def:MAP}). Before we show that this condition implies non degeneracy and only depends on the generalized S-equivalence class of a family of matrices, we make a couple of useful remarks.

\begin{remark}
\label{rk:APnon0Knots}
\begin{enumerate}[(i)]
\item The Alexander polynomial is multiplicative with respect to the direct sum. Indeed, the equality $\Delta_{A \oplus B}(t) = \Delta_A(t) \Delta_B(t)$ follows from the definition.
\item The Alexander polynomial is a symmetric element of $\Lambda$, in the sense that $\Delta_A(t^{-1}) \,\dot=\, \Delta_A(t)$. This is immediate from the definition.
\end{enumerate}
\end{remark}

The next lemma allows to apply Witt cancellation to families of matrices with non trivial Alexander polynomial.

\begin{lemma}
\label{lemma:APNon0}
A family of matrices with non trivial Alexander polynomial is non degenerate. 
\end{lemma}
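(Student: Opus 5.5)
The proof will be a specialization argument. By Definition \ref{def:MAP}, the Alexander polynomial $\Delta_A(t)$ is the determinant of the matrix
$$A(t) = \sum_{\varepsilon \in \{\pm\}^\mu}\varepsilon_1\cdots\varepsilon_\mu\, t_1^{\frac{1-\varepsilon_1}{2}}\cdots t_\mu^{\frac{1-\varepsilon_\mu}{2}}A^\varepsilon.$$
For fixed values of the $t_i$, this matrix is a linear combination of the $A^\varepsilon$. Specializing each variable $t_i$ to an integer $n_i$ therefore gives integral coefficients
$$a_\varepsilon := \varepsilon_1\cdots\varepsilon_\mu\, n_1^{\frac{1-\varepsilon_1}{2}}\cdots n_\mu^{\frac{1-\varepsilon_\mu}{2}} \in \Z,$$
and the matrix $\sum_\varepsilon a_\varepsilon A^\varepsilon$ equals $A(n_1,\ldots,n_\mu)$. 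Its determinant is $\Delta_A(n_1,\ldots,n_\mu)$, because the determinant is a polynomial in the entries and commutes with evaluation. It therefore suffices to find integers $n_1,\ldots,n_\mu$ at which $\Delta_A$ does not vanish; these yield the integers required in Definition \ref{def:NonDegeneracy}.

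There are no negative powers of the $t_i$ in $A(t)$: each entry is a polynomial of degree at most one in each variable. So $\Delta_A$ is an honest polynomial in $\Z[t_1,\ldots,t_\mu]$, and evaluating it at any integers, including $0$, is legitimate. (Even without this observation, any Laurent polynomial can be multiplied by a monomial to clear denominators, and we could restrict to nonzero integers.)

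The remaining step is the standard fact that a nonzero polynomial in $\mu$ variables over $\Z$ (or over $\Q$) does not vanish identically on $\Z^\mu$. I would prove it by induction on $\mu$. For $\mu=1$, a nonzero polynomial has finitely many roots, so some integer is not a root. For the inductive step, write $\Delta_A = \sum_k c_k(t_1,\ldots,t_{\mu-1})\, t_\mu^k$ with some coefficient $c_k \neq 0$. By induction, pick integers $n_1,\ldots,n_{\mu-1}$ with $c_k(n_1,\ldots,n_{\mu-1}) \neq 0$. The resulting one-variable polynomial in $t_\mu$ is then nonzero, and the case $\mu=1$ gives $n_\mu$.

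No step is a real obstacle; the only care needed is in making explicit that the specialization $t_i \mapsto n_i$ turns $A(t)$ into exactly an integral linear combination $\sum_\varepsilon a_\varepsilon A^\varepsilon$ of the kind allowed in Definition \ref{def:NonDegeneracy}. By Remark \ref{rk:NonDegenerate} (ii), rational coefficients would also suffice.
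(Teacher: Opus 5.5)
Your proof is correct and is essentially the paper's argument: you specialize the variables $t_i$ at a point where $\Delta_A$ does not vanish and read off the coefficients $a_\varepsilon$. The only difference is that the paper picks rational values and appeals to Remark~\ref{rk:NonDegenerate}~(ii), whereas you pick integers directly (legitimate since $A(t)$ has no negative powers) and also prove that a nonzero polynomial does not vanish identically on $\Z^\mu$.
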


\begin{proof}
Let $A = \{A^\varepsilon\vert\; \varepsilon \in \{\pm\}^\mu\}$.
If this family has non trivial Alexander polynomial, the polynomial 
$$P(t_1, \ldots, t_\mu) := \det{\sum_{\varepsilon \in \{\pm\}^\mu}\varepsilon_1 \cdots \varepsilon_\mu t_1^{\frac{1-\varepsilon_1}{2}} \cdots t_\mu^{\frac{1-\varepsilon_\mu}{2}}A^\varepsilon}$$
is non trivial. Therefore, there exists rational numbers $x_1, \ldots, x_\mu$ such that $P(x_1, \ldots, x_\mu)$ is nonzero. Setting $a_\varepsilon = \varepsilon_1 \cdots \varepsilon_\mu x_1^{\frac{1-\varepsilon_1}{2}} \cdots x_\mu^{\frac{1-\varepsilon_\mu}{2}}$ we get $$\det{\sum_{\varepsilon \in \{\pm\}^\mu} a_\varepsilon A^\varepsilon} = P(x_1, \ldots, x_\mu) \neq 0.$$
It then follows from the second item of Remark \ref{rk:NonDegenerate} that the family $A$ is non degenerate.
\end{proof}

We now show that the property of having non trivial Alexander polynomial is invariant by generalized S-equivalence. Before we do so, let us recall that the Alexander polynomial of a link is well defined only up to multiplication by units of $\Lambda$. In our case, when dealing only with generalized Seifert matrices and not with C-complexes, we loose some information. Indeed, in order to compute the Alexander polynomial of a colored link, one needs the generalized Seifert matrices together with an information concerning the genus of the C-complex, and of its underlying colored surfaces, see \cite{Cim04}. Even though this information is encoded in the size of the matrix for $\mu = 1$, this is not the case for $\mu \geq 2$. In this case, without this information, the generalized Seifert matrices allow to compute the Alexander polynomial only up to multiplication by elements of the form $(t_i-1)$. This reflects through generalized S-equivalence in the following sense : for $\mu \geq 2$, the Alexander polynomial of a family of matrices is invariant by generalized S-equivalence if and only if it is considered up to multiplication by units of $\Lambda_S$. Indeed, this follows readily from the explicit formula for the main result of \cite{Cim04}.

\begin{lemma}
\label{lemma:APInvariance}
If $A$ and $B$ in $\mathcal{M}_\mu$ are related by generalized S-equivalence, then $\Delta_A(t)\, \ddot = \, \Delta_B(t)$. In particular, $A$ has non trivial Alexander polynomial if and only if $B$ has, and in this case, the direct sum $A \oplus B$ also has non trivial Alexander polynomial.
\end{lemma}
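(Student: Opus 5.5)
It suffices to treat a single elementary move, since generalized S-equivalence is generated by unimodular congruences and the transformations (T1)--(T4) and their inverses. The last assertion then follows from the first together with the multiplicativity in Remark~\ref{rk:APnon0Knots}(i): $\Delta_{A\oplus B}=\Delta_A\Delta_B$, and $\Lambda$ is a domain.

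Write $A(t)=\sum_\varepsilon \varepsilon_1\cdots\varepsilon_\mu\, t^{(1-\varepsilon)/2}A^\varepsilon$. The key observation is that $A\mapsto A(t)$ is linear in the family and compatible with congruence. If $CA^\varepsilon C^{\T}=B^\varepsilon$ for all $\varepsilon$ with $C$ unimodular, then $B(t)=CA(t)C^{\T}$, so $\Delta_B=\det(C)^2\Delta_A=\Delta_A$. For the transformations (T1)--(T4), I will compute $B(t)$ blockwise, using the elementary identity
$$\sum_{\varepsilon}\varepsilon_1\cdots\varepsilon_\mu\, t^{(1-\varepsilon)/2}\prod_{i\in I}\delta(\varepsilon_i,\sigma_i)\;=\;\prod_{i\in I}\sigma_i\, t_i^{(1-\sigma_i)/2}\prod_{i\notin I}(1-t_i),$$
which holds for any set $I$ of colors. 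In particular, a term independent of $\varepsilon$ (or depending only on a proper subset of colors, such as $\ell$ or the $\delta$-products) contributes $0$ if $I\neq\{1,\dots,\mu\}$ and $\pm t^{\ast}\prod_{i\notin I}(1-t_i)$ otherwise.

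\textbf{(T1), (T2).} The new rows and columns of $B(t)$ have the form
$$\begin{pmatrix}A(t)&0&\xi(t)\\0&0&p\\ \xi'(t)^{\T}&q&\ell(t)\end{pmatrix},$$
where $p$ and $q$ are, by the identity above, monomials times $\prod_{m\neq i}(1-t_m)$ (resp.\ $\prod_{m\neq i,j}(1-t_m)$). Expanding along the row containing the single entry $p$, and then along the column containing the single entry $q$, gives $\Delta_B=-pq\,\Delta_A$. Since $pq$ is a unit of $\Lambda_S$ (up to sign and a monomial), we get $\Delta_B\,\ddot=\,\Delta_A$.

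\textbf{(T3), (T4).} Both $A$ and $B$ have the same three-block shape. In each, the middle row has entries $c(t)\chi(z)^{\T}$, $0$ and $p(t)$, and the middle column is symmetric to it. The coefficients $c$ and $p$ are again monomials times products of factors $(1-t_m)$. My plan is to use the row $z$ and the last row to clear entries by row and column operations over $\Lambda_S$, or alternatively to expand the determinant along the middle row and middle column. The determinant then becomes a combination of minors of the $A(t)$-block bordered by $\xi$. Comparing the two expansions should show that $\Delta_A$ and $\Delta_B$ differ by a unit of $\Lambda_S$.

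I expect this comparison to be the main obstacle. The correction terms (the $-\delta\delta\chi(z)$ in $\xi$, the change from $\ell$ to $\ell'$, and $n\chi(z)$) have to cancel exactly after the change of the coefficient $c$ from a $\sigma_i$-type to a $\sigma_k$-type factor. The cleanest route I would try first avoids the expansion altogether. The idea is to exhibit $B(t)=P\,A(t)\,Q$ with $P,Q$ elementary over $\Lambda_S$, obtained by adding a $\Lambda_S$-multiple of the middle row and column (which after inverting $(1-t_m)$ carry a unit entry) to row and column $z$ and to the last row and column. If that fails, I would fall back on the observation made just before the lemma: by Lemma~\ref{lemma:SEquivalence} and the explicit formula of \cite{Cim04}, for families realized by C-complexes, $\Delta_A$ equals the link polynomial up to powers of $(t_i-1)$. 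However, this route only handles geometric families, so the direct computation is the intended proof.
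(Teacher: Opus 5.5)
Your reduction to single moves, the congruence case, the summation identity, and the (T1)/(T2) computation $\Delta_B=-pq\,\Delta_A$ are all correct. The sentence right after the identity is wrong, though. An $\varepsilon$-independent integer $c$ contributes $c\prod_m(1-t_m)$ (this is the case $I=\emptyset$), not $0$. This matters in (T4), where $n$ enters the transforms of $\xi$, $\xi(-\varepsilon)$ and $\ell$ as $n\prod_m(1-t_m)$.

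The genuine gap is (T3)/(T4), which is where the whole content of the lemma lies. There you only describe a strategy and say the cancellation ``should'' occur. Your assessment of the fallback is also off. The paper's proof is exactly a citation, but of the \emph{matrix-level} computations of \cite{Cim04} (T1--T3) and \cite{DMO21} (T4). These only track $\det A(t)$ under the moves, so they apply to every family in $\mathcal{M}_\mu$, not only geometric ones, and their normalising factors are units of $\Lambda_S$.

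Your direct route does close; the missing observation is as follows. Put $u_m=1-t_m$, $e_m(\sigma)=\sigma t_m^{(1-\sigma)/2}$, and let $\xi,\xi',\ell$ denote the transforms of $\xi(\varepsilon),\xi(-\varepsilon),\ell(\varepsilon)$.

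In (T3), the middle row of $A(t)$ is $(a\chi(z)^{\T},0,b)$, with $a=e_i(\sigma_i)\prod_{m\neq i}u_m$ and $b=e_i(\sigma_i)e_j(\sigma_j)\prod_{m\neq i,j}u_m$. Hence $a=\lambda b$ with $\lambda=u_j/e_j(\sigma_j)\in\Lambda$. Likewise the middle column is $(c\chi(z),0,d)^{\T}$ with $c=\lambda' d$ and $\lambda'=u_j/e_j(-\sigma_j)$. The key point is that $B(t)$, and both sign versions of (T4), have the \emph{same} $\lambda,\lambda'$, because $j$ is the color common to both sides.

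Subtract $\lambda$ times the last column from column $z$, and $\lambda'$ times the last row from row $z$. These operations are unimodular over $\Lambda$. Expanding then gives
\[
\det A(t)=-bd\,\det\bigl(A_0(t)-\lambda\,\xi\chi(z)^{\T}-\lambda'\chi(z)\xi'^{\T}+\lambda\lambda'\ell\,\chi(z)\chi(z)^{\T}\bigr),
\]
and the same formula holds for $B(t)$, with $b',d'$ and the modified $\xi,\xi',\ell$.

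The two inner matrices differ only in the $(z,z)$ entry, and that difference vanishes:
\begin{itemize}
\item In (T3), the terms $\lambda e_i(\sigma_i)e_k(-\sigma_k)\prod_{m\neq i,k}u_m$ and $\lambda' e_i(-\sigma_i)e_k(\sigma_k)\prod_{m\neq i,k}u_m$ coming from $\xi,\xi'$ cancel term by term against $\lambda\lambda'(\ell'-\ell)$. This uses $\prod_{m\neq i,k}u_m=u_j\prod_{m\neq i,j,k}u_m$.
\item In (T4), the difference is $n\prod_m u_m\,(\lambda\lambda'-\lambda-\lambda')=0$, since $e_j(\sigma_j)+e_j(-\sigma_j)=u_j$.
\end{itemize}
Hence $\Delta_B/\Delta_A=b'd'/(bd)$. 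This equals $t_k(1-t_i)^2/\bigl(t_i(1-t_k)^2\bigr)$ for (T3) and $1$ for (T4), both units of $\Lambda_S$.

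Your alternative $B(t)=PA(t)Q$ also works. However, in (T3) and the lower-sign (T4) it additionally needs the middle row and column rescaled by units of $\Lambda_S$, since those entries of $A(t)$ and $B(t)$ differ.

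Once completed, your argument is a self-contained proof that exhibits the units explicitly. The paper instead imports the invariance from the literature.
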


\begin{proof}
The second part of the lemma is immediate from the first one, and the last part follows from the multiplicativity of the Alexander polynomial, see Remark \ref{rk:APnon0Knots} (i).

To prove the first part, one needs to check that the Alexander polynomial is unchanged by the four moves of generalized S-equivalence. This has been done by Cimasoni in \cite{Cim04} for the first three moves and completed by \cite{DMO21} for the last move. Note that they work with a normalized version of the Alexander polynomial including some genus information, but this only involves factor $(t_i-t_i^{-1})$, which are invertible elements of $\Lambda_S$. Hence, their computations show that under generalized S-equivalence, the Alexander polynomial only changes by invertible elements of $\Lambda_S$. Details are given in \cite{GSThesis}.
\end{proof}

\begin{remark}
\begin{enumerate}[(i)]
\item Even though the condition of having non trivial Alexander polynomial seems to be a more restrictive condition than being a non degenerate family, we have at the moment no examples of such families coming from links, i.e. a family of generalized Seifert matrices which is non degenerate but does have trivial Alexander polynomial.
\item As the family of links having non degenerate generalized Seifert matrices is \emph{a priori} bigger than the family of links having non trivial Alexander polynomial, it might be tempting to work with non degenerate families of matrices, instead of families of matrices with non trivial Alexander polynomial. We give two reasons why this is in fact not straightforward. First, as already mentioned, it is not clear that the property of being non degenerate for a family of matrices is stable by S-equivalence. Therefore, this is a property that might depend on the choice of a C-complex. Second, we use the fact that we consider links with non trivial Alexander polynomial in the proof of Theorem \ref{prop:IntroAlgConcLinks}.
\end{enumerate}
\end{remark}

Before we proceed with the proof of Theorem \ref{thm:IntroAlgConcLinks}, we show that there exist links admitting degenerate families of generalized Seifert matrices, by providing two examples. Note that both have trivial Alexander polynomial, which is coherent with Lemma \ref{lemma:APNon0}.

\begin{example}
\label{example:LinkDegenerateFamily}
\begin{enumerate}[(i)]
\item For any integer $c>1$, and any $\mu \leq c$, the $\mu$-colored $c$-component unlink admits a degenerate family of generalized Seifert matrices. Indeed, the C-complex shown in Figure \ref{fig:DegenerateUnlink} shows that the family of $c-1 \times c-1$ zero matrices arises as a family of generalized Seifert matrices for the unlink. This family is clearly degenerate.

\begin{figure}[h]
\centering
\begin{overpic}[width=12cm]{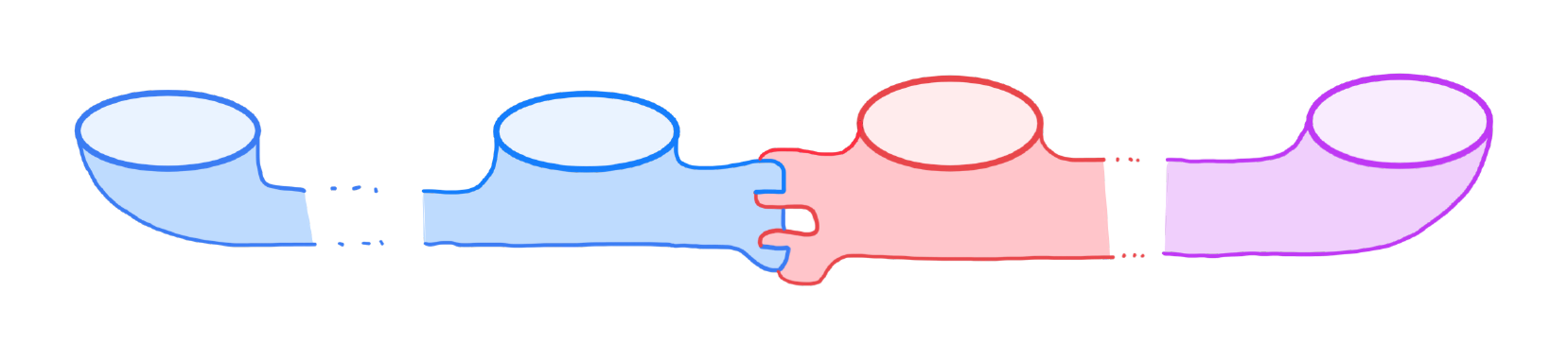}\end{overpic}
\caption{A C-complex for the $c$-component, $\mu$-colored unlink, yielding the family of zero matrices as generalized Seifert matrices.}
\label{fig:DegenerateUnlink}
\end{figure}

\item As a non trivial example, the link $L11n244$, shown in the left of Figure \ref{fig:DegenerateNonTrivial} admits a degenerate family of matrices as a family of generalized Seifert matrices. To see this, consider the C-complex shown in the right of Figure \ref{fig:DegenerateNonTrivial}, with a basis of the first homology group depicted therein. The corresponding generalized Seifert matrices are given by 
$$A^{++} = \begin{pmatrix}1&0&1\\0&1&0\\1&0&1\end{pmatrix} \text{   and   } A^{+-} = \begin{pmatrix}1&1&1\\ 0&1&0\\1&1&1\end{pmatrix},$$
and their transpose. Note that each of these four matrices has linearly dependent first and last column. It follows that any linear combination of these four matrices also has linearly dependent first and last column, and the family is therefore degenerate.

\begin{figure}[h]
\centering
\begin{overpic}[width=12cm]{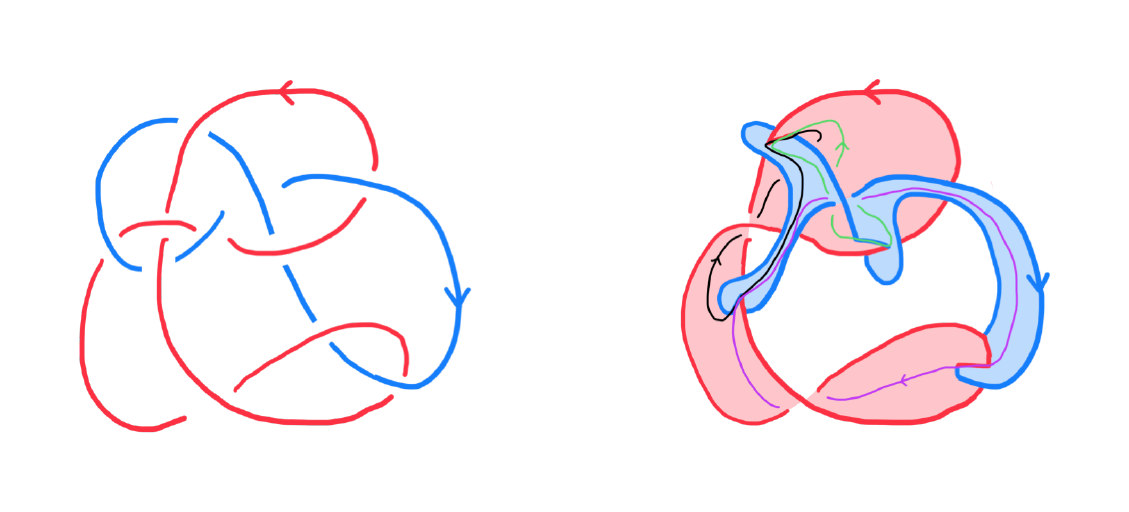}\end{overpic}
\caption{The link $L11n244$, shown on the left admits the C-complex shown on the right. Computing the associated generalized Seifert matrices with the three curves shown therein yields a degenerate family of matrices.}
\label{fig:DegenerateNonTrivial}
\end{figure}

\end{enumerate}
\end{example}

We now prove Theorem \ref{thm:IntroAlgConcLinks}.

\begin{proof}
The fact that algebraic concordance is an equivalence relation is immediate since Witt equivalence and generalized S-equivalence are equivalence relations.

Furthermore, direct sum yields a well defined operation on the quotient, as it preserves generalized S-equivalence and Witt equivalence. Indeed, if $A\sim A'$ and $B \sim B'$ are Witt equivalent (respectively generalized S-equivalent) families of matrices, then $A\oplus B$ and $A' \oplus B'$ are Witt equivalent (respectively generalized S-equivalent). This operation is closed as direct sum preserves the property of having non trivial Alexander polynomial, by Lemma \ref{lemma:APNon0}.

It remains to verify that this well defined operation yields an abelian group structure. Associativity and commutativity are immediate as direct sum is associative and commutative up to congruence (and congruent matrices are generalized S-equivalent).

The zero element is given by the class of metabolic families of matrices, which contains the family of empty matrices. Indeed, any metabolic family of matrices is Witt equivalent to the family of empty matrices, and taking direct sum with a family of empty matrices does not change an element.

Finally, the inverse of a class of family of matrices $A$ is the class of the family $-A$, as $A \oplus -A$ is metabolic (this follows from the reflexivity of Witt equivalence, see Proposition \ref{prop:WittEquiv}). It is easy to check that $-A$ has non trivial Alexander polynomial if $A$ has, hence the inverse belongs to the group.
\end{proof}

Recall from Definition \ref{def:AlgConcGroup} that the group of Theorem \ref{thm:IntroAlgConcLinks} is called the $\mu$-colored algebraic concordance group, and is denoted by $\mathcal{C}_\mu^\alg$. We conclude this section with a couple of remarks on this definition.

\begin{remark}
\label{rk:AlgConcGroupBig}
\begin{enumerate}[(i)]
\item In the classical case of knots, the algebraic concordance group consists of the classes of all Seifert matrices of knots, i.e. integral square matrices $A$ satisfying $\det(A~-~A^{\T})~=~1$. Our group $\mathcal{C}_\mu^\alg$ is a priori \emph{bigger} in the sense that it contains the classes of all families of matrices in $\mathcal{M}_\mu^\ast$, some of which might not be realizable as families of generalized Seifert matrices. It is not difficult to see that, since we consider colored links, the classes of families of generalized Seifert matrices form a subgroup of $\mathcal{C}_\mu^\alg$. This is one of our main motivations for the consideration of colored links. However, elements of this subgroup seem hard to characterize algebraically, as we have no criterion such as in the case of knots. Therefore, the consideration of $\mathcal{C}_\mu^\alg$ seems the most natural. Let us also mention that all concordance invariants of colored links we will consider in Section \ref{sec:AlgConcInvariants} will straightforwardly extend to families of matrices in $\mathcal{M}_\mu^\ast$.
\item Note that the group $\mathcal{C}_1^{\alg}$ contains the classes of all classical Seifert matrices of knots, but also the classes of all Seifert matrices of one colored links with non trivial Alexander polynomial. By Remark \ref{rk:AlgConcKnots2} below, which ensures that two pairs of Seifert matrices coming from knots are algebraically concordant as pairs if and only if the underlying matrices are algebraically concordant, the inclusion of pairs of Seifert matrices of knots into $\mathcal{M}^{\ast}_1$ induces an injective homomorphism from the algebraic concordance group of knots into $\mathcal{C}_1^{\alg}.$ This homomorphism is not surjective by Remark \ref{rk:AlgConcGroupMu=1}.
\end{enumerate}
\end{remark}

\section{Smoothly concordant links are algebraically concordant}
\label{sub:AlgebraicConcordanceOfLinks}
In this section, we define algebraic concordance of colored links and prove Theorem \ref{prop:IntroAlgConcLinks}. This leads to a well defined map from the set of $\mu$-colored links up to smooth concordance into the $\mu$-colored algebraic concordance group.
\medskip

\begin{definition}
\label{def:AlgConcLinks}
Two colored links with non trivial Alexander polynomial are \emph{algebraically concordant} if any two of their families of generalized Seifert matrices are algebraically concordant.
\end{definition}

\begin{remark}
\label{rk:AlgConcKnots2}
Two knots are algebraically concordant as $1$-colored links if and only if any two pairs of their Seifert matrices are algebraically concordant. By Remark \ref{rk:AlgConcKnots1}, this is the case if and only if any two of their Seifert matrices are algebraically concordant, which is the classical definition of algebraic concordance of knots. Nevertheless, the case $\mu = 1$ is a more general definition than classical algebraic concordance of knots, as it allows the consideration of $1$-colored links with more than one component.
\end{remark}

In the classical case of knots, this terminology is motivated by the fact that concordant knots are algebraically concordant. We now prove that the analogue holds for colored links, at least in the smooth category. We say that that two colored links are (smoothly) \emph{concordant} if they are (smoothly) concordant as links via a concordance preserving the colors. With Definition \ref{def:AlgConcLinks}, Theorem \ref{prop:IntroAlgConcLinks} reformulates as follows.

\begin{theorem}
\label{prop:AlgConcLinks}
Smoothly concordant colored links with non trivial Alexander polynomial are algebraically concordant.
\end{theorem}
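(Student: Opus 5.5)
We follow the classical strategy for knots (Levine, see \cite[Chapter 8]{Lickorish}): build a C-complex for $L \sqcup -L'$ that is the "boundary" of something in $B^4$, and show its generalized Seifert matrices are metabolic. The plan has four steps.

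\textbf{Step 1: reduction to a split link.} Let $L$ and $L'$ be smoothly concordant $\mu$-colored links with non trivial Alexander polynomial. Write $\overline{L'}$ for the mirror image of $L'$ with reversed orientation, keeping colors.

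Take C-complexes $F$ for $L$ and $F'$ for $\overline{L'}$, placed in disjoint balls. Join them by a tube, which is a handle attachment on one color. By a (T1)-type computation, the generalized Seifert matrices of the resulting C-complex for $L\sqcup \overline{L'}$ are generalized S-equivalent to $A_F \oplus -A_{F'}$. Here we use that mirroring and reversing orientation sends $\alpha^\varepsilon$ to $-\alpha^{\varepsilon}$ up to transposition, and transposition is harmless since $A^{-\varepsilon}=(A^\varepsilon)^{\T}$.

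By Lemma \ref{lemma:SEquivalence} and Lemma \ref{lemma:APInvariance}, every family involved lies in $\mathcal{M}_\mu^\ast$. It therefore suffices to show that \emph{some} C-complex for a link obtained from $L\sqcup \overline{L'}$ by band moves along the concordance has a metabolic family, while staying in $\mathcal{M}_\mu^\ast$ throughout.

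\textbf{Step 2: the 4-dimensional setup.} The concordance annuli $C_1,\dots,C_c \subset S^3\times[0,1]$ can be turned into disjoint slice discs for a band sum $J$ of $L$ with $\overline{L'}$. For each color $i$, the sublink of color $i$ bounds a smooth oriented 3-manifold $M_i \subset B^4$ whose boundary is $F_i \cup (\text{discs})$.

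Working in the smooth category, we put the $M_i$ in general position. Pairwise they meet in surfaces bounding the clasp arcs, and triple intersections are arcs or circles that we try to remove. This is where smoothness is used: transversality and a Morse function on $B^4$ restricted to the discs.

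\textbf{Step 3: a common metabolizer.} Let $K=\ker\big(H_1(F)\to H_1(M)\big)$, where $M=\bigcup M_i$. For $x,y\in K$, push $x$ off in the $\varepsilon$ directions. The classes bound 2-chains in $M$ and its $\varepsilon$-pushoff, and these 2-chains are disjoint. Hence $\alpha^\varepsilon(x,y)=\lk(x^\varepsilon,y)=0$ simultaneously for all $\varepsilon$.

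We will show that $K$ is a direct summand of half rank. Half rank would come from a rational "half lives, half dies" argument applied to a nondegenerate combination $\sum a_\varepsilon A^\varepsilon$ obtained via Lemma \ref{lemma:APNon0}. Rationally, $K\otimes\Q$ is Lagrangian for the Hermitian form $(1-\bar\omega)A(\omega)$ at a generic $\omega\in (S^1)^\mu$, where $\Delta\neq 0$ makes it nonsingular.

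To get an integral summand, we pass to the saturation of $K$; the vanishing of $\alpha^\varepsilon$ persists on the saturation by linearity over $\Q$. Congruence to the block form of Definition \ref{def:Metab} then follows.

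\textbf{Step 4: conclusion, and the main obstacle.} Combining Steps 1 and 3, $A_F\oplus -A_{F'}$ is generalized S-equivalent to a metabolic family. Both lie in $\mathcal{M}_\mu^\ast$, so $A_F$ and $A_{F'}$ are algebraically concordant.

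The main obstacle is Step 2 together with the half-rank claim in Step 3. The union of the $M_i$ is a singular 3-complex, and the required "half rank" uses that the complement of $M$ in $B^4$ has the right homology, i.e.\ Alexander duality for a C-complex in a 4-ball.

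To make this work, I would first try to arrange that the concordance is a sequence of births, saddles and deaths. Then $L\sqcup\overline{L'}$ is related to a split union of unknotted trivial pieces by ribbon-type band moves. Each band move changes the C-complex by a handle addition whose matrix effect can be written explicitly. The metabolizer is then exhibited directly as the span of the new handle cores, as in the classical ribbon argument, with S-equivalence moves (T2)--(T4) absorbing the clasp changes.
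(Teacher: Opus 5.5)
Your proposal has a genuine gap. Step 1 is false, and Steps 2–3 leave the key half-rank claim unproved.

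\textbf{Step 1 fails.} The link $L\sqcup\overline{L'}$ is split, so its Alexander polynomial vanishes, and so does $\Delta$ of every family of generalized Seifert matrices for it.

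You can see this directly on your C-complex. The tube joins two \emph{different} components, so unlike a (T1) handle it creates no new longitude, only its meridian $m$. Choose a $2$-sphere $S$ separating $F$ from $F'$ that meets the C-complex exactly in $m$. Every push-off of $m$ lies in $S\setminus m$ and bounds a disc there that misses the C-complex, so the row and column of $m$ vanish in every $A^\varepsilon$. The family is therefore $A_F\oplus -A_{F'}\oplus(0)$ (up to your mirror conventions). This is degenerate, with zero Alexander polynomial.

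By Lemmas~\ref{lemma:SEquivalence} and~\ref{lemma:APInvariance}, no family for $L\sqcup\overline{L'}$ lies in $\mathcal{M}_\mu^\ast$. None is generalized S-equivalent to $A_F\oplus -A_{F'}$, whose polynomial is $\pm\Delta_{A_F}\Delta_{A_{F'}}\neq 0$. So your claim that ``every family involved lies in $\mathcal{M}_\mu^\ast$'' fails at the first link.

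Passing to the band sum $J$ of Step 2 does not rescue the reduction. The bands are dictated by the concordance and may cross $F$ and $F'$. For $c$ components they also create $c-1$ further homology classes. You give no argument relating a C-complex for $J$ to $A_F\oplus -A_{F'}$ through Witt and S-equivalences. Nor do you show its Alexander polynomial is non-zero; that is a genuine computation, not a consequence of Lemma~\ref{lemma:APInvariance}.

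\textbf{Step 3 asserts rather than proves the key point.} Isotropy of $K$ for a nondegenerate form such as $H_A(\omega)$ only gives $\operatorname{rank}K\leq\frac12\operatorname{rank}H_1(F)$. The reverse inequality classically comes from Poincar\'e--Lefschetz duality for a $3$-manifold bounded by a closed surface. You have no substitute for the singular union $\bigcup M_i$, whose triple-intersection curves you also only ``try to remove''. The paper itself notes that it could not adapt exactly this argument from \cite[Chapter 8]{Lickorish}.

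\textbf{The missing idea.} Your last paragraph gestures at it but still starts from the split union. The fix is to avoid both $L\sqcup\overline{L'}$ and any half-lives-half-dies count by working with a middle level of the concordance.
\begin{enumerate}
\item Using smoothness, reorder the critical points: minima first, then saddles joining them to the cylinders, then the remaining saddles, then maxima. This produces a link $L_{1/2}$ obtained from \emph{both} $L$ and $L'$ by band-summing an unlink.
\item Cap the unlink with discs, attach the bands to $F$, and resolve each band--surface crossing (ribbon-type for equal colors, clasp-type otherwise). This gives a C-complex $F_{1/2}$ with $H_1(F_{1/2})\cong\Z^n\langle x_i\rangle\oplus\Z^n\langle y_i\rangle\oplus H_1(F)$ and $\alpha^\varepsilon(x_i,x_j)=\alpha^\varepsilon(x_i,H_1(F))=0$ for all $\varepsilon$.
\item The metabolizer of $A_{1/2}\oplus -A$ is then explicit and independent of $\varepsilon$: the $x_i$ together with the diagonal copy of $H_1(F)$.
\item The same construction from $F'$ gives $A'\sim_{\mathit{Witt}}A'_{1/2}$. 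Since both $A_{1/2}$ and $A'_{1/2}$ are families for $L_{1/2}$, you get $A_{1/2}\sim_S A'_{1/2}$.
\item A direct computation gives $\Delta_{A_{1/2}}\ddot{=}f(t)f(t^{-1})\Delta_A$ with $|f(1,\dots,1)|=1$ (Lemma~\ref{lemma:A1/2APNon0}). Hence the chain $A\sim_{\mathit{Witt}}A_{1/2}\sim_S A'_{1/2}\sim_{\mathit{Witt}}A'$ stays inside $\mathcal{M}_\mu^\ast$.
\end{enumerate}
This exploits the Witt--S--Witt chain allowed by the definition of algebraic concordance. It never needs a single metabolic family comparing $L$ with $L'$.
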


\begin{remark}
\label{remark:AlgConcGroup}
Theorem \ref{prop:AlgConcLinks} ensures the existence of a well defined map $\varphi$ from the set of $\mu$-colored links up to smooth concordance into $\mathcal{C}_\mu^{\alg}$, assigning to the concordance class of a colored link the algebraic concordance class of its family of generalized Seifert matrices. This map is \emph{additive} in the following sense. Let $L$ and $L'$ be $\mu$-colored links, and perform a connected sum $L \# L'$ along any two components of $L$ and $L'$ of the same color. Then 
$$\varphi(L\#L') = \varphi(L) \oplus \varphi(L').$$
Note that this property is valid even though the connected sum of two links is not a well defined operation, as it depends on the choice of the components.
\end{remark}

The remaining part of this section is devoted to the proof of Theorem \ref{prop:AlgConcLinks}.

\begin{proof}[Proof of Theorem \ref{prop:AlgConcLinks}.]
The proof mostly follows an argument of \cite[Section 4]{Coo82}. There are two main differences. First, we consider links with arbitrary numbers of colors and components, while Cooper restricts to two component, $2$-colored links. Second, we deal with concordance between any two links, while Cooper restricts to slice links, i.e. links concordant to the two component unlink. 

Let $L$ and $L'$ be smoothly concordant colored links, with non trivial Alexander polynomials. Note that by definition, $L$ and $L'$ have the same number of components of each color. Let $F$ and $F'$ be C-complexes for $L$ and $L'$ respectively and let $A$ and $A'$ be associated families of generalized Seifert matrices. We show that there exists a colored link $L_{1/2}$, having non trivial Alexander polynomial, with the same number of components of each color as $L$ and $L'$, and with C-complexes $F_{1/2}$ and $F_{1/2}'$ such that the associated families of generalized Seifert matrices $A_{1/2}$ and $A'_{1/2}$ satisfy $A \sim_\textit{Witt} A_{1/2}$ and $A'\sim_\textit{Witt}A'_{1/2}$. The fact that $L$ and $L'$ are algebraically concordant then follows as 
\begin{equation}
\label{eq:AlgConc}
A \sim_\textit{Witt} A_{1/2} \sim_{S} A'_{1/2} \sim_\textit{Witt} A'.
\end{equation}

The link $L_{1/2}$ is obtained from $L$ and $L'$ by \emph{band summing an unlink}, a notion that we recall below. Note that the existence of such a link for concordant links is well known, see for instance \cite[Lemma 1]{NakagawaAP}. We chose to recall a proof here in order to deal with colors, and for the sake of completeness.

To build the link $L_{1/2}$, we use Morse theory. As the concordance is smooth, it can be arranged in the following way : starting from $L$, the first critical points are the minima, then come all saddles connecting these minima to the cylinders of the concordance, and after that all remaining saddles. The maxima come last. This is a consequence of the fact that a smooth manifold can be decomposed into handles attached in increasing order, see for instance \cite[Proposition 4.2.7]{GompfStipsicz}.

It follows that there exists a colored link $L_{1/2}$, obtained from both $L$ and $L'$ by band summing an unlink. More precisely, $L_{1/2}$ is obtained from $L$ as follows. Start with $L$, and take the disjoint union with an unlink $U$, unlinked from $L$ (this corresponds to minima). Choose arcs running from the components of $L$ to $U$, disjoint from each other and with interiors disjoint from $L$ and $U$. Require that each component $U$ is the endpoint of exactly one arc. Then perform band sums along these arcs, that is remove a neighborhood of each arc in $L$ and $U$, and glue two parallel copies of the arc (these correspond to saddles). The result is a link with the same number of components as $L$, and we denote it by $L_{1/2}$. It is colored as follows : each component of $U$ inherits the color of the component of $L$ with which it is band summed. Note that this preserves the number of components in each color, as no new component is added, no component is deleted, and no two components are merged. This process is illustrated in Figure \ref{fig:BandSumUnlink}.

Turning the concordance upside down, and starting from $L'$ shows that $L_{1/2}$ is also obtained from $L'$ by band summing an unlink. Observe that the colorings of $L_{1/2}$ obtained from $L$ and $L'$ are the same. Indeed, $L_{1/2}$ is obtained as a slice of the concordance, and its coloring correspond to the colors of the cylinders. These preserve the colors of $L$ and $L'$ by definition.

\begin{figure}[h]
\centering
\begin{overpic}[width=14cm]{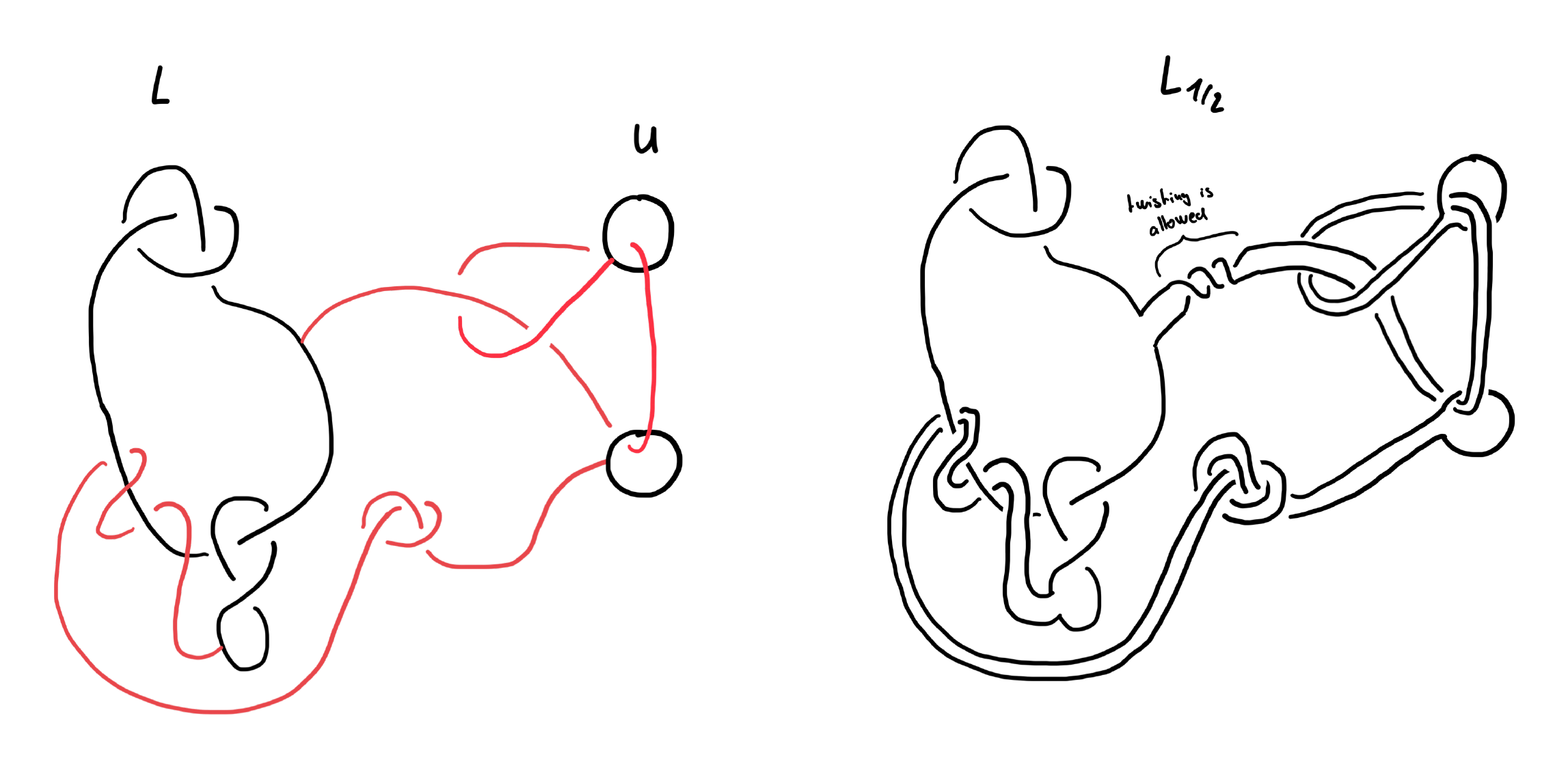}\end{overpic}
\caption{On the left, a link $L$ with a two components unlink, in black. In red, the arc along which the band sum is performed. On the right, the result after band sum.}
\label{fig:BandSumUnlink}
\end{figure}

A C-complex $F_{1/2}$ for $L_{1/2}$ is obtained from $F$ as follows. First, fill the components of $U$ with discs, and connect these to $F$ using the bands of the band sum. The result is a priori not a C-complex, as the bands might intersect $F$ and the discs. Observe that each intersection consists of a band intersecting a surface, and can be resolved as indicated in Figure \ref{fig:ResolvingBandIntersections}, depending on whether the intersection happens between surfaces of the same or different colors. After resolving all intersections, we end up with a C-complex $F_{1/2}$ for $L_{1/2}$.

\begin{figure}[h]
\centering
\begin{overpic}[width=14cm]{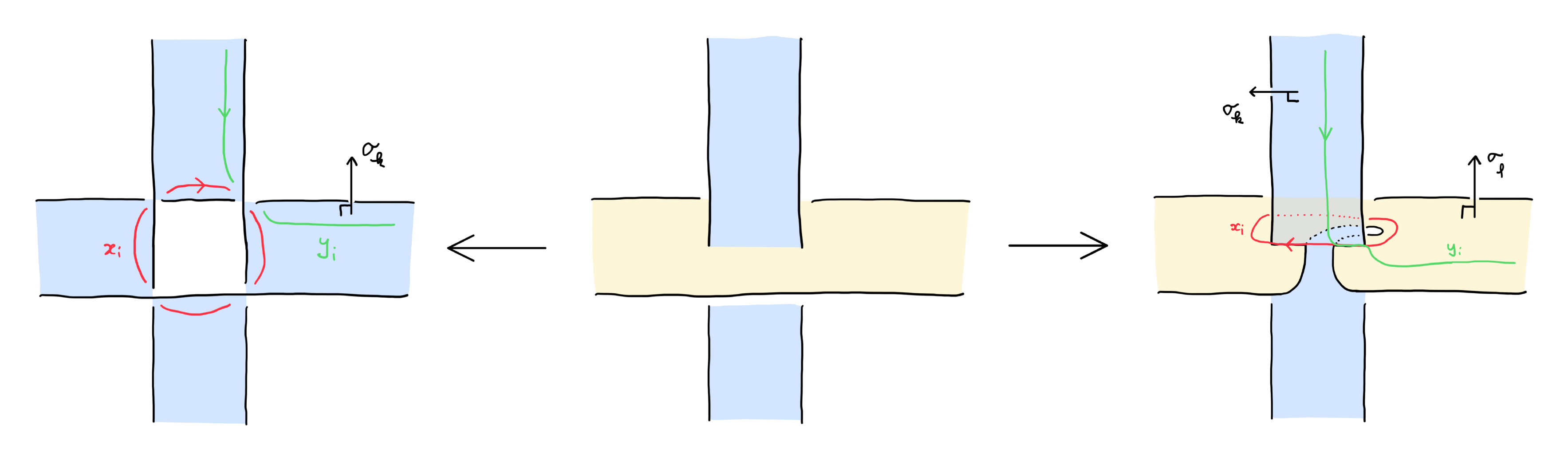}\end{overpic}
\caption{In the middle, a band, in blue, intersecting a surface, in yellow. Left, how the intersection is resolved if both the band and the surface are of the same color. Right, how the intersection is resolved if the band and the surface have different colors.}
\label{fig:ResolvingBandIntersections}
\end{figure}

As the C-complex $F$ is connected, there is a canonical isomorphism 
$$H_1(F_{1/2}) \cong \left(\bigoplus_{i=1}^n \Z[x_i]\right) \oplus \left(\bigoplus_{i=1}^n \Z[y_i]\right) \oplus H_1(F),$$
with $x_i$ and $y_i$ the curves indicated in Figure \ref{fig:ResolvingBandIntersections}, and $n$ the number of resolved intersections. Clearly, for all $i,j$, for all sign $\varepsilon$ and for all $y \in H_1(F)$ we have
$$\alpha^\varepsilon(x_i, x_j) = \alpha^\varepsilon(x_i, y) = 0.$$
Therefore, in a basis corresponding to the above decomposition, the generalized Seifert form $\alpha^\varepsilon$ is represented by a matrix of the shape
$$A_{1/2}^\varepsilon:=\begin{pmatrix}
0&C^\varepsilon & 0\\ D^\varepsilon &\ast&\ast \\0&\ast& A^\varepsilon
\end{pmatrix},$$
for some $n\times n$ matrices $C^\varepsilon$ and $D^\varepsilon$ satisfying $D^\varepsilon = (C^{-\varepsilon})^{\T}$.
Taking the direct sum with $-A^\varepsilon$, there are congruences
$$A_{1/2}^\varepsilon \oplus -A^\varepsilon = \begin{pmatrix}
0&C^\varepsilon & 0&0\\ D^\varepsilon&\ast&\ast&0 \\0&\ast& A^\varepsilon&0\\0&0&0&-A^\varepsilon
\end{pmatrix} \sim
\begin{pmatrix}0&C^\varepsilon & 0&0\\ D^\varepsilon&\ast&\ast&0 \\0&\ast& 0&-A^\varepsilon\\0&0&-A^\varepsilon&-A^\varepsilon\end{pmatrix} 
\sim \begin{pmatrix}0&0&C^\varepsilon&0\\ 0&0&\ast&-A^\varepsilon\\D^\varepsilon&\ast&\ast&0\\0&-A^\varepsilon&0&-A^\varepsilon\end{pmatrix},$$
realized by summing or permuting rows and columns. These congruence do not depend on $\varepsilon$, and the result is a metabolic matrix. Therefore, the family of generalized Seifert matrices $A_{1/2} \oplus -A$ is metabolic, and the families $A_{1/2}$ and $A$ are Witt equivalent.

We delay the proof that the family $A_{1/2}$ has non zero Alexander polynomial to Lemma \ref{lemma:A1/2APNon0} below.

Similarly, starting with the C-complex $F'$ for $L'$, we build a C-complex $F'_{1/2}$ for $L_{1/2}$ with a family of generalized Seifert matrices $A_{1/2}'$ Witt equivalent to $A'$. 

As $A_{1/2}$ and $A_{1/2}'$ are generalized Seifert matrices coming from C-complexes for the same colored link, they are S-equivalent. This establishes the equivalences in (\ref{eq:AlgConc}), concluding the proof of the theorem.
\end{proof}

\begin{lemma}
\label{lemma:A1/2APNon0}
The family $A_{1/2}$ satisfies $\Delta_{A_{1/2}}(t) \,\ddot=\, f(t)f(t^{-1})\Delta_A(t)$ for some Laurent polynomial $f \in \Lambda$ with $\vert f(1, \ldots, 1) \vert = 1$. In particular, the family $A_{1/2}$ has non trivial Alexander polynomial.
\end{lemma}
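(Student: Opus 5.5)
We plan to compute $\Delta_{A_{1/2}}(t)$ directly from the block shape of $A_{1/2}^\varepsilon$ obtained in the proof of Theorem \ref{prop:AlgConcLinks}. Since $A_{1/2}(t) = \sum_\varepsilon \varepsilon_1\cdots\varepsilon_\mu t_1^{\frac{1-\varepsilon_1}{2}}\cdots t_\mu^{\frac{1-\varepsilon_\mu}{2}} A_{1/2}^\varepsilon$ is a linear combination of the $A_{1/2}^\varepsilon$ with coefficients independent of the block decomposition, it inherits the same shape:
$$A_{1/2}(t) = \begin{pmatrix} 0 & C(t) & 0\\ D(t) & \ast & \ast \\ 0 & \ast & A(t)\end{pmatrix},$$
where $C(t) = \sum_\varepsilon \varepsilon_1\cdots\varepsilon_\mu t^{\frac{1-\varepsilon}{2}} C^\varepsilon$ and similarly for $D(t)$. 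Expanding the determinant along the first block of rows and the first block of columns (both are zero except in the $C$, respectively $D$, block), we obtain
$$\Delta_{A_{1/2}}(t) = \pm \det C(t)\,\det D(t)\,\det A(t) = \pm \det C(t)\det D(t)\,\Delta_A(t).$$
More precisely, we permute the block rows and columns to reach an upper block-triangular form with diagonal blocks $C(t)$, $D(t)$ and $A(t)$.

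Next we relate $D(t)$ to $C(t)$. Using $D^\varepsilon = (C^{-\varepsilon})^{\T}$ and reindexing $\varepsilon \mapsto -\varepsilon$, we get
$$D(t) = \sum_\varepsilon \prod_i(-\varepsilon_i)\, t^{\frac{1+\varepsilon}{2}} (C^\varepsilon)^{\T} = (-1)^\mu t_1\cdots t_\mu\, C(t^{-1})^{\T},$$
since $t_i^{\frac{1+\varepsilon_i}{2}} = t_i\cdot t_i^{-\frac{1-\varepsilon_i}{2}}$. Hence $\det D(t) \,\dot=\, \det C(t^{-1})$, and setting $f(t) := \det C(t)$ gives $\Delta_{A_{1/2}} \,\dot=\, f(t)f(t^{-1})\Delta_A(t)$, a fortiori up to units of $\Lambda_S$.

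The main obstacle is showing $|f(1,\ldots,1)| = 1$, which requires identifying $C(1,\ldots,1)$ geometrically. At $t = (1,\ldots,1)$, the matrix $C(1) = \sum_\varepsilon \varepsilon_1\cdots\varepsilon_\mu C^\varepsilon$ computes, for each pair $(x_i,y_j)$, an alternating sum of linking numbers of the pushoffs. This alternating sum should reduce to an algebraic intersection count between $x_i$ and $y_j$ on $F_{1/2}$, up to sign. We would check on Figure \ref{fig:ResolvingBandIntersections}, in both the same-color and different-color cases, that $x_i$ meets $y_i$ once and $x_i$ misses $y_j$ for $i\neq j$, possibly after reordering or upper-triangularizing the $y_j$. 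Then $C(1)$ is triangular with diagonal entries $\pm 1$, so $\det C(1) = \pm1$.

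If the local picture does not directly give this, we would fall back on a local computation. Only the local model near each resolved intersection contributes to the entries $\alpha^\varepsilon(x_i,y_j)$ in the alternating sum, so these entries can be evaluated separately in the two cases of the figure. Finally, since $f(1)\neq 0$ forces $f\neq 0$, and $\Delta_A\neq 0$ by hypothesis, $\Delta_{A_{1/2}}$ is nonzero because $\Lambda$ is a domain.
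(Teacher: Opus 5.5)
You reduce correctly to $\det C(t)\det C(t^{-1})\Delta_A(t)$, using the block determinant and the identity $D(t)=(-1)^\mu t_1\cdots t_\mu\, C(t^{-1})^{\T}$, and this matches the paper. The gap is in the normalization.

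With $f:=\det C(t)$, the claim $|f(1,\ldots,1)|=1$ is false as soon as $\mu\geq 2$. The matrix $C(1,\ldots,1)=\sum_\varepsilon\varepsilon_1\cdots\varepsilon_\mu C^\varepsilon$ is a $\mu$-fold alternating difference. It kills every entry $C^\varepsilon_{ij}$ that does not depend on all $\mu$ coordinates of $\varepsilon$. Only for $\mu=1$ does it reduce to an intersection number on the surface.

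Here the entries of $C^\varepsilon$ are either independent of $\varepsilon$, or $\pm\delta(\varepsilon_k,\sigma_k)$, or $\delta(\varepsilon_k,\sigma_k)\delta(\varepsilon_l,\sigma_l)$. The corresponding entries of $C(t)$ are therefore $c\prod_i(1-t_i)$, $\pm\sigma_k t_k^{\frac{1-\sigma_k}{2}}\prod_{i\neq k}(1-t_i)$, or $\sigma_k\sigma_l t_k^{\frac{1-\sigma_k}{2}}t_l^{\frac{1-\sigma_l}{2}}\prod_{i\neq k,l}(1-t_i)$. For $\mu\geq 3$ all of these vanish at $(1,\ldots,1)$. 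For $\mu=2$, every row coming from a same-color intersection vanishes there. For example, with $\mu=2$ and a single same-color intersection, $C(t)=\sigma_1t_1^{\frac{1-\sigma_1}{2}}(1-t_2)$ and $\det C(1,1)=0$. No reordering of the $y_j$ and no local linking computation can fix this. For the same reason, your argument does not show $\Delta_{A_{1/2}}\neq 0$ when $\mu\geq 2$.

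The missing idea is to use the fact that the statement only holds up to units of $\Lambda_S$, which you noted but did not exploit. From row $s$ of $C(t)$, factor out $\prod_{i\neq k_s}(1-t_i)$ for a same-color intersection, or $\prod_{i\neq k_s,l_s}(1-t_i)$ for different colors. Then set $f:=\det C(t)_{\mathrm{red}}$. The discarded factors, and their images $(1-t_i^{-1})$ under $t\mapsto t^{-1}$, are units of $\Lambda_S$.

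For $C(1,\ldots,1)_{\mathrm{red}}$ to be diagonal with entries $\pm1$, you need control of the off-diagonal entries, not just the diagonal ones. In a same-color row they must be independent of $\varepsilon$. In a different-color row they must carry only the single factor $\delta(\varepsilon_{k_s},\sigma_{k_s})$, with the same index $k_s$ as the diagonal entry. Then after division each off-diagonal entry keeps a factor $(1-t_{k_s})$ or $(1-t_{l_s})$ respectively, and so vanishes at $(1,\ldots,1)$. The paper gets this structure by arranging that each $y_j$ runs along its band and crosses the resolved intersection of $x_i$, for $i\neq j$, straight across rather than at an angle. As written, your argument is complete only for $\mu=1$.
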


\begin{proof}
By definition, we have 
$$A_{1/2}(t) = \begin{pmatrix} 0 & C(t) & 0 & \\ D(t) & \ast & \ast \\ 0 & \ast & A(t)\end{pmatrix}.$$
Therefore, 
$$\Delta_{A_{1/2}}(t) = \det(A_{1/2}(t)) = \det(C(t)) \det(D(t)) \Delta_{A}(t).$$
As the family of matrices $D$ satisfy $D^\varepsilon = (C^{-\varepsilon})^{\T}$, we have $\det(D(t)) \, \dot =\, \det(C(t^{-1}))$. Thus, the previous displayed equality reads
$$\Delta_{A_{1/2}}(t) \,\dot=\,\det(C(t)) \det(C(t^{-1})) \Delta_A(t).$$
Therefore, to complete the proof, it remains to show that $\det(C(t))$ is a polynomial of the form 
\begin{equation}
\label{eq:AF1}
\det(C(t)) \,\dot=\, f(t) \cdot \prod(t_i-1)^{n_i}, \text{ with } \vert f(1, \ldots, 1)\vert = 1.
\end{equation}

The remainder of the proof is devoted to establishing $(\ref{eq:AF1})$. This goes through an explicit description of the matrices $C^\varepsilon$, that we now begin.

By definition, $C^\varepsilon_{ij} = \lk(x_i, y_j^\varepsilon)$ where $x_i$ is a cycle coming from a resolution of a band intersection, and for which we do have a complete picture, and $y_j$ is a cycle, also coming from a band intersection, but for which we do not have a complete picture. These are represented in Figure \ref{fig:ResolvingBandIntersections}. 

For $i = j$, a direct computation shows that if the $i^{\text{th}}$ band intersection is resolved as in the left of Figure \ref{fig:ResolvingBandIntersections}, we have 
$$\lk(x_i^\varepsilon, y_i) = \delta(\varepsilon_k, \sigma_k)$$
with $\sigma_k \in \{\pm\}$ the sign indicated in Figure \ref{fig:ResolvingBandIntersections} and $\delta$ the Kronecker delta. Similarly, if the band intersection is resolved as in the right of Figure \ref{fig:ResolvingBandIntersections}, for all $\varepsilon$, we have
$$\lk(x_i^\varepsilon,y_i) = \delta(\varepsilon_k, \sigma_k)\delta(\varepsilon_l, \sigma_l),$$
with $\sigma_k$, $\sigma_l \in \{\pm\}$ the signs indicated in Figure \ref{fig:ResolvingBandIntersections}. 

It remains to compute $\lk(x_i, y_j^\varepsilon)$ for $i \neq j$. The key point is the following : we can always assume that the cycle $y_j$ crosses the resolved band intersection corresponding to $x_i$ either vertically, or horizontally, but never with an \emph{angle}, as in Figure \ref{fig:ResolvingBandIntersections} which corresponds to the case $i=j$. Indeed, we can always assume that $y_j$ follows a band, and goes out of the band only at the resolved intersection corresponding to $x_j$. Therefore, we distinguish two cases, depending on the resolution of the band intersection. If it is resolved as in the left of Figure \ref{fig:ResolvingBandIntersections}, we can assume that the cycles $x_i$ and $y_j$ are disjoint and thus
$$\lk(x_i^\varepsilon, y_j) = \lk(x_i, y_j)$$
is independent of $\varepsilon$. On the other hand, if the intersection is resolved as in the right of Figure \ref{fig:ResolvingBandIntersections}, either $y_j$ intersects $x_i$ geometrically once, yielding $\lk(x_i, y_j^\varepsilon) = \pm\delta(\varepsilon_k, \sigma_k)$, or $y_j$ is disjoint from $x_i$ and in this case $\lk(x_i^\varepsilon, y_j) = 0$. Therefore, using the notation $\langle x_i, y_j\rangle$ for the algebraic intersection number of $x_i$ and $y_j$, we have
$$\lk(x_i^\varepsilon, y_j) = \pm \langle x_i, y_j\rangle \delta(\varepsilon_k, \sigma_k).$$

Without lost of generality, we can assume that the band intersections corresponding to $x_1, \ldots, x_{m-1}$ are between surfaces of the same color, while those corresponding to $x_{m}, \ldots, x_n$ are between surfaces of different colors. Using the explicit computations above, the matrix $C^\varepsilon$ is of the form
$$\footnotesize
\begin{pmatrix} \delta(\varepsilon_{k_1}, \sigma_{k_1}) &\cdots & \lk(x_1, y_{m-1}) & \lk(x_1, y_{m}) & \cdots & \lk(x_1, y_n) \\
\vdots &\ddots & \vdots & \vdots && \vdots \\
\lk(x_{m-1}, y_1) & \cdots & \delta(\varepsilon_{k_{m-1}}, \sigma_{k_{m-1}}) & \lk(x_{m-1}, y_{m}) & \cdots & \lk(x_{m-1}, y_n)\\
\pm \langle x_{m}, y_1\rangle\delta(\varepsilon_{k_{m}}, \sigma_{k_{m}}) & \cdots & \pm \langle x_{m}, y_{m-1}\rangle \delta(\varepsilon_{k_{m}}, \sigma_{k_{m}}) & \delta(\varepsilon_{k_{m}}, \sigma_{k_{m}}) \delta(\varepsilon_{l_{m}}, \sigma_{l_{m}}) & \cdots & \pm \langle x_{m}, y_n\rangle\delta(\varepsilon_{k_{m}}, \sigma_{k_{m}})\\
\vdots & & \vdots & \vdots & \ddots & \vdots\\
\pm \langle x_{n}, y_1 \rangle \delta (\varepsilon_{k_{n}}, \sigma_{k_{n}}) & \cdots & \pm \langle x_{n}, y_{m-1}\rangle \delta(\varepsilon_{k_{n}}, \sigma_{k_{n}}) & \pm \langle x_{n}, y_{m}\rangle\delta(\varepsilon_{k_{n}}, \sigma_{k_{n}})  & \cdots & \delta(\varepsilon_{k_{n}}, \sigma_{k_{n}}) \delta(\varepsilon_{l_{n}}, \sigma_{l_{n}})
\end{pmatrix}$$
Combining this equality together with the definition of $C(t)$, we obtain
$$C(t) = \begin{pmatrix} \sigma_{k_1} t_{k_1}^\frac{1-\sigma_{k_1}}{2} \prod_{i\neq k_1} (1-t_i)  & \ldots & \lk(x_1, y_n) \prod_i (1-t_i)\\
\vdots & \ddots & \vdots \\
\pm x_n\cdot y_1 \sigma_{k_n} t_{k_n}^{\frac{1-\sigma_{k_n}}{2}}\prod_{i\neq k_n} (1-t_i) & \cdots &  \sigma_{k_n}\sigma_{l_n}t_{k_n}^{\frac{1-\sigma_{k_n}}{2}} t_{l_n}^{\frac{1-\sigma_{l_n}}{2}} \prod_{i \neq k_n, l_n} (1-t_i) \end{pmatrix}.$$
Roughly speaking, if the coefficient of $C^\varepsilon_{i,j}$ is independent of $\varepsilon$, then $C(t)_{i,j} = C^\varepsilon_{i,j} \prod_{i}(1-t_i)$. On the other hand, if $C^\varepsilon_{i,j}$ has a factor $\delta(\varepsilon_{k}, \sigma_k)$, the term $(1-t_k)$ is removed from the product, and a coefficient $\sigma_k t_k^{(1-\sigma_k)/2}$ appears. This is an immediate computation from the definition of $C(t)$.

Observe that for $s \in \{1, \ldots, m-1\}$, the row $s$ is divisible by $\prod_{i\neq k_{s}}(1-t_i)$, and for $s \in \{m, \ldots, n\}$, the row $s$ is divisible by $\prod_{i \neq k_s, l_s}(1-t_i)$. 
It follows that the determinant of $C(t)$ is equal, up to multiplications by factors $(1-t_i)$, to the determinant of the matrix
$$C(t)_{\text{red}} := \begin{pmatrix} \sigma_{k_1} t_{k_1}^\frac{1-\sigma_{k_1}}{2}  & \ldots & \lk(x_1, y_n) (1-t_{k_1})\\ 
 \vdots & \ddots & \vdots \\
\pm x_n\cdot y_1 \sigma_{k_n} t_{k_n}^{\frac{1-\sigma_{k_n}}{2}} (1-t_{l_n}) & \cdots &  \sigma_{k_n}\sigma_{l_n}t_{k_n}^{\frac{1-\sigma_{k_n}}{2}} t_{l_n}^{\frac{1-\sigma_{l_n}}{2}} \end{pmatrix}.$$
We set $f(t):= \det(C(t)_{\text{red}})$ and note that, as $C(1, \ldots, 1)_{\text{red}}$ is a diagonal matrix with only $\pm1$ on the diagonal, we have $\vert f(1, \ldots, 1) \vert =~1$. Hence the equality 
$$\det(C(t)) \,\dot=\, \det(C(t)_{\text{red}}) \cdot \prod_i (t_i-1)^{n_i} = f(t) \cdot \prod_i (t_i-1)^{n_i}$$
establishes $(\ref{eq:AF1})$ and completes the proof.
\end{proof}

\section{Algebraic concordance invariants of links}
\label{sec:AlgConcInvariants}
The aim of this section is twofold. First, in Sections \ref{sub:Signature} through \ref{sub:Blanchfield}, we review the following known invariants of colored links : the signature, the Alexander polynomial and the Blanchfield pairing. We show that these invariants naturally extend to families of matrices in $\mathcal{M}_\mu^\ast$, and use them to define algebraic concordance invariants. In Section \ref{sub:Structure}, we show that these invariants can be combined to unveil part of the structure of $\mathcal{C}_\mu^\alg$, proving Theorem \ref{prop:StructureAlgConcGroup}. Finally, Section \ref{sub:WittFinite} briefly discusses the potential existence of a $\Z_4^\infty$ summand of $\mathcal{C}_\mu^\alg$.

\subsection{Signatures}
\label{sub:Signature} In this section, after briefly recalling the Levine-Tristram signature of knots and its extension to colored links, we define the signature of families of matrices in $\mathcal{M}_\mu^\ast$ and show that it is an algebraic concordance invariant. We then apply this result to links, proving Corollary~\ref{cor:Signature}.
\medskip

The Levine-Tristram signature of a knot is a well known algebraic concordance invariant, defined as follows. For a knot $K$, with a pair of Seifert matrices $\{A, A^{\T}\}$ and a complex number $\omega \in S^1\setminus \{1\}$, the signature $\sigma_K(\omega)$ of $K$ at $\omega$ is the signature of the hermitian matrix 
$$H(\omega) := (1-\omega)A + (1-\overline{\omega})A^{\T}.$$
A direct computation shows that the signature is invariant under classical S-equivalence, and therefore is a well defined invariant of $K$. The fact that it is an algebraic concordance invariant (for $\omega$ not a root of the Alexander polynomials) is immediate from the purely algebraic fact that the signature of a non degenerate hermitian matrix is unchanged by Witt equivalence.

The signature of a colored link was defined by Cimasoni and Florens in \cite{C-F08}, using C-complexes and generalized Seifert matrices. Their techniques extended previous work of Cooper, which considered the case of two component, $2$-colored links, see \cite{Coo82}. Similarly as in the classical case of knots, Cimasoni and Florens proved that this signature does not depend on the choice of generalized Seifert matrices by showing that it is invariant under generalized S-equivalence, (see \cite{DMO21} for a completed proof). In their original paper, they also showed that, when restricted to suitable complex numbers, the signature is a topological concordance invariant of colored links. Their proof is based on the consideration of four dimensional intersection forms. This result was later strengthened in \cite[Corollary 1.3]{CNT}, also by using four dimensional techniques.

Our goal is now to apply the strategy from the knot case to the signature defined by Cimasoni and Florens. More precisely, we observe that the definition of Cimasoni and Florens makes sense for any family of matrices in $\mathcal{M}_\mu^\ast$, and that, by their computations, it is invariant by generalized S-equivalence. We then use the algebraic fact that that the signature of a non degenerate hermitian matrix is invariant under Witt equivalence to show that the signature is an algebraic concordance invariant, except at zeros of the Alexander polynomial. In particular, we recover a variation of \cite[Theorem 7.1]{C-F08} and \cite[Corollary 1.3]{CFT18} as Corollary \ref{cor:Signature}, in the smooth category.
\medskip

We start with a definition, inspired by \cite{C-F08}. Recall from Definition \ref{def:MAP} the notation
$$A(t_1, \ldots, t_\mu) := \sum_{\varepsilon \in \{\pm\}^\mu}\varepsilon_1 \ldots \varepsilon_\mu t_1^{\frac{1-\varepsilon_1}{2}} \ldots t_\mu^{\frac{1-\varepsilon_\mu}{2}}A^\varepsilon,$$
for any family of matrices $A \in \mathcal{M}_\mu$.

\begin{definition}
For a family of matrices $A$ in $\mathcal{M}_\mu$ and a vector of complex numbers $\omega := (\omega_1, \ldots, \omega_\mu) \in (S^1\setminus\{1\})^\mu$, the signature $\sigma_A(\omega)$ of $A$ at $\omega$ is the signature of the hermitian matrix
$$H_A(\omega) :=\left(\prod_{i=1}^\mu(1-\overline{\omega_i})\right) A(\omega_1, \ldots, \omega_\mu).$$
\end{definition}

\begin{remark}
\label{rk:Signature}
\begin{enumerate}[(i)]
\item In the case where $A$ is a family of generalized Seifert matrices for a colored link $L$, the signature $\sigma_A(\omega)$ corresponds to the signature $\sigma_L(\omega)$ of $L$. Indeed, this is the definition of \cite[Section 2.2]{C-F08}.
\item By definition, the signature is additive, i.e. it satisfies $\sigma_{A\oplus B}(\omega) = \sigma_A(\omega) \oplus \sigma_B(\omega)$ for all families of matrices $A$ and $B$ in $\mathcal{M}_\mu$ and all $\omega$ in $(S^1\setminus 1)^\mu$. 
\end{enumerate}
\end{remark}

We now show that the signature is an algebraic concordance invariant, away from the zeros of the Alexander polynomial.

\begin{proposition}
\label{prop:SignatureAlgConcInvariant}
Suppose that $A$ and $B$ are algebraically concordant families of matrices in $\mathcal{M}_\mu^\ast$. Then for any $\omega \in (S^1\setminus \{1\})^\mu$ such that $\Delta_A(\omega)$ and $\Delta_B(\omega)$ are non zero, we have $\sigma_A(\omega) = \sigma_B(\omega)$.
\end{proposition}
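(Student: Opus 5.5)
The plan is to reduce the statement to two facts, each valid away from the zeros of the relevant Alexander polynomials, and then to bridge the intermediate families of a concordance chain by a genericity and local constancy argument.

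\emph{Step 1 (non-singularity).} For any family $A\in\mathcal{M}_\mu$, expanding the definitions gives
$$\det H_A(\omega)=\Big(\prod_{i=1}^\mu(1-\overline{\omega_i})\Big)^{\operatorname{size}(A)}\Delta_A(\omega).$$
Since $\omega_i\neq 1$, the hermitian matrix $H_A(\omega)$ is non-singular exactly when $\Delta_A(\omega)\neq 0$. It follows that $\omega\mapsto\sigma_A(\omega)$ is locally constant on the open set $\{\Delta_A\neq 0\}\subset(S^1\setminus\{1\})^\mu$. Indeed, $H_A(\omega)$ depends continuously on $\omega$, and eigenvalues cannot cross $0$ while the determinant stays non-zero.

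\emph{Step 2 (single equivalences).} Let $A,A'\in\mathcal{M}_\mu^\ast$ and let $\omega$ be a point with $\Delta_A(\omega)\Delta_{A'}(\omega)\neq0$.
\begin{itemize}
\item If $A$ and $A'$ are generalized S-equivalent, then $\sigma_A(\omega)=\sigma_{A'}(\omega)$. This is the computation of Cimasoni--Florens (completed in \cite{DMO21}). It applies verbatim to abstract families, because each move (T1)--(T4) and each unimodular congruence is purely algebraic. Unimodular congruence is immediate: if $CA^\varepsilon C^{\T}=B^\varepsilon$ for all $\varepsilon$ with the same $C$, then $H_B(\omega)=CH_A(\omega)C^{\T}$, and $C$ is real.
\item If $A$ and $A'$ are Witt equivalent, then $H_{A\oplus -A'}(\omega)=H_A(\omega)\oplus -H_{A'}(\omega)$. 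Since $C$ does not depend on $\varepsilon$, the congruence making the family $A\oplus -A'$ metabolic carries $H_{A\oplus-A'}(\omega)$ to a matrix with a half-size zero block in the top-left corner. A non-singular hermitian matrix of size $2n$ with an $n$-dimensional isotropic subspace has $n_+\le n$ and $n_-\le n$, and $n_++n_-=2n$, so its signature is $0$. Additivity of the signature (Remark~\ref{rk:Signature}) then gives $\sigma_A(\omega)=\sigma_{A'}(\omega)$.
\end{itemize}

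\emph{Step 3 (chaining).} Take a chain $A=A_0,\dots,A_n=B$ in $\mathcal{M}_\mu^\ast$ realizing the algebraic concordance, and fix $\omega$ with $\Delta_A(\omega)\Delta_B(\omega)\ne0$. The obstacle is that an intermediate $\Delta_{A_i}$ may vanish at $\omega$, so Step 2 cannot be applied at $\omega$ itself. To get around this, I use Step 1 to choose a neighbourhood $U$ of $\omega$ on which $\sigma_A$ and $\sigma_B$ are constant. The product $P:=\prod_i\Delta_{A_i}$ is a non-zero Laurent polynomial. A real-analytic function on the torus that vanishes on a non-empty open set vanishes identically, and $P$ does not, so $P$ cannot vanish on all of $U$. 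Pick $\omega'\in U$ with $P(\omega')\neq0$. Applying Step 2 along the chain at $\omega'$ gives
$$\sigma_A(\omega)=\sigma_A(\omega')=\sigma_{A_1}(\omega')=\cdots=\sigma_B(\omega')=\sigma_B(\omega).$$

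The main point needing care is the S-equivalence invariance in Step 2. One must check that the Cimasoni--Florens argument only uses the algebraic form of the moves, and in particular that the moves produce a non-singular hyperbolic-type block. This holds at least where both determinants are non-zero, which is all that is needed here.
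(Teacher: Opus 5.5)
Your proof is correct. Steps 1 and 2 use the same two ingredients as the paper's proof. The first is invariance of $\sigma$ under generalized S-equivalence, quoted from \cite{C-F08} and \cite{DMO21}. The second is that a non-singular hermitian matrix with a half-size isotropic subspace has signature zero; the paper just asserts that Witt equivalent non-degenerate hermitian matrices have equal signatures, while you prove it via $n_\pm\le n$.

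The real difference is your Step 3. The paper reduces to a single S-equivalence or a single Witt equivalence between two families whose Alexander polynomials are both non-zero at $\omega$, and stops there. But the hypothesis only controls the endpoints $A$ and $B$ of the chain, and an intermediate family can be degenerate at $\omega$. For instance, $A$ is Witt equivalent to $A\oplus M$ for any metabolic $M\in\mathcal{M}_\mu^\ast$, and $\Delta_M$ may vanish at $\omega$. Then the Witt step from $A$ to $A\oplus M$ cannot be evaluated at $\omega$: a singular matrix with a half-size zero block, such as $\bsm 0&0\\0&1\esm$, can have non-zero signature. Your argument closes this. You move to a nearby $\omega'$ that avoids the zeros of $\prod_i\Delta_{A_i}$, and use local constancy of $\sigma_A$ and $\sigma_B$ near $\omega$ to transport the result back. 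This makes the chaining rigorous, so your version is the more complete of the two.

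Your closing caveat is not actually needed. The Cimasoni--Florens invariance holds at every $\omega\in(S^1\setminus\{1\})^\mu$. For example, for (T1) the new off-diagonal entry of $H$ is $(1-\overline{\omega_i}^{\sigma_i})\prod_{j\neq i}\vert 1-\omega_j\vert^2\neq 0$, and similarly for (T2). So only the Witt steps require the determinant condition, which your choice of $\omega'$ guarantees.
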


\begin{proof}
Invariance by generalized S-equivalence of the signature was already established in \cite[Theorem 2.1]{C-F08} for the first three transformations, and by \cite[Theorem 3.2]{DMO21} for the fourth transformation. Note that these two results are stated for families of matrices coming from colored links, but the proofs deal with generalized S-equivalence, and thus hold for any family of matrices in $\mathcal{M}_\mu^\ast$.

Thus, to prove the proposition, it only remains to see that if families $A$ and $B$ are Witt equivalent, with $\Delta_A(\omega)$ and $\Delta_B(\omega)$ both non zero, then the signatures $\sigma_A(\omega)$ and $\sigma_B(\omega)$ coincide. 
Note that in this case, the equality $\Delta_A(\omega) \;\ddot=\; \det(H_A(\omega))$ (immediate from the definition) implies that $H_A(\omega)$ is a non degenerate matrix. Similarly, $H_B(\omega)$ is a non degenerate matrix. Therefore, $H_A(\omega)$ and $H_B(\omega)$ are Witt equivalent, non degenerate, hermitian matrices and thus have the same signature.
\end{proof}

Now that it has been established that the signature is an algebraic concordance invariant, the invariance of the signature for links is a direct consequence of Theorem \ref{prop:AlgConcLinks}.

\begin{proof}[Proof of Corollary \ref{cor:Signature}]
Suppose that $L$ and $L'$ are smoothly concordant colored links. First, note that if $\Delta_L$ is trivial (or equivalently if $\Delta_{L'}$ is trivial), then the statement is trivially satisfied. 

It remains to deal with the case where both Alexander polynomials are non trivial. By Theorem \ref{prop:AlgConcLinks}, the links $L$ and $L'$ are algebraically concordant. By definition, this means that any of their families of generalized Seifert matrices $A$ and $A'$ are algebraically concordant. Thus, by Proposition \ref{prop:SignatureAlgConcInvariant}, the signatures $\sigma_A(\omega)$ and $\sigma_{A'}(\omega)$ coincide provided $\omega$ is not a root of $\Delta_A(t)$ or $\Delta_{A'}(t)$. As all the coordinates of $\omega$ are different from one, this is equivalent to the fact that $\omega$ is not a root of the Alexander polynomials of $L$ and $L'$. As the signatures $\sigma_A(\omega)$ and $\sigma_{A'}(\omega)$ are precisely the signatures of $L$ and $L'$ (recall Remark \ref{rk:Signature} (i)), away from the zeros of the Alexander polynomials, we have
$$\sigma_L(\omega) =  \sigma_A(\omega) = \sigma_{A'}(\omega) = \sigma_{L'}(\omega),$$
and the corollary is proved.
\end{proof}

\subsection{The Fox-Milnor condition}
\label{sub:FoxMilnor}
In this section, after reviewing the classical case, we show that the Alexander polynomials of algebraically concordant families of matrices satisfy the Fox-Milnor condition. We then connect this with links by proving Corollary \ref{cor:FoxMilnor}.
\medskip

In the classical case of knots, the so-called \emph{Fox-Milnor condition} states that if two knots $K_1$ and $K_2$ are concordant, then their Alexander polynomial must satisfy 
$$\Delta_{K_1}(t)\Delta_{K_2}(t) \,\dot=\, h(t)h(t^{-1})$$
for some integer Laurent polynomial $h(t)$. This is fairly easy to establish once it has been shown that concordant knots are algebraically concordant. Indeed, the product of the Alexander polynomials of algebraically concordant knots can be computed via a metabolic Seifert matrix, and a direct computation shows that the Alexander polynomial associated to such a matrix is of the form $h(t)h(t^{-1})$. 

The Fox-Milnor condition has also been established for the multivariable Alexander polynomial of concordant links by Kawauchi \cite{Kawauchi} using homological techniques (see also \cite{NakagawaAP} for a proof via Fox calculus). This correspond to the case of ordered links, where the numbers of colors coincides with the number of components. As the Alexander polynomial of a colored link can be recovered by specializing the variables of the Alexander polynomial of the underlying ordered link, Kawauchi's result readily implies the Fox-Milnor condition for colored links.

The aim of the present section is to obtain an analogue of this result for families of matrices in $\mathcal{M}_\mu^\ast$, by generalizing the elementary proof from the classical case of knots. Before we do so, let us recall that for $\mu >1$, the Alexander polynomial of a family of matrices is invariant under generalized S-equivalence only up to multiplication by units of $\Lambda_S$. Therefore, the best we can aim for with our techniques is a Fox-Milnor condition up to multiplication by factors $(t_i-1)$. This is what we get.

\begin{proposition}
\label{prop:FoxMilnor}
Suppose that $A$ and $B$ are algebraically concordant families of matrices in $\mathcal{M}_\mu^\ast$. Then 
$\Delta_A(t_1, \ldots, t_\mu)\Delta_B(t_1, \ldots, t_\mu) \,\ddot =\, h(t_1, \ldots, t_\mu)h(t_1^{-1}, \ldots, t_\mu^{-1})$
for some integral Laurent polynomial $h \in \Lambda$.
\end{proposition}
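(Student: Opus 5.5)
We reduce the statement to the Witt-equivalence case by walking along the chain of families that defines algebraic concordance. Let $A = A_0, A_1, \ldots, A_n = B$ be families in $\mathcal{M}_\mu^\ast$ in which consecutive families are either generalized S-equivalent or Witt equivalent. We show by induction on $n$ that $\Delta_{A}(t)\Delta_{A_k}(t) \,\ddot=\, h_k(t)h_k(t^{-1})$ for some $h_k \in \Lambda$. The base case $k=0$ holds with $h_0 = \Delta_A$, using the symmetry $\Delta_A(t^{-1}) \,\dot=\, \Delta_A(t)$ from Remark \ref{rk:APnon0Knots} (ii).

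There are two kinds of inductive step.

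If $A_k$ and $A_{k+1}$ are generalized S-equivalent, then Lemma \ref{lemma:APInvariance} gives $\Delta_{A_{k+1}} \,\ddot=\, \Delta_{A_k}$, and we take $h_{k+1} = h_k$.

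If $A_k$ and $A_{k+1}$ are Witt equivalent, the key computation is that a metabolic family $M$ has $\Delta_M \,\dot=\, g(t)g(t^{-1})$ for some $g\in\Lambda$. We prove this as follows.
\begin{enumerate}[(i)]
\item After congruence by a unimodular $P$ independent of $\varepsilon$, write $M^\varepsilon = \bsm 0 & E^\varepsilon \\ F^\varepsilon & G^\varepsilon\esm$. The condition $M^{-\varepsilon} = (M^\varepsilon)^{\T}$ forces $F^\varepsilon = (E^{-\varepsilon})^{\T}$.
\item Since the construction $A \mapsto A(t)$ is linear and $P$ is independent of $\varepsilon$, the matrix $M(t)$ is congruent via $P$ to $\bsm 0 & E(t)\\ F(t) & G(t)\esm$. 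Here $\det P = \pm 1$.
\item The determinant of this block matrix is $\pm \det E(t)\det F(t)$.
\item For the factor $F(t)$, substitute $\varepsilon \mapsto -\varepsilon$ in its defining sum. The coefficient $\varepsilon_1\cdots\varepsilon_\mu$ changes by $(-1)^\mu$, and each $t_i^{(1-\varepsilon_i)/2}$ becomes $t_i^{(1+\varepsilon_i)/2} = t_i\cdot t_i^{-(1-\varepsilon_i)/2}$. Hence
$$F(t) = (-1)^\mu t_1\cdots t_\mu\, E(t^{-1})^{\T},$$
and so $\det F(t) \,\dot=\, \det E(t^{-1})$.
\item Therefore $\Delta_M \,\dot=\, g(t)g(t^{-1})$ with $g = \det E(t)$.
\end{enumerate}

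We now apply this to $M = A_k\oplus -A_{k+1}$, using multiplicativity from Remark \ref{rk:APnon0Knots} (i). Note that $\Delta_{-A_{k+1}} = \pm\Delta_{A_{k+1}}$. This gives
$$\Delta_{A_k}\Delta_{A_{k+1}} \,\dot=\, g(t)g(t^{-1}).$$
Multiplying by the inductive identity yields
$$\Delta_A\Delta_{A_k}^2\Delta_{A_{k+1}} \,\ddot=\, h_k(t)h_k(t^{-1})g(t)g(t^{-1}).$$

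The main obstacle is removing the square $\Delta_{A_k}^2$, since $h_k g$ need not be divisible by $\Delta_{A_k}$ in an obvious way. We plan to avoid division altogether by carrying a stronger invariant through the induction: for each $k$, some product $\Delta_A\Delta_{A_k}\cdot p_k(t)p_k(t^{-1})$ has the form $h_k(t)h_k(t^{-1})$, with $p_k \in \Lambda$ nonzero. This is an equality of the required norm form in the fraction field, and such an invariant is stable under both kinds of step.

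To conclude, we need to pass from this rational norm to an honest one. We use the symmetry of $\Delta_A\Delta_B$ and unique factorization in $\Lambda$, which is a UFD. If $\Delta_A\Delta_B$ is a norm in the fraction field, then every irreducible factor $q$ with $q \,\dot=\, q(t^{-1})$ (a \emph{symmetric} factor) must appear to even multiplicity. Non-symmetric factors pair up with their conjugates automatically, by the symmetry of $\Delta_A\Delta_B$. Assembling these factors produces the polynomial $h$. The parity argument for symmetric irreducible factors is the step we expect to require the most care.
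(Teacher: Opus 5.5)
Your proposal is correct. Your S-equivalence step and your metabolic computation $\Delta_M \,\dot=\, \det E(t)\det E(t^{-1})$ coincide with the paper's. The difference lies in how the unique-factorization argument is organized.

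The paper first proves that the Fox--Milnor relation is transitive on symmetric polynomials. From $\Delta_A\Delta_C\cdot\Delta_B\overline{\Delta_B} \,\ddot=\, fg\cdot\overline{fg}$ it divides out $\Delta_B\overline{\Delta_B}$ one prime at a time. Each prime $p$ of $\Delta_B$ divides $fg$ or $\overline{fg}$, and it is paired with $\overline{p}$ in the other factor. This stays inside $\Lambda_S$ and never needs to separate symmetric from non-symmetric primes.

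You instead carry the weaker invariant ``$\Delta_A\Delta_{A_k}$ is a norm from the fraction field'', which makes every inductive step trivial, and then descend once at the end. You should write out the update $p_{k+1}=\Delta_{A_k}p_k$, $h_{k+1}=h_kg$, which uses $\Delta_{A_k}^2 \,\ddot=\, \Delta_{A_k}\overline{\Delta_{A_k}}$.

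The descent is easier than you expect. For any nonzero $r$ and any prime $q$ with $\overline{q}$ associate to $q$, one has $v_q(r\overline{r}) = v_q(r)+v_{\overline{q}}(r) = 2v_q(r)$, so the parity claim is immediate.

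The one point to fix is that the valuation argument must be run in $\Lambda_S$, not in $\Lambda$ as you wrote. The symmetric primes $t_i-1$ can occur in $\Delta_A\Delta_B$ with odd multiplicity, and your hypotheses only hold up to units of $\Lambda_S$. Those primes are harmless only because they are units of $\Lambda_S$. The resulting $h\in\Lambda_S$ is then moved into $\Lambda$ by multiplying by a suitable power of $\prod_i(t_i-1)$.
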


\begin{proof}
We first show that the Fox-Milnor condition is transitive on the set of symmetric polynomials : if $\Delta_A(t)$, $\Delta_B(t)$ and $\Delta_C(t)$ are symmetric polynomials satisfying $\Delta_A(t) \Delta_B(t) \,\ddot =\, f(t)f(t^{-1})$ and $\Delta_B(t)\Delta_C(t) \,\ddot =\, g(t)g(t^{-1})$, then $\Delta_A(t)\Delta_C(t) \,\ddot =\, h(t)h(t^{-1})$ for some Laurent polynomial $h$. Indeed, under these hypotheses, as $\Delta_B(t)$ is symmetric, we have 
$$\Delta_A(t)\Delta_C(t)\cdot\Delta_B(t)\Delta_B(t^{-1}) \,\ddot =\, \Delta_A(t)\Delta_B(t)\cdot\Delta_B(t)\Delta_C(t) \,\ddot =\,  f(t)g(t)f(t^{-1})g(t^{-1}).$$
We claim that we can divide the right hand side by $\Delta_B(t)\Delta_B(t^{-1})$, and that the result is of the form $h(t)h(t^{-1})$. Indeed, if $p(t)$ is an irreducible polynomial appearing in the decomposition of $\Delta_B(t)$, then it divides either $f(t)g(t)$, or $f(t^{-1})g(t^{-1})$. In the first case, the factor $p(t^{-1})$, which appears in the decomposition of $\Delta_B(t^{-1})$ divides $f(t^{-1})g(t^{-1})$, and in the second case it divides $f(t)g(t)$. As the ring $\Lambda_S$ is a unique factorization domain, one can divide $\Delta_B(t)$ into such irreducible pieces, and after dividing by all of them, one ends up with 
$$\Delta_A(t)\Delta_C(t) \,\ddot =\, \frac{ f(t)g(t)f(t^{-1})g(t^{-1})}{\Delta_B(t)\Delta_B(t^{-1}) } =: h(t)h(t^{-1})$$
for some Laurent polynomial $h$.

Now that the transitivity of the Fox-Milnor condition has been established, the proof reduces to verifying that the Fox-Milnor condition is satisfied by Alexander polynomials of generalized S-equivalent and Witt equivalent families of matrices. We now verify this condition on the two equivalence relations, starting with generalized S-equivalence.

By Lemma \ref{lemma:APInvariance}, we know that if $A$ and $B$ are related by generalized S-equivalence, then their Alexander polynomials coincide up to multiplication by units of $\Lambda_S$. As Alexander polynomials are symmetric, we have 
$$\Delta_A(t)\Delta_B(t) \,\ddot =\, \Delta_A(t)\Delta_A(t) \,\ddot =\, \Delta_A(t)\Delta_A(t^{-1}),$$
as expected.

Finally, if $A$ and $B$ are Witt equivalent families of matrices, then the family of matrices $A\oplus -B$ is metabolic. Therefore, the family $A\oplus -B$ is congruent to a family of matrices of the form
$$
(A\oplus -B)^\varepsilon \sim \begin{pmatrix} 0 & C^\varepsilon \\ D^\varepsilon & E^\varepsilon \end{pmatrix},
$$
with $D^{-\varepsilon} = (C^\varepsilon)^{\T}$. Hence, 
\begin{equation}
\label{eq:APNorm}
(A\oplus -B)(t) \sim \begin{pmatrix} 0 & C(t) \\ D(t) & E(t) \end{pmatrix},
\end{equation}
where $C(t) = \sum_\varepsilon \varepsilon_1 \cdots \varepsilon_\mu t_1^\frac{1-\varepsilon_1}{2}\cdots t_\mu^{\frac{1-\varepsilon_\mu}{2}} C^\varepsilon$ and $D(t) = \sum_\varepsilon\varepsilon_1 \cdots \varepsilon_\mu t_1^\frac{1-\varepsilon_1}{2}\cdots t_\mu^{\frac{1-\varepsilon_\mu}{2}} D^\varepsilon$. As $D^{-\varepsilon} = (C^\varepsilon)^{\T}$, we have
$$D(t) = (-1)^\mu t C(t^{-1})^{\T}$$
and taking the determinant on both sides of $(\ref{eq:APNorm})$ yields
$$\Delta_A(t) \Delta_B(t) \,\dot=\, \det((A\oplus -B)(t)) \,\dot=\, \det(C(t)) \det( C(t^{-1})),$$
which is of the required form.
\end{proof}

We now connect with links, proving Corollary \ref{cor:FoxMilnor}. Note that if we drop the condition on $f$ and $g$, Corollary \ref{cor:FoxMilnor} is an immediate consequence of Propositions \ref{prop:AlgConcLinks} and \ref{prop:FoxMilnor}. However, in order to establish that $\vert f(1, \ldots, 1) \vert = \vert g(1, \ldots, 1)\vert = 1$, we need slightly finer arguments coming from the proof of Theorem \ref{prop:AlgConcLinks}.

\begin{proof}[Proof of Corollary \ref{cor:FoxMilnor}]
Suppose that $L$ and $L'$ are smoothly concordant colored links. Then by \cite[Lemma 1]{NakagawaAP}, there exists a link $L_{1/2}$ obtained from both $L$ and $L'$ by band summing an unlink. Arguing as in the proof of Theorem \ref{prop:AlgConcLinks}, given a family of generalized Seifert matrices $A$ for $L$, we build a family of generalized Seifert matrices $A_{1/2}$ for $L_{1/2}$. By Lemma \ref{lemma:A1/2APNon0}, the family $A_{1/2}$ satisfies 
$$\Delta_{A_{1/2}}(t) \,\ddot=\, \Delta_A(t)f(t)f(t^{-1})$$
for some Laurent polynomial $f \in \Lambda$ satisfying $\vert f(1, \ldots, 1) \vert = 1$. Similarly, turning the concordance upside down and starting with a family of generalized Seifert matrices $A'$ for $L'$, we build a family of generalized Seifert matrices $A'_{1/2}$ for $L_{1/2}$. Lemma \ref{lemma:A1/2APNon0} applies as well yielding
$$\Delta_{A'_{1/2}}(t) \,\ddot=\, \Delta_{A'}(t)g(t)g(t^{-1})$$
for some Laurent polynomial $g \in \Lambda$ satisfying $\vert g(1, \ldots, 1) \vert = 1$. 
As the families $A_{1/2}$ and $A'_{1/2}$ both are families of generalized Seifert matrices for $L_{1/2}$, their Alexander polynomials coincide up to multiplication by units of $\Lambda_S$. Thus we have
$$\Delta_A(t)f(t)f(t^{-1}) \,\ddot=\, \Delta_{A_{1/2}}(t) \,\ddot=\, \Delta_{A'_{1/2}}(t) \,\ddot=\, \Delta_{A'}(t)g(t)g(t^{-1}).$$
The corollary now follows as $A$ and $A'$ are families of generalized Seifert matrices for $L$ and $L'$ respectively.
\end{proof}

\begin{remark}
At first sight, the results of Proposition \ref{prop:FoxMilnor} and Corollary \ref{cor:FoxMilnor} might appear a little different. Indeed, the first one establishes an equality of the form
$$\Delta_A(t)\Delta_B(t) \,\ddot=\, h(t)h(t^{-1}),$$
while the second one gives an equality
\begin{equation}
\label{eq:FoxMilnor}
\Delta_L(t) f(t)f(t^{-1}) \,\ddot=\, \Delta_{L'}(t) g(t)g(t^{-1}).
\end{equation}
It is easy to see that, as Alexander polynomials are symmetric, these two formulations are in fact equivalent. However, the main difference between Proposition \ref{prop:FoxMilnor} and Corollary \ref{cor:FoxMilnor} resides in the fact that the second one deals with concordance of colored links, and contains a condition on the polynomials $f$ and $g$, while the first one holds in the more general context of algebraic concordance of families of matrices, and does not involve any condition on the polynomial $h$. 

In fact, an equality such as (\ref{eq:FoxMilnor}) with $\vert f(1, \ldots, 1) \vert = \vert g(1, \ldots, 1) \vert = 1$ does not hold in general for algebraically concordant families of $\mathcal{M}_\mu^\ast$. For instance, the matrix $A^+ = \bsm 0&2\\0&0 \esm$ determines a metabolic family $A=\{A^+, (A^+)^{\T}\}$ of $\mathcal{M}_1^\ast$. Thus, $A$ is algebraically concordant to the family $H$ of empty matrix. A direct computation shows that $\Delta_A(1) = 4$ and $\Delta_H(1) = 1$. Thus, the Alexander polynomials of the algebraically concordant families $A$ and $H$ are not related as in (\ref{eq:FoxMilnor}) with polynomials $f$ and $g$ satisfying $\vert f(1)\vert = \vert g(1) \vert = 1$.
\end{remark}

\subsection{The Blanchfield pairing}
\label{sub:Blanchfield}
In this section, after briefly recalling the classical Blanchfield pairing of a knot and its extension to colored links, we define the Blanchfield pairing of families of matrices in $\mathcal{M}_\mu^\ast$ and show that its Witt equivalence class is an algebraic concordance invariant. We then apply this result to links, proving Corollary \ref{cor:Blanchfield}.
\medskip

The Blanchfield pairing of a knot, introduced by Blanchfield in \cite{Bla57}, is a non singular hermitian pairing on the Alexander module of the knot. Even though it was originally defined homologically, it can be explicitely computed (an therefore defined) as follows. Given a Seifert matrix $A$ for a knot, the Blanchfield pairing is isometric to the pairing 
$$\Lambda^n / H^{\T} \Lambda^n \times \Lambda^n / H^{\T} \Lambda^n \to Q(t)/\Lambda, \quad ([x], [y]) \mapsto [-x^{\T}H^{-1}\overline{y}],$$
where $H = (1-t)A^{\T} + (1-t^{-1})A$. Because of its homological definition, the isometry class of the Blanchfield pairing is a knot invariant. Moreover, Kearton \cite{Kea75} showed that a knot is algebraically slice if and only if its Blanchfield pairing is \emph{metabolic}, a notion that will be made precise in this section. Note that the \emph{only if} part follows immediately from the fact that the Blanchfield pairing can be represented by a metabolic matrix.

The Blanchfield pairing extendeds to colored links, see e.g. \cite[Section 2.4]{hillman2012}. In \cite{CFT18}, Theorem 1.2 provides an explicit formula for the computation of the Blanchfield pairing using totally connected C-complexes. The totally connected hypothesis was later showed to be unnecessary by Conway, see \cite[Theorem 1.1]{ConwayBlanchfield}. The Witt equivalence class of the Blanchfield pairing of ordered links is a concordance invariant, see \cite[Theorem 2.4]{hillman2012}. The proof is homological.

The goal of this section is to use the explicit formula from \cite{CFT18} to define a Blanchfield pairing for all families of matrices in $\mathcal{M}_\mu^\ast$. We then show that the Witt class of this pairing is an algebraic concordance invariant.
\medskip

We start with a definition, inspired by \cite{CFT18}. In order to simplify the notation, we write $H_A:= H_A(t)$, and denote by $x \mapsto \overline{x}$ the involution of $\Lambda_S$ inverting the powers of all $t_i$. We write $Q$ for the field of fraction of $\Lambda_S$.

\begin{definition}
\label{def:Blanchfield}
The \emph{Blanchfield pairing} of a family of matrices $A \in \mathcal{M}_\mu^\ast$ is the non degenerate hermitian pairing
$$\Bl_A \colon \Lambda^n_S / H_A^{\T} \Lambda_S^n \times \Lambda^n_S / H_A^{\T} \Lambda^n_S \to Q/\Lambda_S, \quad ([x], [y]) \mapsto [-x^{\T}H_A^{-1}\overline{y}].$$
\end{definition}

\begin{remark}
\label{rk:Blanchfield}
\begin{enumerate}[(i)]
\item The notation $H_A^{-1}$ makes sense as the Alexander polynomial $\Delta_A(t) \,\ddot=\, \det(H_A)$ is non trivial.
\item A direct computation shows that the Blanchfield pairing is well defined and indeed non degenerate. The presence of the word hermitian in the definition is clear as $H_A$ itself is hermitian.
\item In the case where $A$ is a family of generalized Seifert matrices for a colored link $L$, the Blanchfield pairing $\Bl_A$ coincides with the Blanchfield pairing of $L$ over $\Lambda_S$, by \cite[Theorem 1.2]{CFT18}
\end{enumerate}
\end{remark}

As already stated, the Blanchfield pairing itself is not a concordance invariant, but its Witt equivalence class is, a notion that we now briefly recall. Details can be found in \cite[Chapters 2.3 and 2.8]{hillman2012} or in \cite{GSThesis}. A \emph{linking pairing} over $\Lambda_S$ is a hermitian pairing $B \colon M \times M \to Q/\Lambda_S$ for a certain torsion $\Lambda_S$-module $M$. A non degenerate linking pairing is called $\emph{metabolic}$ if there is a submodule $N$ of $M$ such that $N = N^{\perp} := \{ x \in M \vert B(x,y) = 0 \text{ for all } y \in N\}$. Two non degenerate linking pairings $B$ and $B'$ are called \emph{Witt equivalent} if there exists metabolic pairings $C$ and $C'$ such that $B \oplus C$ and $B' \oplus C'$ are isometric. It is well known that Witt equivalence is an equivalence relation on the set of non degenerate linking pairings, and that Witt equivalence classes of non degenerate linking pairings form an abelian group for the operation induced by direct sum (see for instance \cite[Theorem 2.3]{hillman2012} for a formulation in the non singular case, or \cite{GSThesis} for non degeneracy). The neutral element corresponds to the class of metabolic pairings, and the opposite class of a pairing $B$ is given by the class of the opposite pairing $-B$. We denote this group by $W(Q/\Lambda_S)$.

Using this notion, we have the following proposition.

\begin{proposition}
\label{prop:InvarianceBlanchfield}
The map $\Bl \colon \mathcal{C}_\mu^\alg \to W(Q/\Lambda_S)$, which associates to the class of a family of matrices the class of its Blanchfield pairing, is a well defined group homomorphism.
\end{proposition}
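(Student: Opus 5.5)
Since $\mathcal{C}_\mu^\alg$ is the quotient of $\mathcal{M}_\mu^\ast$ by the equivalence relation generated by generalized S-equivalence and Witt equivalence, the plan is to check that the Witt class of $\Bl_A$ is unchanged by each generator, and then to check additivity.

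Additivity is immediate: $H_{A\oplus B} = H_A \oplus H_B$, so $\Bl_{A\oplus B} = \Bl_A \oplus \Bl_B$. Moreover, $H_{-A} = -H_A$ gives $\Bl_{-A} = -\Bl_A$. The empty family maps to the zero pairing. So once well-definedness is established, the map is a homomorphism.

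\emph{Unimodular congruence.} If $B^\varepsilon = C A^\varepsilon C^{\T}$ with $C$ independent of $\varepsilon$, then $H_B = C H_A C^{\T}$. Hence $x \mapsto C^{-\T}x$ (or its transpose variant) gives an isometry between the two presentations.

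\emph{Moves (T1)--(T4).} Here I would invoke the computations of \cite{CFT18} and \cite{ConwayBlanchfield}. They prove that the explicit formula gives the Blanchfield pairing of the link, independently of the C-complex, and this relies on algebraic manipulations of the matrices $H$ under the moves. The plan is to redo these manipulations purely algebraically over $\Lambda_S$. For (T1) and (T2), after the move $H_B$ contains a $2\times 2$ block
$$\begin{pmatrix} 0 & u \\ \bar u & \ast \end{pmatrix},$$
where $u$ is a product of factors $(1-t_i)^{\pm}$ times a monomial, hence a unit of $\Lambda_S$. Row and column operations over $\Lambda_S$, which are allowed because $u$ is invertible, split this off as a hyperbolic block with unit determinant. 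That block presents the zero module, so $\Bl_B \cong \Bl_A$. For (T3) and (T4), $H_A$ and $H_B$ should be congruent over $\Lambda_S$; this is the content of the corresponding computations in \cite{C-F08} and \cite{DMO21}, which show the hermitian matrices are congruent up to such unit blocks. A congruence $PH_AP^*=H_B$ with $P$ invertible over $\Lambda_S$ induces an isometry of the pairings. I expect this step to be the main obstacle: one has to make sure that all congruences in the literature are realized over $\Lambda_S$ and not merely over $Q$.

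\emph{Witt equivalence.} It suffices to show that a metabolic family $M$ in $\mathcal{M}_\mu^\ast$ has a metabolic Blanchfield pairing. Indeed, if $A\oplus -B$ is metabolic, then $\Bl_A \oplus -\Bl_B$ is metabolic, and hence $[\Bl_A]=[\Bl_B]$ in $W(Q/\Lambda_S)$. Up to congruence, $H_M$ has the form
$$\begin{pmatrix} 0 & P \\ P^* & R\end{pmatrix},$$
with $\det P \neq 0$ because $\Delta_M \neq 0$. Take $N$ to be the image in $\Lambda_S^{2k}/H_M^{\T}\Lambda_S^{2k}$ of the vectors whose first-half coordinates vanish, or the appropriate half dictated by the block shape of $H_M^{-1}$. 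The inverse has the form
$$\begin{pmatrix} \ast & (P^*)^{-1} \\ P^{-1} & 0\end{pmatrix},$$
so $\Bl$ vanishes on $N$. Showing $N^\perp = N$ is the standard argument from the knot case (\cite{Kea75}, \cite{hillman2012}). It uses that $\Lambda_S$ is a UFD and that orders multiply in short exact sequences: the order of the module is $\det P\,\overline{\det P}$, and $N$ has order $\det P$. If this exact equality is troublesome, I would instead use the non-degeneracy version of metabolicity from \cite{GSThesis}, comparing orders of $N$ and $N^\perp$.
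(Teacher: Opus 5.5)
Your route is the paper's. You check invariance under unimodular congruence and the moves (T1)--(T4), then invariance under Witt equivalence, and get additivity from $H_{A\oplus B}=H_A\oplus H_B$. For (T1)/(T2), your splitting off of a unit block is the paper's ``elementary enlargement'' argument. For (T3)/(T4), the paper writes $A(t)$ and $A'(t)$ out explicitly and then simply asserts that they are congruent over $\Lambda_S$. That congruence does need $(1-t_i)^{-1}$, so your concern is the right one, and it is settled by that direct computation. One small slip: the isometry induced by $H_B=CH_AC^{\T}$ is $[x]\mapsto[Cx]$, from $\Lambda_S^n/H_A^{\T}\Lambda_S^n$ to $\Lambda_S^n/H_B^{\T}\Lambda_S^n$. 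In the Witt step, the paper passes from the block shape of $H_A\oplus -H_{A'}$ to the block shape of its inverse. It then cites Friedl's thesis for the fact that a pairing presented by a non-degenerate metabolic matrix is metabolic. You correctly find the isotropic submodule $N$ from the zero block of $H_M^{-1}$.

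The gap is in how you justify $N^\perp=N$. Counting orders cannot give this over $\Lambda_S$, which is not a PID, not even for $\mu=1$. From $N\subseteq N^\perp$ and equal orders you only learn that $N^\perp/N$ has order $1$, i.e.\ that it is pseudo-null. Such modules need not vanish: for instance $\Lambda_S/(3,t_1+1)\neq 0$ has order $1$. Even computing the order of $N^\perp$ is not automatic. Since $Q/\Lambda_S$ is not an injective $\Lambda_S$-module, the restriction map $\operatorname{Hom}(M,Q/\Lambda_S)\to\operatorname{Hom}(N,Q/\Lambda_S)$ need not be onto without further argument. Your fallback does not help either, because ``metabolic'' here means $N=N^\perp$ exactly.

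The step is instead a direct computation. Since $H:=H_M$ is hermitian, $H^{\T}=\overline{H}$. For $x=(0,x_2)$ and $y=(y_1,y_2)$, the block form of $H^{-1}$ gives
\[
\Bl_M([x],[y])=-x_2^{\T}P^{-1}\overline{y_1}.
\]
Hence $[y]\in N^\perp$ if and only if $P^{-1}\overline{y_1}\in\Lambda_S^k$, i.e.\ $y_1=\overline{P}z$ for some $z\in\Lambda_S^k$. In that case $y-\overline{H}\,(0,z)^{\T}=(0,\,y_2-\overline{R}z)$, so $[y]\in N$. Thus $N^\perp\subseteq N$, and together with isotropy this gives $N=N^\perp$. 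With this replacement your proof is complete.
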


\begin{remark}
By remark \ref{rk:AlgConcGroupBig} (ii), for $\mu = 1$, the group $\mathcal{C}_1^\alg$ contains the classical algebraic concordance group of knots $\mathcal{C}^\alg_\mathit{knot}$. The map of Proposition \ref{prop:InvarianceBlanchfield} is compatible with the classical Blanchfield pairing of knots in the sense that the following diagram commutes : 
$$\begin{tikzcd}
\mathcal{C}^\alg_\mathit{knot} \arrow{r} \arrow{d}{\Bl_\mathit{knot}} & \mathcal{C}_1^\alg \arrow{d}{\Bl}\\
W_{\text{n.s.}}(Q/\Lambda) \arrow{r} & W(Q/\Lambda_S).
\end{tikzcd}$$
The top horizontal map is induced by the inclusion, see Remark \ref{rk:AlgConcGroupBig} (ii). The first vertical map is given by the classical Blanchfield pairings of knots \cite{Bla57}, while the second one is the map from Proposition \ref{prop:InvarianceBlanchfield}. The bottom left group denotes the Witt group of non singular linking forms over $\Lambda$, and the bottom horizontal map is induced by the inclusion of $\Lambda$ into $\Lambda_S$. The fact that the diagram commutes follows from the fact that Definition \ref{def:Blanchfield} extends a well known formula for the Blanchfield pairing of knots, see \cite{CFT18}.
\end{remark}

\begin{proof}[Proof of Proposition \ref{prop:InvarianceBlanchfield}]
To prove that this map is well defined, we show that generalized S-equivalent and Witt equivalent families of matrices have Witt equivalent Blanchfield pairings. 
We start with generalized S-equivalence. Suppose that $A$ and $A' \in \mathcal{M}_\mu^\ast$ are related by one of the transformations (T1) through (T4). By \cite[Proof of Theorem 2.1]{C-F08} (see also \cite[p.14]{ConwayBlanchfield}), if $A'$ is obtained from $A$ by one of the transformations (T1) or (T2), then $H_{A'}$ and $H_A$ are related by \emph{elementary enlargements}, i.e. transformations of the form
$$H \mapsto \begin{pmatrix} H & \xi & 0 \\ \overline{\xi}^{\T} & \lambda & \alpha \\ 0 & \overline{\alpha} & 0\end{pmatrix},$$
with $\alpha$ a unit of $\Lambda_S$. Up to a change of basis, elementary enlargements correspond to the transformation $H \mapsto H \oplus \bsm0&1\\1&0\esm$. A direct computation shows that this transformation does not change the isometry type of the Blanchfield form. Thus, the isometry class of the Blanchfield form is unchanged by the transformations (T1) and (T2).

Following \cite{C-F08}, the third transformation can be dealt with similarly. For the sake of completeness, we give details. By definition, if $A$ and $A' \in \mathcal{M}_\mu^\ast$ are related by a transformation (T3), then the matrices $A(t)$ and $A'(t)$ are related via
$$A(t) = \begin{pmatrix}
A_0 & -\sigma_it_i^\frac{1+\sigma_i}{2} \prod_{l\neq i}(1-t_l)\chi(z) & \xi\\
\sigma_it_i^\frac{1-\sigma_i}{2} \prod_{l\neq i} (1-t_l)\chi(z)^{\T} & 0 & \sigma_i\sigma_jt_i^\frac{1-\sigma_i}{2}t_j^\frac{1-\sigma_j}{2} \prod_{l\neq i,j} (1-t_l)\\
\overline{\xi}^{\T} & \sigma_i\sigma_jt_i^\frac{1+\sigma_i}{2} t_j^\frac{1+\sigma_j}{2} \prod_{l\neq i,j}(1-t_l) & \ell
\end{pmatrix}$$
and $A'(t)$ the matrix
$$\footnotesize\begin{pmatrix}
A_0 & -\sigma_kt_k^\frac{1+\sigma_k}{2} \prod_{l\neq k}(1-t_l)\chi(z) & \xi + \sigma_i\sigma_kt_i^\frac{1-\sigma_i}{2}t_k^\frac{1+\sigma_k}{2}\prod_{l\neq i,k}(1-t_l) \chi(z)\\
\sigma_kt_k^\frac{1-\sigma_k}{2} \prod_{l\neq k} (1-t_l)\chi(z)^{\T} & 0 & \sigma_j\sigma_kt_j^\frac{1-\sigma_j}{2}t_k^\frac{1-\sigma_k}{2} \prod_{l\neq j,k} (1-t_l)\\
\overline{\xi}^{\T} +\sigma_i\sigma_kt_i^\frac{1+\sigma_i}{2}t_k^\frac{1-\sigma_k}{2}\prod_{l\neq i,k}(1-t_l) \chi(z)^{\T} & \sigma_j\sigma_kt_j^\frac{1+\sigma_j}{2} t_k^\frac{1+\sigma_k}{2} \prod_{l\neq j,k}(1-t_l) & \ell'
\end{pmatrix},$$
where 
$$\ell' = \ell + \sigma_i\sigma_j\sigma_k\left(t_i^\frac{1+\sigma_i}{2} t_j^\frac{1-\sigma_j}{2} t_k^\frac{1-\sigma_k}{2} -  t_i^\frac{1-\sigma_i}{2} t_j^\frac{1+\sigma_j}{2} t_k^\frac{1+\sigma_k}{2}\right)\prod_{l\neq i,j,k}(1-t_l).$$
A direct computation shows that the matrix $A'(t)$ is congruent over $\Lambda_S$ to the matrix
$$\begin{pmatrix}
A_0 & -\sigma_kt_k^\frac{1+\sigma_k}{2} \prod_{l\neq k}(1-t_l)\chi(z) & \xi\\
\sigma_kt_k^\frac{1-\sigma_k}{2} \prod_{l\neq k} (1-t_l)\chi(z)^{\T} & 0 & \sigma_j\sigma_kt_j^\frac{1-\sigma_j}{2}t_k^\frac{1-\sigma_k}{2} \prod_{l\neq j,k} (1-t_l)\\
\overline{\xi}^{\T} & \sigma_j\sigma_kt_j^\frac{1+\sigma_j}{2} t_k^\frac{1+\sigma_k}{2} \prod_{l\neq j,k}(1-t_l) & \ell
\end{pmatrix},$$
which itself is clearly congruent to $A(t)$ over $\Lambda_S$. Therefore, the matrices $A(t)$ and $A'(t)$ are congruent over $\Lambda_S$, and it follows that the matrices $H_A$ and $H_{A'}$ are also congruent over $\Lambda_S$. Thus, the induced Blanchfield forms are isometric. 

The last movement (T4) is dealt with similarly. Again, we give details for completeness. Suppose that $A$ and $A'$ are related by a transformation (T4). Then the matrices $A(t)$ and $A'(t)$ are related by 
$$A(t) = \begin{pmatrix}
A_0 & - \sigma_it_i^\frac{1+\sigma_i}{2}\prod_{l\neq i}(1-t_l) \chi(z) & \xi\\
\sigma_i t_i^\frac{1-\sigma_i}{2} \prod_{l\neq i}(1-t_l) \chi(z)^{\T} & 0 & \sigma_i\sigma_jt_i^\frac{1-\sigma_i}{2}t_j^\frac{1-\sigma_j}{2}\prod_{l\neq i,j}(1-t_l) \\
\overline{\xi}^{\T} & \sigma_i\sigma_j t_i^\frac{1+\sigma_i}{2} t_j^\frac{1+\sigma_j}{2}\prod_{l\neq i,j} (1-t_l) & \ell
\end{pmatrix}$$
and
$$A'(t) = \begin{pmatrix}
A_0 & \mp\sigma_i t_i^\frac{1\pm\sigma_i}{2} \prod_{l \neq i} (1-t_l) \chi(z') & \xi + n \prod_l(1-t_l) \chi(z) \\
\pm \sigma_i t_i^\frac{1\mp\sigma_i}{2} \prod_{l\neq i} (1-t_l) \chi(z)^{\T} & 0 & \pm\sigma_i \sigma_j t_i^\frac{1\mp\sigma_i}{2} t_j^\frac{1-\sigma_j}{2} \prod_{l\neq i,j} (1-t_l)\\
\overline{\xi}^{\T} + n \prod_l (1-t_l) \chi(z')^{\T} & \pm \sigma_i\sigma_j t_i^\frac{1\pm \sigma_i}{2} t_j^\frac{1+\sigma_j}{2} \prod_{l\neq i,j} (1-t_l) & \ell + n\prod_l(1-t_l)
\end{pmatrix}.$$
It is not difficult to see that these two matrices are congruent over $\Lambda_S$. Thus, the matrices $H_A$ and $H_{A'}$ are congruent over $\Lambda_S$ and it follows that the corresponding Blanchfield forms are isometric.

Therefore, the isometry class of the Blanchfield form is unchanged by the transformations (T1) through (T4) and thus by generalized S-equivalence. As isometric linking pairings are Witt equivalent (a fact which is immediate from the definition), the Witt equivalence class of the Blanchfield form is invariant by generalized S-equivalence.

We now turn to Witt equivalence of families of matrices. Suppose that two families of matrices $A$ and $A'$ in $\mathcal{M}_\mu^\ast$ are Witt equivalent. Then the direct sum $H_A \oplus -H_{A'}$ is metabolic. Thus, the matrix $(H_A \oplus -H_{A'})^{-1} = H_A^{-1} \oplus -H_{A'}^{-1}$, which represents the pairing $\Bl_A \oplus \Bl_{A'}$ is also metabolic. As the pairing $\Bl_A \oplus \Bl_{A'}$ is represented by a non degenerate metabolic matrix, it is metabolic (see for instance \cite[Appendix C]{FriedlThesis}. Hence, the standard isometry
$$\Bl_A \oplus (\Bl_{A'} \oplus -\Bl_{A'}) \cong \Bl_{A'} \oplus (\Bl_A \oplus -\Bl_{A'})$$
implies that $\Bl_A$ and $\Bl_{A'}$ are Witt equivalent linking forms. 

The fact that $\Bl$ is a homomorphism is immediate as $H_{A \oplus B} = H_{A} \oplus H_{B}$ gives a matrix representing both pairings $\Bl_{A\oplus B}$ and $\Bl_A \oplus \Bl_B$.
\end{proof} 

The fact that smoothly concordant colored links have Witt equivalent Blanchfield pairings now immediately follows from Propositions \ref{prop:AlgConcLinks} and \ref{prop:InvarianceBlanchfield}.

\begin{proof}[Proof of Corollary \ref{cor:Blanchfield}]
Suppose that $L$ and $L'$ are smoothly concordant colored links. Then by Theorem \ref{prop:AlgConcLinks}, they are algebraically concordant. Thus, any of their families of generalized Seifert matrices $A$ and $A'$ are algebraically concordant. By Proposition \ref{prop:InvarianceBlanchfield}, the corresponding Blanchfield forms $\Bl_A$ and $\Bl_{A'}$ are Witt equivalent. By Remark \ref{rk:Blanchfield} (ii), these coincide with the Blanchfield pairings of $L$ and $L'$ over $\Lambda_S$.
\end{proof}

\subsection{Structure of $\mathcal{C}_\mu^\alg$}
\label{sub:Structure}
In this section, we use the signature and the Fox-Milnor condition to prove Theorem \ref{prop:StructureAlgConcGroup}.

Our first step toward the proof of Theorem \ref{prop:StructureAlgConcGroup} is to use the signature to show that $\mathcal{C}_\mu^{\alg}$ contains a summand isomorphic to $\Z^\infty$ for all $\mu \geq 1$. To do so, we copy the classical strategy from the case of knots, which consists of finding an infinite family of knots $(K_i)_{i\geq 1}$, with linearly independent signatures. 

From such a family of knots, it is easy to produce a family of colored link with the same property by attaching an unlink. More precisely, given any $1$-colored link $K$, and any number of colors $\mu \geq 1$, we consider a colored link $K_\mu$ as depicted in the left of Figure \ref{fig:FakeLink}. Note that the isotopy class of $K_\mu$ depends on the choice of a component of $K$. However, for any choice of component, this link has the following useful property.

\begin{lemma}
\label{lemma:FakeLink}
Given any Seifert matrix $A$ for a one colored link $K$, and any integer $\mu \geq 1$, the $\mu$-colored link $K_\mu$ admits the family $A_\mu = \{A, \ldots, A, A^{\T}, \ldots, A^{\T}\}$ as a family of generalized Seifert matrices. In particular, the signatures $\sigma_{A_\mu}(\omega_1, \ldots, \omega_\mu)$ and $\sigma_K(\omega_1)$ coincide and the Alexander polynomials $\Delta_{A_\mu}(t_1, \ldots, t_\mu)$ and $\Delta_K(t_1)$ coincide up to multiplication by units of $\Lambda_S$.
\end{lemma}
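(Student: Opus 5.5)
Since Figure~\ref{fig:FakeLink} is not reproduced here, I assume $K_\mu$ is $K$ (color $1$) with a $(\mu-1)$-component unknot-chain attached. Concretely, $K_\mu$ is $K$ together with unknots $U_2,\dots,U_\mu$ of colors $2,\dots,\mu$. The unknot $U_2$ clasps a chosen component of $K$ once, and each $U_j$ clasps $U_{j-1}$ once, so that the whole picture is a connected sum of $K$ with Hopf-type clasps.

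The plan is to build an explicit C-complex and read off its generalized Seifert forms. Take a Seifert surface $F_1$ for $K$ (connected, after tubing if necessary) and discs $D_2,\dots,D_\mu$ bounded by $U_2,\dots,U_\mu$. Arrange that $D_2$ meets $F_1$ in a single clasp near the chosen component, and that $D_j$ meets $D_{j-1}$ in a single clasp. The result $F=F_1\cup D_2\cup\dots\cup D_\mu$ is connected, has only clasp intersections, and has no triple points, so it is a C-complex. Since it is a tree of surfaces glued along clasps (each clasp is a contractible arc), $H_1(F)\cong H_1(F_1)$. I would then choose a basis of curves on $F_1$ that avoid a neighbourhood of the clasp arc.

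For such curves $x,y$, any push-off $x^\varepsilon$ only involves the $\varepsilon_1$-normal direction of $F_1$. The other discs are disjoint from $x$, and they lie in a small ball near the boundary of $F_1$ that does not link the basis curves. Hence $\alpha^\varepsilon(x,y)=\lk(x^{\varepsilon_1},y)$, which equals the entry of $A$ or of $A^{\T}$ according to $\varepsilon_1=-$ or $+$ (matching the convention that $A^-$ is the classical Seifert matrix). This gives the family $A_\mu$, with $A$ repeated for the $2^{\mu-1}$ signs having $\varepsilon_1=-$ and $A^{\T}$ for the others. The main step to justify carefully is that the added discs do not change linking numbers of push-offs. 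The argument is that the curves can be isotoped in $F_1$ away from the clasp, and the discs $D_j$ sit in a ball meeting $F_1$ only near its boundary.

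Next I would compute $A_\mu(t)$ directly from Definition~\ref{def:MAP}. Grouping by $\varepsilon_1$ gives
$$A_\mu(t)=\Big(\sum_{\varepsilon_2,\dots,\varepsilon_\mu}\prod_{i\ge2}\varepsilon_i t_i^{\frac{1-\varepsilon_i}{2}}\Big)\big(A^{\T}-t_1A\big)=\prod_{i\ge 2}(1-t_i)\,(A^{\T}-t_1A).$$
Taking determinants yields $\Delta_{A_\mu}(t)=\prod_{i\ge2}(1-t_i)^n\det(A^{\T}-t_1A)$, which is $\Delta_K(t_1)$ up to units of $\Lambda_S$.

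For the signature, the same computation gives
$$H_{A_\mu}(\omega)=\prod_{i\ge2}|1-\omega_i|^2\,(1-\overline{\omega_1})(A^{\T}-\omega_1A).$$
This is a positive multiple of $(1-\overline{\omega_1})A^{\T}+(1-\omega_1)A$, so $\sigma_{A_\mu}(\omega)=\sigma_K(\omega_1)$.
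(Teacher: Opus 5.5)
Your proof is correct and follows the paper's argument. The Seifert surface for $K$ induces a C-complex whose generalized Seifert forms depend only on $\varepsilon_1$, so $A_\mu(t)=\prod_{i\ge2}(1-t_i)\,(A^{\T}-t_1A)$, and $H_{A_\mu}(\omega)$ is a positive multiple of the knot's hermitian matrix. The paper writes this positive factor as $\prod_{i\geq 2}2\Re(1-\omega_i)$, which equals your $\prod_{i\ge2}|1-\omega_i|^2$.

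The step you flag as delicate is actually immediate. Linking numbers of push-offs depend only on the curves themselves. The added discs only need to be missed by the push-offs, and this holds once the basis curves avoid the clasp arc.
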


\begin{figure}[h]
\centering
\begin{overpic}[width=10cm]{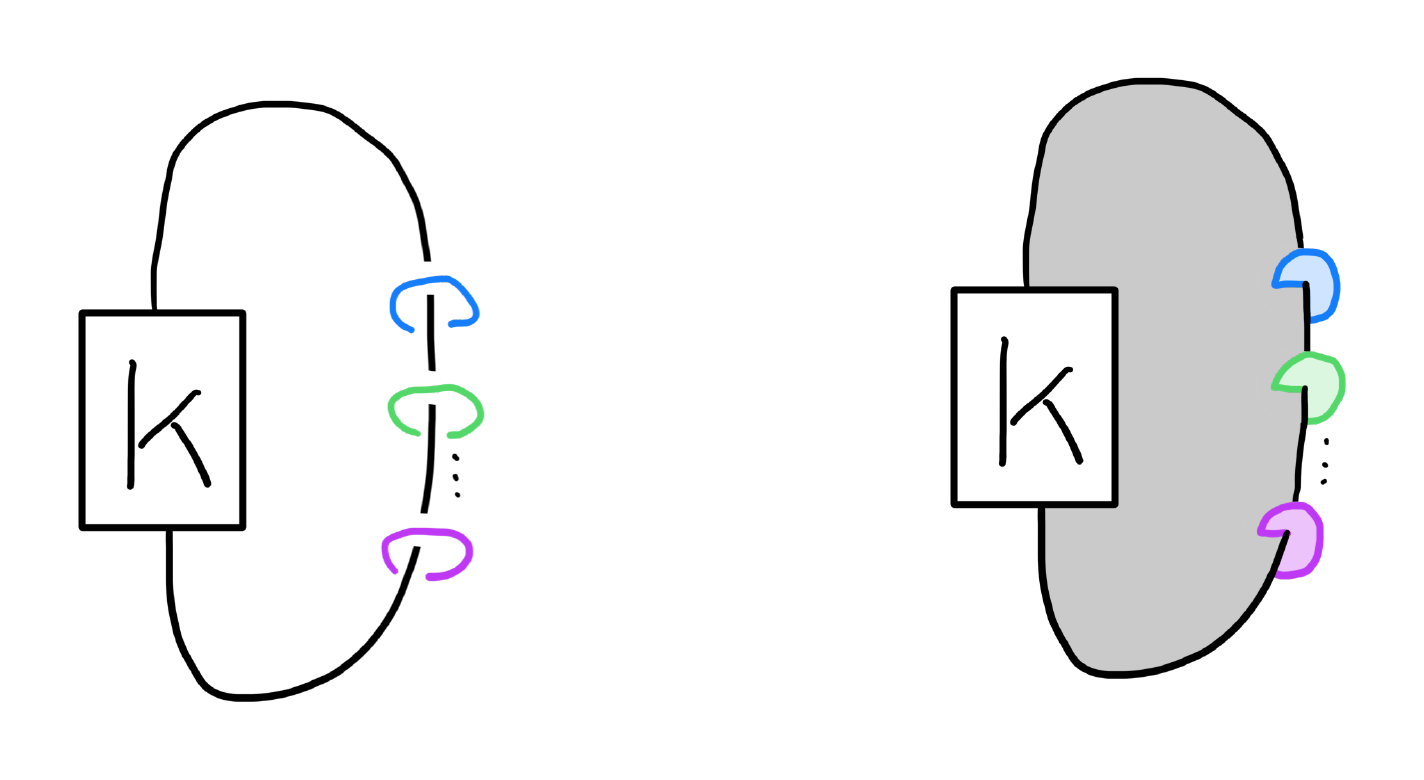}\end{overpic}
\caption{Left, the link $K_\mu$ obtained from $K$ by adding $\mu -1$ unlinked components. Right, a C-complex for $K_\mu$ induced by a Seifert surface for $K$.}
\label{fig:FakeLink}
\end{figure}

\begin{proof}
Observe that any Seifert surface for $K$ naturally induces a C-complex for $K_\mu$, shown in the right of Figure \ref{fig:FakeLink}. It follows that for any Seifert matrix $A$ for $K$, the family 
$$A_\mu:=\{A, \ldots, A, A^{\T}, \ldots, A^{\T}\}$$
is a family of generalized Seifert matrices for $K_\mu$. Moreover, a direct computation shows that 
$$H_{A_\mu}(\omega_1, \ldots, \omega_\mu) =  \left((1-\overline{\omega_1})A + (1-\omega_1)A^{\T}\right) \cdot \sum_{\varepsilon \in \{\pm\}^{\mu-1}} \prod_{i=2}^\mu (1-\overline{\omega_i}^{\varepsilon_i}).$$
The sum on the right equals
$$ \sum_{\varepsilon \in \{\pm\}^{\mu-1}} \prod_{i=2}^\mu (1-\overline{\omega_i}^{\varepsilon_i}) = \prod_{i=1}^\mu 2\Re(1-\omega_i),$$
which is a positive real number as $\omega_i \in S^1\setminus \{1\}$. It follows that the signature of $A_\mu$ at $(\omega_1, \ldots, \omega_\mu)$ coincides with the signature of the matrix $(1-\overline{\omega_1})A + (1-\omega_1) A^{\T}$, which by definition is the signature of $K$ at $\omega_1$. Thus, we have the equality
$$\sigma_{A_\mu}(\omega_1, \ldots, \omega_\mu) = \sigma_K(\omega_1),$$
for any $(\omega_1, \ldots, \omega_\mu) \in (S^1\setminus \{1\})^\mu$.

Similarly, we get 
$$\Delta_{A_\mu}(t_1, \ldots, t_\mu) = \det \left( (A- t_1 A^{\T})\prod_{i=2}^\mu(t_i-1)\right) =\Delta_K(t_1) \prod_{i=2}^\mu (t_i-1)^n.$$
Thus, the Alexander polynomial of $A$ and $K$ coincide up to multiplication by factors $t_i-1$. In particular, they are equal up to multiplication by units of $\Lambda_S$.
\end{proof}

In order to produce a family of colored links with independent signatures, we will apply the construction of Lemma \ref{lemma:FakeLink} to a family of knots with independent signatures. To introduce such a family, we consider the following examples, which are taken from \cite{LivingstonKnotConcordance}. 

\begin{example}
\label{example:K(a,b,c)}
Consider the black knot $K(a,b,c)$ shown in Figure \ref{fig:K(a,b,c)}. For the obvious Seifert surface, it admits the matrix 
$$A = \begin{pmatrix} a & (c+1)/2 \\ (c-1)/2 & b \end{pmatrix}$$
as a Seifert matrix. If the gray component is added, one obtain the $1$-colored link $\tilde K(a,b,c)$, which admits the matrix $A \oplus (-1)$ as a Seifert matrix.

\begin{figure}[h!]
\centering
\begin{overpic}[width=12cm]{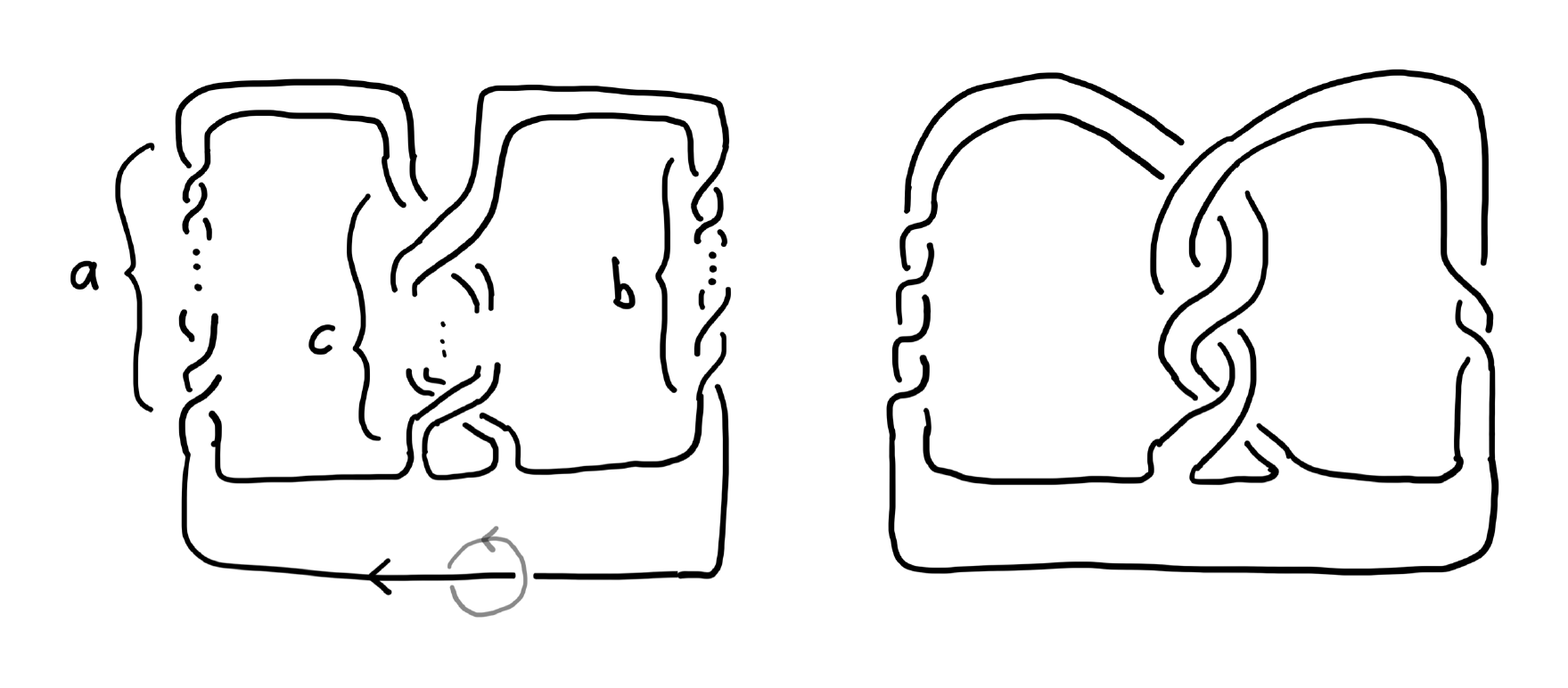}\end{overpic}
\caption{In black, the knot $K(a,b,c)$, where the integers $a$ and $b$ indicate the number of full twists in each band.
The integer $c$ is odd and represents the number of half twists between the bands. The link $\tilde K(a, b, c)$ is obtained by taking the union of $K(a, b, c)$ with the gray circle. Right, the knot $K(2,-1,3)$ as an example.}
\label{fig:K(a,b,c)}
\end{figure}

\end{example}

Using these constructions, we prove the existence of a $\Z^\infty$ summand in $\mathcal{C}_\mu^{\alg}$. 

\begin{lemma}
\label{lemma:ZinfSummand}
For all $\mu \geq 1$, the group $\mathcal{C}_\mu^\alg$ contains a summand isomorphic to $\Z^\infty$
\end{lemma}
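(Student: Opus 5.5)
The plan is to build a surjective homomorphism from $\mathcal{C}_\mu^\alg$ onto a free abelian group of infinite rank; such a quotient splits, which gives the summand. The homomorphism will be assembled from jumps of signatures, and Proposition \ref{prop:SignatureAlgConcInvariant} will guarantee that it is well defined.

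First I fix an infinite set of distinct points $\omega^{(1)}, \omega^{(2)}, \ldots$ on the upper half of $S^1\setminus\{1\}$. For each $k$ I choose a knot $K_k = K(a_k,b_k,c_k)$ from Example \ref{example:K(a,b,c)} such that:
\begin{itemize}
\item[(a)] the Alexander polynomial $\Delta_{K_k}(t)=\det(A-tA^{\T})$ is quadratic, irreducible over $\Q$, and has a simple pair of conjugate roots on the unit circle at $\omega^{(k)}$ and $\overline{\omega^{(k)}}$;
\item[(b)] these roots are pairwise distinct for different $k$.
\end{itemize}
This is the classical choice from \cite{LivingstonKnotConcordance}. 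Since the Levine--Tristram signature of $K_k$ is locally constant away from the roots and jumps by $\pm2$ across a simple root, the jump function of $K_k$ is nonzero exactly at $\omega^{(k)}$ on the upper half circle.

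Next I apply Lemma \ref{lemma:FakeLink} to get families $A_{k,\mu}\in\mathcal{M}_\mu^\ast$. The lemma gives $\sigma_{A_{k,\mu}}(\omega_1,\ldots,\omega_\mu)=\sigma_{K_k}(\omega_1)$, and $\Delta_{A_{k,\mu}}\,\ddot=\,\Delta_{K_k}(t_1)$, which is non trivial.

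To define the homomorphism, let $A\in\mathcal{M}_\mu^\ast$ and fix $\omega_2=\cdots=\omega_\mu=-1$. The polynomial $\Delta_A(t_1,-1,\ldots,-1)$ may vanish identically, so instead I will use the restriction to a generic line. Write
\[
J_k(A)=\tfrac12\lim_{\eta\to0^+}\bigl(\sigma_A(\omega^{(k)}e^{i\eta},\omega')-\sigma_A(\omega^{(k)}e^{-i\eta},\omega')\bigr),
\]
where $\omega'=(\omega_2,\ldots,\omega_\mu)$ is chosen generically so that the slice polynomial $t_1\mapsto\Delta_A(t_1,\omega')$ is non zero. Only finitely many roots occur there, so the limit makes sense.

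The main obstacle is proving that $J_k$ is well defined on algebraic concordance classes. Proposition \ref{prop:SignatureAlgConcInvariant} only gives equality of signatures away from zeros of $\Delta_A$ and $\Delta_B$. Since zero sets are nowhere dense and $\sigma$ is invariant off them, signatures of algebraically concordant $A$ and $B$ agree on a dense open set. Hence the one-sided limits agree, provided I use the same generic $\omega'$, avoiding the zero sets of both polynomials' slices. I would make this precise by defining $J_k$ through the a.e.\ equality class of the signature function. Additivity of $J_k$ follows from Remark \ref{rk:Signature}(ii), and $J_k$ is integer valued because jumps of signatures of hermitian matrices are even.

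Set $J=(J_k)_k\colon\mathcal{C}_\mu^\alg\to\Z^\infty$. Because the signature function has finitely many discontinuities along the chosen slice, all but finitely many $J_k(A)$ vanish, so $J$ lands in the direct sum. By construction $J(A_{k,\mu})=\pm e_k$, so $J$ is surjective. Since $\Z^\infty$ is free, $J$ splits, and the images of the classes of the $A_{k,\mu}$ span a direct summand isomorphic to $\Z^\infty$.
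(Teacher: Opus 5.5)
\paragraph{Gap: $J_k$ is not well defined for $\mu\geq2$.} Your overall strategy, a surjective jump homomorphism $J\colon\mathcal{C}_\mu^\alg\to\bigoplus_k\Z$ that splits because the target is free, is sound. But as set up, $J_k$ is not a function of $A$. You choose the slice $\omega'$ depending on $A$ (and, when comparing $A$ with $B$, depending on the pair). The jump at $\omega^{(k)}$ along $t_1\mapsto(t_1,\omega')$ can genuinely change with $\omega'$, even on open sets of admissible $\omega'$.

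For example, take $\mu=2$. Let $B^-=B$, $B^+=B^{\T}$ be a Seifert pair for $K_k$, let $C^-=C$, $C^+=C^{\T}$ be one for the trefoil $T_{2,3}$, and set $A^{\varepsilon_1\varepsilon_2}:=B^{\varepsilon_1}\otimes C^{\varepsilon_2}$. Then:
\begin{itemize}
\item $(A^{\varepsilon})^{\T}=A^{-\varepsilon}$, so $A\in\mathcal{M}_2$;
\item $A(t)=B(t_1)\otimes C(t_2)$, so $\Delta_A\,\dot=\,\Delta_{K_k}(t_1)^2\Delta_{T_{2,3}}(t_2)^2\neq0$;
\item $H_A(\omega)=H_B(\omega_1)\otimes H_C(\omega_2)$, hence $\sigma_A(\omega_1,\omega_2)=\sigma_{K_k}(\omega_1)\,\sigma_{T_{2,3}}(\omega_2)$ off the zero set.
\end{itemize}
Along a slice with $\omega_2$ close to $1$ the jump at $\omega^{(k)}$ is $0$, while along $\omega_2=-1$ it is $\pm4$. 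Both slices have non-zero slice polynomial. So $J_k(A)$ is not determined by $A$, and neither invariance nor additivity makes sense as stated. Passing to the almost-everywhere class of $\sigma_A$ does not help, because the jump across the wall $\{\omega^{(k)}\}\times(S^1\setminus\{1\})^{\mu-1}$ need not be constant along the wall.

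\paragraph{Repair and comparison with the paper.} The missing idea is to fix one slice for all families at once. Choose $\omega'\in(S^1\setminus\{1\})^{\mu-1}$ with coordinates algebraically independent over $\Q$. This is possible, since you only need to avoid countably many measure-zero zero sets of non-zero integral Laurent polynomials. Then $\Delta_A(t_1,\omega')\neq0$ for every $A\in\mathcal{M}_\mu^\ast$, and your remaining steps go through on this single slice:
\begin{itemize}
\item Proposition~\ref{prop:SignatureAlgConcInvariant} gives $\sigma_A(\cdot,\omega')=\sigma_B(\cdot,\omega')$ off finitely many points;
\item additivity follows from $H_{A\oplus B}=H_A\oplus H_B$;
\item $J(A_{k,\mu})=\pm e_k$, since $\sigma_{A_{k,\mu}}$ does not depend on $\omega'$.
\end{itemize}
Note that the points $\omega^{(k)}$ are dictated by the knots rather than chosen first; for instance $\cos\theta_n=1-\frac{1}{2n}$ for $K(1,n,1)$.

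Once repaired, your route differs from the paper's. The paper works with $\tilde K(1,n,1)$, shows that the signature functions of the $(K_n)_\mu$ are linearly independent, and uses a point where $\sigma_{A_n}=1$ to see that each $[A_n]$ is indivisible. Your retraction $J$ produces the splitting directly, which is exactly what a direct summand requires. Indivisibility of each generator taken separately does not by itself give this.
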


\begin{proof}
Let us write $K_n = \tilde K(1,n,1)$. A direct computation using the matrices or Example \ref{example:K(a,b,c)} shows that the signatures of the $1$-colored links $K_n$ are linearly independent. More precisely, let $T = S^1\setminus(\{1\}\cup\{\omega_1 : \Delta_{K_n}(\omega_1) = 0 \text{ for some } n\geq 1\})$ be the set of modulus one complex numbers different from one, and where all roots of all Alexander polynomials of the $K_n$ have been removed. Note that this set is dense in $S^1$. Then the maps
$$\sigma_{K_n} \colon T \to \Z,$$
are linearly independent elements of the $\Z$-module of maps from $T$ to $\Z$.

By Lemma \ref{lemma:FakeLink}, the $\mu$-colored link $(K_n)_\mu$ has Alexander polynomial with the same roots as $\Delta_{K_n}(t_1)$ in $(S^1\setminus\{1\})^\mu$. Thus, the set $T \times (S^1\setminus \{1\})^{\mu-1}$ contains no root of any of the Alexander polynomials of the $(K_n)_\mu$. By Proposition \ref{prop:SignatureAlgConcInvariant}, we get maps
$$\sigma_{(K_n)_\mu} = \sigma_{A_n} \colon T \times (S^1\setminus \{1\})^{\mu-1} \to \Z,$$
which only depend on the algebraic concordance class of a family of generalized Seifert matrices $A_n$ for $(K_n)_\mu$. Moreover, these maps are linearly independent. Indeed, by Lemma \ref{lemma:FakeLink}, their restriction to $T$ coincide with the maps $\sigma_{K_n}$, which are the linearly independent signatures of the links $K_n$. Furthermore, as the signature is additive (see Remark \ref{rk:Signature} (ii)), the linear independence of the maps $\sigma_{A_n}$ implies that the classes $[A_n]$ are linearly independent over $\Z$. Thus, they generate a subgroup of $\mathcal{C}_\mu^{\alg}$ isomorphic to~$\Z^\infty$. 

It remains to show this is a summand. For this, a direct computation using the matrix from Example \ref{example:K(a,b,c)} shows that for all $n$, there exists an $\omega \in (S^1\setminus \{1\})^{\mu}$ such that $\sigma_{A_n}(\omega) = 1$ and $\Delta_{A_n}(\omega) \neq 0$ (this is our motivation for the consideration of $\tilde K(1,n,1)$ rather than $K(1,n,1)$). Thus, if $[A_n]$ decomposes as 
$$[A_n] = [B] \oplus \cdots \oplus [B] = m [B]$$
for a certain family of matrices $B$, we have
$$1 = \sigma_{A_n}(\omega) = m \sigma_B(\omega),$$
and it follows that $m = 1$.
Therefore $[A_n]$ generates a $\Z$-summand of $\mathcal{C}_\mu^{\alg}$ for all $n$. Hence, the subgroup generated by all classes $[A_n]$ is a $\Z^\infty$-summand of $\mathcal{C}_\mu^{\alg}$.
\end{proof}

Now that the existence of a summand of $\mathcal{C}_\mu^\alg$ has been established, we move to the second step of the proof of Proposition \ref{prop:StructureAlgConcGroup}, which consists in showing the existence of a $\Z_2^\infty$ summand via the Fox-Milnor condition. To do so, we copy the classical strategy from the knot case which is to find an infinite family of non algebraically slice, amphichiral knots, which are distinguished by the Fox-Milnor condition. Via a construction similar as for the signature, we get the following result.

\begin{lemma}
\label{lemma:Z2Summand}
For all $\mu\geq 1$, the group $\mathcal{C}_\mu^{\alg}$ admits a summand isomorphic to $\Z_2^\infty$.
\end{lemma}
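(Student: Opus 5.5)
We follow the classical knot strategy. Choose an infinite family of amphichiral knots $J_p$ whose Alexander polynomials are pairwise distinct irreducible symmetric polynomials, not of the form $h(t)h(t^{-1})$ even up to units. The standard choice, e.g. from \cite{LivingstonKnotConcordance}, is figure-eight-type twist knots with $\Delta_{J_p}(t) = pt^2-(2p+1)t+p$, or a subfamily of them with these polynomials irreducible and pairwise coprime. Since $J_p$ is amphichiral, $J_p \# J_p$ is concordant to $J_p\#\overline{-J_p}$, which is slice. At the matrix level, a Seifert matrix $A_p$ of $J_p$ satisfies that $A_p\oplus A_p$ is metabolic, because $A_p$ is congruent to $-A_p$ up to classical S-equivalence/Witt equivalence. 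Via Lemma~\ref{lemma:FakeLink} we then pass to the families $(A_p)_\mu = \{A_p,\ldots,A_p,A_p^{\T},\ldots,A_p^{\T}\}$. A congruence $CA_pC^{\T} = -A_p$ (or the classical Witt statement) is independent of $\varepsilon$, so it transports to the family. Hence $2[(A_p)_\mu]=0$ in $\mathcal{C}_\mu^\alg$.

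Next we prove independence. Suppose $\sum_{p\in P}[(A_p)_\mu] = 0$ for some finite nonempty set $P$. The family $\bigoplus_{p\in P}(A_p)_\mu$ is then algebraically concordant to the empty family. By Proposition~\ref{prop:FoxMilnor} and Lemma~\ref{lemma:FakeLink}, together with multiplicativity (Remark~\ref{rk:APnon0Knots}), we get $\prod_{p\in P}\Delta_{J_p}(t_1)\,\ddot=\,h(t)h(t^{-1})$ in $\Lambda_S$. Factoring in the UFD $\Lambda_S$, each $\Delta_{J_p}(t_1)$ is irreducible and symmetric, and it remains irreducible and is not a unit, since it does not vanish at $t_1=1$. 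Each therefore appears to exponent one on the left. On the right, any irreducible $q$ occurs together with $\overline q$, and a symmetric factor $q\,\dot=\,\overline q$ occurs to even exponent. This contradicts the odd exponent, so the classes $[(A_p)_\mu]$ span a subgroup isomorphic to $\Z_2^\infty$.

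The summand property is the delicate part. I would use the invariant $\Phi$ defined as follows. For each $p$, $\Phi$ records the parity of the exponent of the irreducible factor $\Delta_{J_p}(t_1)$ in $\Delta_A$, computed in $\Lambda_S$ modulo units. By Lemma~\ref{lemma:APInvariance} this is invariant under generalized S-equivalence, and the transitivity argument in Proposition~\ref{prop:FoxMilnor} shows it is invariant under Witt equivalence; the argument needs $\Delta_{J_p}$ to be symmetric irreducible. It is additive by multiplicativity. This gives a homomorphism $\Phi\colon\mathcal{C}_\mu^\alg\to\bigoplus_p\Z_2$ sending $[(A_p)_\mu]$ to the $p$-th basis vector. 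The restriction of $\Phi$ to the subgroup generated by the $[(A_p)_\mu]$ is therefore an isomorphism, which splits the subgroup off as a summand.

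The main obstacles are to check carefully that the $\Delta_{J_p}$ stay irreducible and non-associate in $\Lambda_S$ in several variables, and that symmetric irreducibles force even exponents. The latter requires ruling out $q\,\dot=\,\overline q$ being realized in a product of the form $h\overline h$ with odd exponent, which follows by comparing exponents of $q$ in $h$ and $\overline h$.
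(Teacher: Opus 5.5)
There is one concrete error: the family you name does not have the property you need. The twist knots with $\Delta(t)=pt^2-(2p+1)t+p$ are not amphichiral for $p>1$; among nontrivial twist knots only the figure-eight is. So your justification of $2[A_p]=0$ does not apply, and for some $p$ the conclusion is actually false. The $\Delta$-primary part of such a knot's rational algebraic concordance class is a rank-one Hermitian form over $\Q(\sqrt{4p+1})$. This form has order $2$ only if $-1$ is a norm from $\Q(\sqrt{4p+1})$, i.e.\ only if $4p+1$ is a sum of two squares. For $p=5$ ($4p+1=21$) the knot has order $4$. Restricting to a subfamily with irreducible, pairwise coprime polynomials does not fix this.

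The repair is to use the knots $K(n,-n,1)$ of Example~\ref{example:K(a,b,c)}, with Seifert matrix $\bsm n&1\\0&-n\esm$, exactly as the paper does. They are amphichiral, and their polynomial $n^2t^2-(2n^2+1)t+n^2$ is your formula with $p=n^2$. It is irreducible for every $n\geq 1$, since the discriminant $4n^2+1$ lies strictly between $(2n)^2$ and $(2n+1)^2$. When passing to families, what you really use is that $A\oplus A$ is classically metabolic. The same $\varepsilon$-independent congruence then makes the transposes metabolic as well, which is all that $\{A,\ldots,A,A^{\T},\ldots,A^{\T}\}$ requires.

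With this repair your argument is correct. For the splitting it takes a genuinely different and more complete route than the paper.

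\textbf{The paper's route.} It gets $2[A_n]=0$ from amphichirality and uses Proposition~\ref{prop:FoxMilnor} for nontriviality and independence. It then deduces the summand property from the claim that, since $\Delta_{A_n}$ is irreducible, $[A_n]$ is not of the form $m[B]$ with $m>1$. As stated this is inaccurate: $[A_n]=3[A_n]$. The relevant property is that no nonzero element of the span lies in $2\,\mathcal{C}_\mu^\alg$. Passing from ``each $\langle[A_n]\rangle$ is a summand and the $[A_n]$ are independent'' to ``their span is a summand'' needs exactly this purity of the whole span.

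\textbf{Your route.} The parity homomorphism $\Phi\colon\mathcal{C}_\mu^\alg\to\bigoplus_p\Z_2$ supplies this purity. It records the parity of the $q_p$-adic valuation $v_{q_p}$ of $\Delta_A$ in the UFD $\Lambda_S$.
\begin{enumerate}[(i)]
\item It is well defined by Lemma~\ref{lemma:APInvariance} and the statement of Proposition~\ref{prop:FoxMilnor}. For symmetric irreducible $q$ one has $v_q(h\bar h)=v_q(h)+v_{\bar q}(h)=2v_q(h)$.
\item It is additive by multiplicativity of $\Delta$.
\item $(\Phi|_G)^{-1}\circ\Phi$ is an explicit retraction onto the span $G$ of the $[(A_p)_\mu]$.
\end{enumerate}
This also makes your separate independence paragraph redundant.

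One small citation point: Witt invariance of $\Phi$ comes from the metabolic computation $\Delta_A\Delta_B\,\dot=\,\det C(t)\det C(t^{-1})$, or directly from the statement of Proposition~\ref{prop:FoxMilnor}. It does not come from the transitivity argument.
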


\begin{proof}
From the family of knots $K_n := K(n, -n, 1)$ for $n\geq 0$, we build a family of colored links $(K_n)_\mu$ as in Figure \ref{fig:FakeLink}. By Example \ref{example:K(a,b,c)}, the knot $K_n$ admits the Seifert matrix
$$A = \begin{pmatrix} n & 1 \\ 0 & -n \end{pmatrix}.$$
Thus, by Lemma \ref{lemma:FakeLink} the link $(K_n)_\mu$ admits the family of generalized Seifert matrices $A_n := \{A, \ldots, A, A^{\T}, \ldots, A^{\T}\}$. As $K_n$ is amphichiral, $A$ is of order at most two in the classical algebraic concordance group and thus $A_n$ is of order at most two in $\mathcal{C}_\mu^{\alg}$. Moreover, Lemma \ref{lemma:FakeLink} and a direct computation using the matrix $A$ show that 
$$\Delta_{A_n}(t_1, \ldots, t_\mu) \,\ddot=\, \Delta_{K_n}(t_1) = n^2t_1^2-(2n^2+1)t_1+n^2.$$
It is easy to check that for all $n \geq 1$ this degree two polynomial is irreducible in $\Lambda_S$. Thus, by Proposition \ref{prop:FoxMilnor}, the class $[A_n]$ is non trivial in $\mathcal{C}_\mu^\alg$. Therefore, each class $[A_n]$  generates a subgroup of $\mathcal{C}_\mu^\alg$ isomorphic to $\Z_2$.

We now use the Fox-Milnor condition to show that these $\Z_2$-subgroups in fact form a $\Z_2^\infty$ summand of $\mathcal{C}_\mu^\alg$. First, as the Alexander polynomial $\Delta_{A_n}(t)$ is irreducible, the class $[A_n]$ cannot be expressed as $m [B]$ for some integer $m >1$ and $B \in \mathcal{M}_\mu^\ast$. It follows that each $\Z_2$-subgroup of $\mathcal{C}_\mu^\alg$ generated by a class $[A_n]$ is a summand. Moreover, as these Alexander polynomials are all irreducible and distinct, the multiplicativity of the Alexander polynomial (see Remark \ref{rk:APnon0Knots} (i)) implies that the classes $[A_n]$ are linearly independent over $\Z_2$. Therefore, they generate a summand of $\mathcal{C}_\mu^\alg$ isomorphic to $\Z_2^{\infty}$. 
\end{proof}

\begin{proof}[Proof of Theorem \ref{prop:StructureAlgConcGroup}]
This is an immediate consequence of Lemma \ref{lemma:ZinfSummand} and Lemma \ref{lemma:Z2Summand}.
\end{proof}

\subsection{Invariants from Witt groups of finite fields}
\label{sub:WittFinite}
In the classical case of knots, Levine \cite{LevInvKnotCob} showed that the algebraic concordance group contains a further $\Z_4^\infty$ summand, by considering invariants coming from Witt group of finite fields. In this section, we give a reason why the extension of these invariants to $\mathcal{C}_\mu^\alg$ does not seem straightforward. This is an obstacle to showing the existence of a $\Z^\infty_4$ summand of $\mathcal{C}_\mu^\alg$.

In the classical case of knots, these invariants are defined as follows. Start with a Seifert matrix $A$ for a knot, and consider the non singular symmetric form $A + A^{\T}$. This defines an element in the Witt group $W(\Q)$, and thus defines a homomorphism $h$ from the classical algebraic concordance group into $W(\Q)$. Then for any prime number $p$, there are well defined map $\partial_p \colon W(\Q) \to W(\mathbb{F}_p)$, and the latter Witt group of a finite field is isomorphic either to $\Z_2$ or $\Z_4$ depending on the prime number $p$. The composition $\partial_p \circ h$ then produces a $\Z_2$ or $\Z_4$ invariant of the classical algebraic concordance group.

Extending this construction to $\mathcal{C}_\mu^\alg$ does not seem straightforward, as the sum of generalized Seifert matrices (even considered with some signs) is not necessarily non singular. Hence, one does in general not get an element in $W(\Q)$. Furthermore, even in the case where one would get an element in $W(\Q)$, one would in general not obtain a well defined map $\mathcal{C}_\mu \to W(\Q)$. Indeed, the Witt class might depend on the choice of the representative of the generalized S-equivalence class. This issue does not appear in the case of knots, as in this classical case S-equivalence implies Witt equivalence.

\section{Questions}
\label{sec:Questions}
We conclude this work by gathering a few questions.\medskip

Our first question concerns the presence of the \emph{smooth} hypothesis in Theorem \ref{prop:IntroAlgConcLinks}. This hypothesis might be unsettling for the reader used to Seifert forms and abelian invariants of links. Indeed, Seifert matrices are classically considered in the topological category, yielding topological invariants. For instance, it is known that the signature of colored links is a topological concordance invariant \cite{C-F08, CNT}, as well as the Fox-Milnor condition \cite{Kawauchi} and the Witt class of the Blanchfield pairing, see for example \cite{hillman2012}. Moreover, in the classical case of knots, the analogue of Theorem \ref{prop:IntroAlgConcLinks} holds in the topological category.  Thus, we ask the following question.

\begin{question}
Do topologically concordant links have algebraically concordant generalized Seifert matrices ?
\end{question}

Our second question is about which elements of the groups $\mathcal{C}_\mu^\alg$ come from generalized Seifert matrices. In the classical case, it is well known that an integral matrix $A$ is a Seifert matrix of a knot if and only if $\det(A-A^{\T}) = 1$. It is therefore very natural to ask whether such a criterion exists for families of generalized Seifert matrices. Our attempts have so far been unfruitful, and we are left with the following question.

\begin{question}
Which families of matrices are realizable as generalized Seifert matrices of links with non trivial Alexander polynomial ?
\end{question}

Our last question was already briefly discussed in Section \ref{sub:WittFinite}, which explain its main motivation. We recall it here.

\begin{question}
\label{question:Z4}
Does the group $\mathcal{C}_\mu^\alg$ admit a $\Z_4^\infty$ summand ?
\end{question}

\bibliographystyle{annotate}
\bibliography{BlanchfieldBib}

\end{document}